\def\R{{\mathbb{R}}}
\def\Z{{\mathbb{Z}}}
\newtheorem{theorem}{Theorem}[section]
\newtheorem{corollary}[theorem]{Corollary}
\newtheorem{lemma}[theorem]{Lemma}
\newtheorem{proposition}[theorem]{Proposition}
\theoremstyle{definition}
\newtheorem{definition}[theorem]{Definition}
\newtheorem{remark}[theorem]{Remark}
\numberwithin{equation}{section}
\begin{document}

\title[Indistinguishability of unbounded components in Boolean models]{
Indistinguishability of unbounded components in the occupied and vacant sets of Boolean models on symmetric spaces}
\author{Yingxin Mu}
\address{
  Yingxin Mu,
  University of Leipzig, Institute of Mathematics,
  Augustusplatz 10, 04109 Leipzig, Germany.
}
\email{yingxin.mu@uni-leipzig.de}

\author{Artem Sapozhnikov}
\address{
  Artem Sapozhnikov,
  University of Leipzig, Institute of Mathematics,
  Augustusplatz 10, 04109 Leipzig, Germany.
}
\email{artem.sapozhnikov@math.uni-leipzig.de}

\begin{abstract}
We study Boolean models on Riemannian symmetric spaces driven by homogeneous insertion- or deletion-tolerant point processes. We prove that in both the set covered by the balls (the occupied set) and its complement (the vacant set), one cannot distinguish unbounded components from each other by any isometry invariant component property. This implies the uniqueness monotonicity for the occupied and vacant sets of Poisson-Boolean models and an equivalence of non-uniqueness to the decay of connectivity for both sets. These results are continuum analogues of those by Lyons and Schramm \cite{LS-Indistinguishability}. 
However, unlike the proof of the indistinguishability in \cite{LS-Indistinguishability}, our proof does not rely on transience of unbounded components. 
We also prove the existence of a percolation phase transition for independent Poisson-Boolean model on unbounded connected components of both occupied and vacant sets and show transience of a random walk on the occupied set. Apart from some technical differences, we treat the occupied and the vacant sets of Boolean models within a single framework. 
\end{abstract}

 
\maketitle

\section{Introduction}

The \emph{Boolean model} in a metric space $\mathsf X$ is an ensemble of closed balls with independent identically distributed random radii centered at the points of a point process. It is a fundamental model in percolation theory, stochastic geometry, material sciences and telecommunications, see e.g.\ \cite{DousseThiranHasler,Hermann,HugLastWeil, JahnelKoenig,MeesterRoy}. We refer to \cite{MeesterRoy} for a mathematical introduction to the Boolean model. 
Each Boolean model induces a partition of $\mathsf X$ into the \emph{occuped set}, the subset of $\mathsf X$ covered by the balls, and the \emph{vacant set}. 
Connectivity properties of the occupied and vacant sets of Boolean models driven by Poisson point processes in Euclidean space $\R^d$ have been extensively studied. In particular, it is known when both sets undergo a non-trivial percolation phase transition in the intensity of the Poisson process (see e.g.\ \cite{Gouere08, ATT18, Pen18}) and that each set contains at most one unbounded connected component \cite{MR-Uniqueness}. 

\smallskip

The study of connectivity properties in the Poisson-Boolean model on more exotic spaces than $\R^d$ was inititated by Tykesson in \cite{Tykesson-H2, Tykesson-MofU}. 
He proved the existence of a percolation phase transition in the Poisson-Boolean model with constant radii on the hyperbolic space $\mathbb H^d$ and showed the existence of 
a regime with infinitely many unbounded components.
Thus, the following question is natural to ask:
can the unbounded connected components in the occupied (resp.\ the vacant) set be very different from each other? For example, can unbounded components with different volume growth or different densities coexist? 
In a discrete setting of percolation on graphs, Lyons and Schramm \cite{LS-Indistinguishability} proved a remarkable result that one cannot distinguish infinite clusters from each other by any invariant property.
One of the main results of our paper is an analogue of the Lyons-Schramm theorem 
for the occupied and vacant sets in a class of Boolean models.

\smallskip

Let us first introduce the model. 
A simply connected and complete Riemannian manifold $\mathsf X$ is called a \emph{(Riemannian)  symmetric space} if for every $x\in\mathsf X$, the map that reverses geodesics passing through $x$ is an isometry of $\mathsf X$; see e.g.\ \cite{jost}. 
Euclidean and hyperbolic spaces are examples of symmetric spaces, products of symmetric spaces are symmetric. 
Any symmetric space is homogeneous, that is, its isometry group acts transitively on it. 
Throughout this paper, $\mathsf X$ is a non-compact symmetric space with volume measure $\mu_\mathsf X$ and a marked origin $0\in\mathsf X$. 

\smallskip

We denote by $\mathsf M(\mathsf X)$ the space of simple counting measures on $\mathsf X$. 
A random point measure $\omega$ on $\mathsf M(\mathsf X)$ is \emph{insertion-tolerant} if for all bounded $B\in\mathscr B(\mathsf X)$, the law of $\omega+\delta_X$ is absolutely continuous with respect to the law of $\omega$, where $X$ is independent from $\omega$ and uniformly distributed in $B$. A point measure $\omega$ is \emph{deletion-tolerant} if for all bounded $B\in\mathscr B(\mathsf X)$, the law of $\omega|_{B^c}$ is absolutely continuous with respect to the law of $\omega$. 
If a random point measure is insertion- resp.\ deletion-tolerant, then we also call its law insertion- resp.\ deletion-tolerant. 
These definitions of insertion- and deletion-tolerant point measures were proposed by Holryod and Soo in \cite{HolroydSoo}.\footnote{Although the paper is written in the setting of $\R^d$, the definitions and results extend to symmetric spaces, see \cite[Remark~1]{HolroydSoo}.} 
The Poisson point measure on $\mathsf X$ with intensity $\lambda\mu_\mathsf X$ is a natural example of insertion- and deletion-tolerant point measure. Another interesting example is the Gaussian zero process in the hyperbolic plane, which is also both insertion- and deletion-tolerant, see \cite{PeresVirag,HKPV-ZerosOfGAF} and \cite[Proposition 13]{HolroydSoo}.

\smallskip

In this paper, we study Boolean models on symmetric spaces driven by homogeneous insertion- or deletion-tolerant point processes. 
While all our results hold for Boolean models with general i.i.d.\ random radii, for clarity, we restrict the presentation to the case when all the balls have the same unit radius. 
In Section~\ref{sec:random-radii}, we state the results in the general case (Theorem~\ref{thm:main-iid}) and show where and how to adjust the proofs for the constant radii so that they work for general i.i.d.\ random radii.
Each $\omega\in \mathsf M(\mathsf X)$ induces the closed subset $\mathcal O$ of $\mathsf X$, defined by
\[
\mathcal O = \mathcal O(\omega) = \bigcup\limits_{x\in\mathrm{supp}(\omega)}\mathsf B(x,1),
\]
called the \emph{occupied set}. Its complement $\mathcal V = \mathsf X\setminus \mathcal O$ is called the \emph{vacant set}. 
In this paper, we are interested in properties of unbounded connected components of the occupied and the vacant sets. The number of unbounded components can be either $0$, $1$ or infinity (see Lemma~\ref{l:number-of-infinite-clusters-occupied}). Our main results concern the regime when the number of unbounded components is infinite. 
Let us also point out that apart from some technical differences, we treat the occupied and the vacant sets within a single framework. 

\smallskip

We now describe our results. 
A set $\mathcal A\in \mathscr B(\mathsf X)\otimes \mathscr M(\mathsf X)$ is called \emph{occupied component property} if $(x,\omega)\in\mathcal A$ implies that $(x',\omega)\in\mathcal A$ for all $x'$ connected to $x$ in $\mathcal O(\omega)$ 
and it is called \emph{vacant component property} if $(x,\omega)\in\mathcal A$ implies that $(x',\omega)\in\mathcal A$ for all $x'$ connected to $x$ in $\mathcal V(\omega)$. 
A component property $\mathcal A$ is isometry invariant, if $(x,\omega)\in\mathcal A$ implies that $(\gamma x,\gamma\omega)\in\mathcal A$ for all isometries $\gamma$ of $\mathsf X$. Here are some examples of isometry invariant vacant component properties: $(x,\omega)\in\mathcal A_1$ if the connected component of $x$ in $\mathcal V(\omega)$ has infinite volume, $(x,\omega)\in\mathcal A_2$ if the density of the connected component of $x$ in $\mathcal V(\omega)$ is zero. 

\begin{theorem}[Indistinguishability of unbounded components]\label{thm:Indistinguishability}
Let $\mathbf P$ be an isometry invariant probability measure on $\mathsf M(\mathsf X)$. 
\begin{itemize}\itemsep4pt
\item[(a)]
Let $\mathcal A$ be an isometry invariant occupied component property. 
If $\mathbf P$ is insertion-tolerant, then $\mathbf P$-almost surely, either all unbounded occupied components have property $\mathcal A$ or none of them do.
\item[(b)]
Let $\mathcal A$ be an isometry invariant vacant component property.
If $\mathbf P$ is deletion-tolerant and 
the expected number of connected components in $\mathcal V\cap\mathsf B(0,1)$ is finite, then $\mathbf P$-almost surely, either all unbounded vacant components have property $\mathcal A$ or none of them do.
\end{itemize}
\end{theorem}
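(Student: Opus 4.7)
My plan is to follow the Lyons--Schramm contradiction scheme, but to replace the random-walk/transience pivoting of \cite{LS-Indistinguishability} by a direct finite-energy pivot combined with the mass transport principle on $\mathsf{X}$. As a first step, I would reduce to the case of an ergodic $\mathbf{P}$: by the ergodic decomposition with respect to the isometry group $\mathrm{Isom}(\mathsf{X})$, the components of $\mathbf{P}$ inherit insertion- or deletion-tolerance and the finite-components hypothesis, so it suffices to prove the dichotomy for each ergodic component. For part (a) I would then suppose, for contradiction, that under an ergodic insertion-tolerant $\mathbf{P}$ both unbounded $\mathcal{A}$-components and unbounded non-$\mathcal{A}$-components exist with probability one.

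The core is a pivot construction. Using the homogeneity of $\mathsf{X}$, fix $R>0$ such that with positive probability the ball $\mathsf{B}(0,R)$ meets both types of unbounded components. On this event one designates, in an isometry-equivariant way, a finite configuration of ``pivot points'' inside $\mathsf{B}(0,R)$ whose insertion merges one $\mathcal{A}$-component with one non-$\mathcal{A}$-component into a single cluster. Insertion-tolerance makes the merged configuration have positive probability, and since $\mathcal{A}$ is a component property, the merged cluster must carry a single well-defined type, so one of the two original types must be destroyed in the merge. Setting up the mass transport principle
\[
\mathbf{E}\int_{\mathsf{X}} f(0,y,\omega)\,d\mu_{\mathsf X}(y) \;=\; \mathbf{E}\int_{\mathsf{X}} f(x,0,\omega)\,d\mu_{\mathsf X}(x)
\]
with an isometry-equivariant kernel $f$ charging pivot pairs, and comparing the mass sent by $\mathcal{A}$-clusters with the mass received by non-$\mathcal{A}$-clusters, gives the contradiction. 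Part (b) is dual: deletion-tolerance is used to open a vacant corridor between two vacant components of different types by removing a finite chain of points of $\omega$, and the hypothesis $\mathbf{E}[\#\{\text{components of }\mathcal{V}\cap\mathsf{B}(0,1)\}]<\infty$ is used precisely to make the mass transport kernel integrable in this setting.

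The main obstacle is the simultaneous construction of an isometry-equivariant pivot rule that is (i) supported on finitely many points, (ii) has positive conditional probability of effecting the desired merge under insertion (resp.\ opening under deletion) tolerance, and (iii) yields a finite total mass transport. Without transience, I cannot rely on an ``at infinity'' pivot supplied by the random walk on the cluster, and must instead use only the ambient geometry of the symmetric space together with the component-property rigidity of $\mathcal{A}$ to propagate the contradiction globally. The vacant version is the most delicate step: deletion can create many small new vacant components, and it is exactly the finiteness hypothesis that prevents this proliferation from breaking integrability in the mass-transport identity.
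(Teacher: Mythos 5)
Your proposal correctly identifies the opening move of the argument --- using insertion- (resp.\ deletion-) tolerance to produce, with positive probability, a local modification that merges two unbounded components of opposite types and hence flips the type of one of them. This is precisely the content of the paper's Lemma~\ref{l:pivotals} (existence of pivotal sets). But that is only the starting point, and the step where you claim the contradiction --- ``comparing the mass sent by $\mathcal A$-clusters with the mass received by non-$\mathcal A$-clusters'' via a mass-transport kernel charging pivot pairs --- is not an argument. A single pivotal modification near the origin contradicts nothing: both before and after the merge you simply have two configurations of positive probability in each of which every component carries a well-defined type. There is no quantity that is finite on one side of the mass-transport identity and infinite on the other, and you have not specified one. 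The tension that actually produces the contradiction in every known proof is between (i) the fact that the event ``$C(0)$ is unbounded of type $\mathcal A$'' is approximable by \emph{local} events, and (ii) the existence of pivotal modifications \emph{arbitrarily far from the origin} that still flip the type of $C(0)$. Exploiting (ii) requires a device that transports the pivotal location to infinity along the cluster; the mass-transport principle by itself supplies no such device. You explicitly flag this as ``the main obstacle'' and then do not resolve it, so the proof has a genuine gap at its core.

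The paper's resolution is worth contrasting with what you attempted. Like you, the authors want to avoid proving transience of the unbounded components themselves (which is how Lyons--Schramm obtain their ``pivotal at infinity''). Their substitute is not the mass-transport principle alone but a \emph{reversible random walk on the forest of $r$-trifurcations} $F_\mathcal S$: since every vertex of this forest has degree at least $3$, the walk is transient for free (Corollary~\ref{cor:graph-transience}), and a stationarity-of-the-environment statement (Proposition~\ref{prop:rw-stationarity}, itself proved by mass transport) shows that the events $\mathcal G_n$ --- ``$C(w(n))$ is unbounded of type $\mathcal A$ and admits a pivotal ball within distance $R$ of $w(n)$, positioned so as not to disturb the walk's past'' --- have constant positive probability. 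Transience then forces the pivotal ball out to infinity, where performing the pivotal modification changes the type of $C(0)$ without affecting the local approximation $\mathcal A_0'$, and this is the contradiction. Your instinct that the finiteness hypothesis in (b) feeds a mass-transport integrability condition is correct in spirit, but in the paper it enters through Lemmas~\ref{l:tree-of-trifurcations} and~\ref{l:graph-degrees} (well-definedness and local finiteness of the trifurcation forest), not through a pivot-pair kernel. Finally, your reduction to the ergodic case via ergodic decomposition is not what the paper does (it conditions on the invariant event $\{N=\infty\}$ using Lemma~\ref{l:insertion-tolerance-for-conditional}) and would itself require an argument that ergodic components inherit insertion-tolerance; but this is a side issue compared to the missing main mechanism.
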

Theorem~\ref{thm:Indistinguishability} is a continuum analogue of the indistinguishability theorem of Lyons and Schramm \cite{LS-Indistinguishability}. Several other extensions and generalizations of their result were obtained in the settings of Bernoulli percolation \cite{AldousLyons,Martineau,Tang} and uniform spanning forests \cite{HutchcroftNachmias,Hutchcroft,Timar}. 

\smallskip

If $\mathsf X$ is a hyperbolic space, then it is possible to replace the assumption on the number of vacant components in a ball in Theorem~\ref{thm:Indistinguishability} by a more natural moment assumption on the number of points of the point process in a ball. 

\begin{proposition}\label{prop:Indistinguishability-hyperbolic}
If $\mathsf X$ is the $d$-hyperbolic space $\mathbb H^d$ and 
$\mathbf E\big[\omega(\mathsf B(0,2))^d\big]<\infty$, then the expected number of 
connected components in $\mathcal V\cap\mathsf B(0,1)$ is finite. 
\end{proposition}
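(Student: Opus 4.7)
The plan is to reduce the problem to a classical bound on arrangements of Euclidean balls. First observe that $\mathcal V \cap \mathsf B(0,1)$ depends only on $\omega|_{\mathsf B(0,2)}$, since a ball $\mathsf B(x,1)$ can touch $\mathsf B(0,1)$ only when $x \in \mathsf B(0,2)$. Setting $N := \omega(\mathsf B(0,2))$, it therefore suffices to show that for any choice of centers $x_1,\dots,x_N \in \mathsf B(0,2)$,
\[
\#\{\text{connected components of } \mathsf B(0,1) \setminus \textstyle\bigcup_{i=1}^N \mathsf B(x_i,1)\} \leq C(d)(1+N)^d
\]
for some constant $C(d)$; the moment hypothesis $\mathbf E[\omega(\mathsf B(0,2))^d]<\infty$ then delivers the claim after taking expectations.

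The key geometric step is to pass to the Poincar\'e ball model of $\mathbb H^d$. In this model every hyperbolic geodesic ball is, as a subset of the ambient Euclidean ball, itself a Euclidean ball (although in general not centered at the Euclidean image of the hyperbolic center). Consequently the hyperbolic arrangement $\{\mathsf B(0,1),\mathsf B(x_1,1),\dots,\mathsf B(x_N,1)\}$ transforms into an arrangement of $N+1$ Euclidean balls $E_0,E_1,\dots,E_N$ in $\R^d$, and the connected components of $\mathsf B(0,1)\setminus \bigcup_{i=1}^N \mathsf B(x_i,1)$ biject with those of $E_0\setminus \bigcup_{i=1}^N E_i$.

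Next I would invoke the classical bound that an arrangement of $n$ spheres in $\R^d$ partitions space into at most $O(n^d)$ connected cells, provable by induction on $d$: the $n$-th sphere, a $(d-1)$-sphere, is cut by the preceding $n-1$ spheres into at most $O(n^{d-1})$ pieces (by the inductive hypothesis applied within that sphere), and each such piece splits at most one existing cell of the ambient arrangement into two; summing from $1$ to $n$ yields $O(n^d)$. Degenerate configurations only decrease the count, so applying the bound with $n=N+1$ gives the required estimate. Expanding $(1+N)^d$ as a polynomial in $N$ of degree $d$ and combining with $\mathbf E[\omega(\mathsf B(0,2))^d]<\infty$ (using Jensen's inequality for the lower-order moments) then bounds every term, yielding the desired finite expectation.

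The main obstacle is really only the geometric reduction, namely verifying that hyperbolic geodesic balls correspond to Euclidean balls under the Poincar\'e identification. This is a classical fact from hyperbolic geometry but must be cited precisely. An alternative route would avoid the Poincar\'e model altogether: in the hyperboloid model, each hyperbolic ball is the intersection of the hyperboloid with an affine half-space in Minkowski space $\R^{d+1}$, so the problem reduces to counting connected components of a semialgebraic set defined by $N+2$ polynomials of degree at most $2$; the Milnor--Thom bound then yields $O(N^{d+1})$ components, but this weaker estimate would require the stronger moment assumption $\mathbf E[\omega(\mathsf B(0,2))^{d+1}]<\infty$, which is why the Poincar\'e approach is preferable.
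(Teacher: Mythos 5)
Your proposal is correct and follows essentially the same route as the paper: reduce to the balls centered in $\mathsf B(0,2)$, pass to the Poincar\'e ball model (where hyperbolic balls are Euclidean balls, cited in the paper from Ratcliffe, Theorem~4.5.4), and then apply an $O(k^d)$ bound on the number of components of the complement of $k$ Euclidean balls. The only difference is that the paper simply cites \cite[Proposition~5.4]{MeesterRoy} for that Euclidean counting bound, whereas you sketch a proof of it via the standard sphere-arrangement induction.
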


The moment assumption of Proposition~\ref{prop:Indistinguishability-hyperbolic} is obviously satisfied by any homogeneous Poisson point process on $\mathbb H^d$, but it is also satisfied by the Gaussian zero process in the hyperbolic plane, see \cite[Theorem~3.2.1]{HKPV-ZerosOfGAF}.

\smallskip

In their paper, Lyons and Schramm discussed several implications of the cluster indistinguishability. Similar implications can be derived in our setting. For a subset $\mathcal S$ of $\mathsf X$, we denote by $N_\mathcal S$ the number of unbounded connected components in $\mathcal S$. 

\begin{theorem}[Uniqueness monotonicity]\label{thm:monotonicity-of-uniqueness}
Let $\mathbf P_\lambda$ be the law of a Poisson point measure on $\mathsf X$ with intensity $\lambda\mu_\mathsf X$. Let $\lambda_1<\lambda_2$. 
\begin{itemize}\itemsep4pt
 \item[(a)]
 If $\mathbf P_{\lambda_1}[N_\mathcal O = 1]=1$ then $\mathbf P_{\lambda_2}[N_\mathcal O=1]=1$. 
 \item[(b)]
 If the expected number of connected components in $\mathcal V\cap\mathsf B(0,1)$ is finite $\mathbf P_{\lambda_1}$-almost surely, then $\mathbf P_{\lambda_2}[N_\mathcal V=1]=1$ implies $\mathbf P_{\lambda_1}[N_\mathcal V=1]=1$. 
\end{itemize}
\end{theorem}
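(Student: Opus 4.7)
The plan is to deduce both statements from the indistinguishability Theorem~\ref{thm:Indistinguishability} via a thinning coupling, in the spirit of the uniqueness-monotonicity argument of Häggström-Peres-Schonmann and Lyons-Schramm for Bernoulli percolation on graphs.

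The coupling is standard. Let $\omega_2$ be a Poisson point measure on $\mathsf X$ of intensity $\lambda_2\mu_\mathsf X$ and, conditionally on $\omega_2$, color each of its atoms independently black with probability $\lambda_1/\lambda_2$ and white otherwise; let $\omega_1$ be the submeasure of black atoms. Classical thinning yields that $\omega_i$ has law $\mathbf P_{\lambda_i}$ for $i=1,2$ and that $\omega_1\leq\omega_2$ pointwise. The joint law of $(\omega_1,\omega_2)$, viewed as a $\{0,1\}$-marked point measure on $\mathsf X$, is isometry invariant (isometries act trivially on the marks) and both insertion- and deletion-tolerant. The proof of Theorem~\ref{thm:Indistinguishability} carries over verbatim to this marked setting, exactly as in the treatment of i.i.d.\ radii in Section~\ref{sec:random-radii}; I use this marked version below.

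For part (a), assume $\mathbf P_{\lambda_1}[N_\mathcal O=1]=1$ and suppose by contradiction that $\mathbf P_{\lambda_2}[N_\mathcal O\geq 2]>0$. Since $\omega_1\leq\omega_2$ gives $\mathcal O(\omega_1)\subset\mathcal O(\omega_2)$, the a.s.\ unique unbounded component of $\mathcal O(\omega_1)$ is contained in some unbounded component of $\mathcal O(\omega_2)$. Introduce the isometry-invariant occupied component property
\[
\mathcal A=\bigl\{(x,(\omega_1,\omega_2)):\ \text{the component of }x\text{ in }\mathcal O(\omega_2)\text{ meets an unbounded component of }\mathcal O(\omega_1)\bigr\}.
\]
Then $\mathcal A$ holds on some unbounded $\mathcal O(\omega_2)$-component with positive probability, so the marked version of Theorem~\ref{thm:Indistinguishability}(a) forces it to hold on every unbounded $\mathcal O(\omega_2)$-component almost surely. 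But the unique unbounded component of $\mathcal O(\omega_1)$ can meet only one unbounded component of $\mathcal O(\omega_2)$, yielding the contradiction.

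Part (b) is symmetric. Now $\omega_1\leq\omega_2$ gives $\mathcal V(\omega_2)\subset\mathcal V(\omega_1)$, so every unbounded $\mathcal V(\omega_2)$-component sits in a unique unbounded $\mathcal V(\omega_1)$-component. Assuming $\mathbf P_{\lambda_2}[N_\mathcal V=1]=1$, and noting that the finite-expected-components hypothesis on $\omega_1$ is exactly what the assumption on $\mathbf P_{\lambda_1}$ provides, apply the marked Theorem~\ref{thm:Indistinguishability}(b) with the vacant component property "the $\mathcal V(\omega_1)$-component of $x$ contains an unbounded component of $\mathcal V(\omega_2)$" to conclude that every unbounded $\mathcal V(\omega_1)$-component contains one; since there is at most one unbounded $\mathcal V(\omega_2)$-component, $N_\mathcal V=1$ almost surely at $\lambda_1$. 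The single delicate point is verifying that Theorem~\ref{thm:Indistinguishability} survives the passage to the marked coupling, but once this is granted the rest of the argument is bookkeeping.
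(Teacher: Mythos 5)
Your proposal is correct and follows essentially the same route as the paper: reduce to the marked/paired extension of Theorem~\ref{thm:Indistinguishability} (Remark~\ref{rem:Indistinguishability-extension}) applied to the component property ``contains/meets an unbounded component of the other set,'' with the coupling chosen so that the indistinguishability acts on $\mathcal O(\omega_2)$ in (a) and on $\mathcal V(\omega_1)$ in (b). The only cosmetic difference is that for (b) the paper realizes the coupling by superposing an independent Poisson process of intensity $(\lambda_2-\lambda_1)\mu_{\mathsf X}$ onto $\omega_1$ rather than by thinning $\omega_2$ — the same joint law, but phrased so that the deletion-tolerance required by Remark~\ref{rem:Indistinguishability-extension} (of $\omega_1$ jointly with independent side information) is immediate.
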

Part (a) of Theorem~\ref{thm:monotonicity-of-uniqueness} is not new. It was proven by Tykesson in \cite{Tykesson-MofU} using two different proofs, adapting from discrete setting the arguments of H\"aggstr\"om and Peres \cite{MofU-HaggstromPeres} and Schonmann \cite{MofU-Schonmann}. (In fact, his second proof applies when $\mathsf X$ is a homogeneous space, just as the proof of Schonmann applies to arbitrary transitive graphs.) 
However, both proofs exploit crucially certain sequential revealments of connected components and do not apply to the vacant set. Lyons and Schramm \cite{LS-Indistinguishability} observed that the uniqueness monotonicity follows almost directly from the indistinguishability of infinite components, and we prove Theorem~\ref{thm:monotonicity-of-uniqueness} by adapting their argument to our setting. In particular, parts (a) and (b) are proved in essentially the same way.

\smallskip

The next implication of Theorem~\ref{thm:Indistinguishability} is a relation between non-uniqueness of unbounded components and the connectivity decay; it is a continuum analogue of \cite[Theorem~4.1]{LS-Indistinguishability}.
For a random subset $\mathcal S$ of $\mathsf X$, we denote by $\tau_\mathcal S(x,x')$ the probability that $x$ and $x'$ are in a same connected component of $\mathcal S$.  

\begin{theorem}\label{thm:connectivity}
Let $\mathbf P$ be an isometry invariant ergodic probability measure on $\mathsf M(\mathsf X)$. 
\begin{itemize}\itemsep4pt
\item[(a)]
If $\mathbf P$ is insertion-tolerant and $N_\mathcal O=\infty$ almost surely, then 
$\inf_{x,x'\in\mathsf X}\tau_\mathcal O(x,x') =0$.
\item[(b)]
If $\mathbf P$ is deletion-tolerant, 
the expected number of connected components in $\mathcal V\cap\mathsf B(0,1)$ is finite and $N_\mathcal V=\infty$ almost surely, then $\inf_{x,x'\in\mathsf X}\tau_\mathcal V(x,x') =0$.
\end{itemize}
\end{theorem}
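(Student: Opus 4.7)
The plan is to prove (a) by contradiction; part (b) will follow by an analogous argument using Theorem~\ref{thm:Indistinguishability}(b). Suppose $\alpha := \inf_{x,x' \in \mathsf X} \tau_\mathcal O(x,x') > 0$. A first easy observation is that $\theta := \mathbf P[o \text{ lies in an unbounded occupied component}]>0$: since finite components are a.s.\ bounded, $\mathbf P[o,y\text{ in the same finite component}] \to 0$ as $d(o,y) \to \infty$, and combined with $\tau_\mathcal O(o,y) \geq \alpha$ this forces $\mathbf P[o,y\text{ in the same unbounded component}] \geq \alpha/2$ for all sufficiently distant $y$, so $\theta \geq \alpha/2$.

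The heart of the proof is to apply Theorem~\ref{thm:Indistinguishability}(a) to an isometry invariant occupied component property encoding a notion of volumetric density of the cluster, along the lines of $\mathcal A_c := \{(x,\omega) : K_x \text{ is unbounded and } \liminf_{R\to\infty} \mu_\mathsf X(K_x \cap \mathsf B(x,R))/\mu_\mathsf X(\mathsf B(x,R)) \geq c\}$, where $K_x$ denotes the connected component of $x$ in $\mathcal O(\omega)$. Indistinguishability would force this $\liminf$-density to be a.s.\ a common value $\rho$ across all unbounded occupied components; the inequality $\sum_K \mu_\mathsf X(K \cap \mathsf B(x,R))/\mu_\mathsf X(\mathsf B(x,R)) \leq 1$ combined with Fatou and the hypothesis $N_\mathcal O = \infty$ a.s.\ then pins $\rho$ to zero. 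Assuming one can upgrade the $\liminf$ to an almost sure limit, bounded convergence together with the fact that finite clusters contribute $o(1)$ (because they are bounded) yields $\mathbf E[\mu_\mathsf X(K_o \cap \mathsf B(o,R))/\mu_\mathsf X(\mathsf B(o,R))] \to 0$, contradicting the bound $\mathbf E[\mu_\mathsf X(K_o \cap \mathsf B(o,R))/\mu_\mathsf X(\mathsf B(o,R))] \geq \alpha$ obtained by integrating $\tau_\mathcal O(o,y) \geq \alpha$ over $y \in \mathsf B(o,R)$.

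The main obstacle I anticipate is making the density quantity into a bona-fide isometry invariant component property and turning the $\liminf$ into a true limit. In non-amenable symmetric spaces such as $\mathbb H^d$, the ratio $\mu_\mathsf X(K \cap \mathsf B(x,R))/\mu_\mathsf X(\mathsf B(x,R))$ depends non-trivially on the basepoint $x \in K$---indeed, $\mathsf B(x,R) \triangle \mathsf B(y,R)$ has volume comparable to $\mathsf B(x,R)$ itself even for bounded $d(x,y)$---so the candidate $\mathcal A_c$ is not a priori invariant under moving within a cluster. Bridging this gap will likely require either a Palm-theoretic reformulation anchored at a typical internal point of the cluster (so that the density becomes a property of the rooted cluster under a natural invariant law), or a pointwise ergodic theorem for ball averages on $\mathsf X$ of Nevo type applied to the isometry group. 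Once these technical points are handled, the same scheme should close both (a) and (b), with the deletion-tolerance and moment hypothesis in (b) being precisely what is needed to invoke Theorem~\ref{thm:Indistinguishability}(b) for the vacant set.
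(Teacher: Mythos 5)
Your overall strategy---use indistinguishability to force a common ``density'' on all unbounded components, observe that infinitely many components of equal positive density cannot coexist, and contradict the lower bound $\mathbf E[\mu_\mathsf X(K_o\cap\mathsf B(o,R))]/\mu_\mathsf X(\mathsf B(o,R))\ge\alpha$ obtained by integrating $\tau_\mathcal O(o,y)\ge\alpha$ over $\mathsf B(o,R)$---is the right template, and that final integration step is sound. But the obstacle you flag at the end is not a technicality to be deferred: it is the crux, and as written the argument does not close. On a non-amenable symmetric space (the only case in which $N_\mathcal O=\infty$ actually occurs), the quantity $\liminf_R\mu_\mathsf X(K_x\cap\mathsf B(x,R))/\mu_\mathsf X(\mathsf B(x,R))$ genuinely depends on the basepoint $x$ and not only on the component, so $\mathcal A_c$ is not a component property and Theorem~\ref{thm:Indistinguishability} cannot be applied to it. Moreover, even granting a common value $\rho=0$ for the $\liminf$, your contradiction requires the \emph{limsup} of the ratio at the fixed origin to vanish: Fatou in the direction available ($\mathbf E[\liminf]\le\liminf\mathbf E$) is perfectly compatible with $\mathbf E[\mu_\mathsf X(K_o\cap\mathsf B(o,R))]/\mu_\mathsf X(\mathsf B(o,R))\ge\alpha$ for every $R$, and there is no Lebesgue-differentiation or Nevo-type ball-averaging theorem applicable to the random, component-dependent indicator $\mathds 1_{K_o}$ here. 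So both gaps you name are real, and neither is filled by the proposal.

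The paper resolves exactly this point by replacing the volumetric ball density with the \emph{component frequency} of H\"aggstr\"om--Jonasson: the almost sure limit $\lim_n\frac1n\sum_{i=1}^n\mathds 1_{C_\mathcal O(x)}(X_i)$ along an auxiliary random walk $(X_i)$ on $\mathsf X$ with steps uniform in unit balls. The ergodic theorem, L\'evy's $0$--$1$ law, and an independent-marking trick (to isolate a single component from the union) show that this limit exists, does not depend on the walk, and depends only on the component as a set---so it is a genuine isometry invariant component property. Indistinguishability and ergodicity then give a common frequency $c$ for all unbounded components; $c=0$ follows either from $\sum_K\mathrm{freq}(K)\le1$ with infinitely many components (the summation idea you propose) or, as the paper argues, from a gluing argument producing a component of frequency at least $2c$; and the contradiction with $\inf\tau_\mathcal O>0$ comes from $0=\lim_n\frac1n\sum_{i=1}^n\mathsf P[X_i\in C_\mathcal O(0)]\ge\inf_{x,x'}\tau_\mathcal O(x,x')$, which plays the role of your ball-average lower bound. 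If you substitute this frequency for your $\mathcal A_c$, your outline becomes a complete proof; without such a substitution it is not one.
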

The result of Theorem~\ref{thm:connectivity} is sharp for positively correlated systems. Indeed, if a random subset $\mathcal S$ of $\mathsf X$ with an isometry invariant law satisfies the FKG-inequality, more precisely, if $\mathsf P[\text{$C_\mathcal S (x)$ and $C_\mathcal S(x')$ are unbounded}]\geq 
\mathsf P[\text{$C_\mathcal S (x)$ is unbounded}]\,\mathsf P[\text{$C_\mathcal S (x')$ is unbounded}]$, where $C_\mathcal S(x)$ stands for the connected component of $x$ in $\mathcal S$, then
\[
\tau_\mathcal S(x,x')\geq\mathsf P[\text{$C_\mathcal S (x)$ and $C_\mathcal S(x')$ are unbounded}]\geq\mathsf P[\text{$C_\mathcal S (0)$ is unbounded}]^2>0,
\]
provided that $N_\mathcal S=1$ almost surely. In particular, this is the case for the Poisson-Boolean model (see \cite[Section~2.3]{MeesterRoy}). 

\smallskip

Tools that we develop to prove Theorem~\ref{thm:Indistinguishability} allow us to establish further properties of unbounded connected components in the regime of non-uniqueness. In the next theorem, which follows from Theorem~\ref{thm:percolation}, we consider an independent Poisson-Boolean model restricted to the occupied or to the vacant set and prove that it exhibits a percolation phase transition in each of infinitely many unbounded components. 
\begin{theorem}\label{thm:percolation-intro}
Let $\omega$ be a random point measure on $\mathsf X$ with an ergodic isometry invariant law and $\eta_\lambda$ an independent Poisson point measure on $\mathsf X$ with intensity $\lambda\mu_\mathsf X$. 

\smallskip

There exists $\lambda_*<\infty$ such that for all $\lambda>\lambda_*$, 
\begin{itemize}
\item[(a)]
if $\omega$ is insertion-tolerant and $N_{\mathcal O(\omega)}=\infty$ almost surely, then almost surely for every unbounded connected component $\mathcal C$ of $\mathcal O(\omega)$, 
the occupied set of the Boolean model on $\mathcal C$ driven by $\eta_\lambda|_\mathcal C$ contains an unbounded connected component;
\item[(b)]
if $\omega$ is deletion-tolerant, the expected number of connected components in $\mathcal V(\omega)\cap\mathsf B(0,1)$ is finite and $N_{\mathcal V(\omega)}=\infty$ almost surely, then almost surely for every unbounded connected component $\mathcal C$ of $\mathcal V(\omega)$, the occupied set of the Boolean model on $\mathcal C$ driven by $\eta_\lambda|_\mathcal C$ contains an unbounded connected component.
\end{itemize}
\end{theorem}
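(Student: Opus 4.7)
The plan is to derive both parts from Theorem~\ref{thm:Indistinguishability}, applied to the joint process $\xi=(\omega,\eta_\lambda)$, together with a positive-probability construction of one good unbounded component for $\lambda$ large.

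First, I would define $\mathcal A_\lambda$ by $(x,\xi)\in\mathcal A_\lambda$ if the connected component $\mathcal C$ of $x$ in $\mathcal O(\omega)$ (for part~(a), resp.\ in $\mathcal V(\omega)$ for part~(b)) is unbounded and $\mathcal O(\eta_\lambda|_\mathcal C)$ contains an unbounded connected component. Viewing $\xi$ as a marked point process on $\mathsf X\times\{0,1\}$, isometry invariance and insertion/deletion-tolerance of $\omega$ are inherited by $\xi$, since $\eta_\lambda$ is an independent Poisson process and hence itself insertion- and deletion-tolerant. The component structure underlying $\mathcal A_\lambda$ depends only on the $\omega$-marks, so $\mathcal A_\lambda$ is an isometry invariant occupied (resp.\ vacant) component property of $\xi$. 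The proof of Theorem~\ref{thm:Indistinguishability} uses only invariance and insertion/deletion-tolerance of the underlying process, so it carries over verbatim to the marked process. This yields the dichotomy: for each fixed $\lambda$, either $\mathbf P$-almost surely every unbounded component of $\mathcal O(\omega)$ (resp.\ $\mathcal V(\omega)$) has property $\mathcal A_\lambda$, or almost surely none does.

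Next, to rule out the trivial alternative for $\lambda$ large, I would exhibit, with positive probability, one unbounded component of $\mathcal O(\omega)$ (resp.\ $\mathcal V(\omega)$) containing an unbounded Boolean sub-cluster from $\eta_\lambda$. For part~(a), pick an unbounded component $\mathcal C$ (which exists with positive probability since $N_{\mathcal O(\omega)}=\infty$ almost surely) and select an infinite chain $x_0,x_1,\ldots$ of $\omega$-points in $\mathcal C$ with $d(x_i,x_{i+1})\le 2$, which exists by local finiteness of $\omega$ and connectedness of $\mathcal C$. A block-renormalization argument then applies: group the chain into disjoint blocks of large fixed length $N$ and declare a block ``good'' if a suitable region around the block contains an arrangement of $\eta_\lambda$-points whose unit balls link across the block and overlap with those of the adjacent blocks. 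For $\lambda$ sufficiently large, the probability of a good block can be made as close to $1$ as needed (uniformly in $\omega$, since the local region has volume bounded below by a multiple of the unit ball volume), exceeding the critical probability for $1$-dependent oriented site percolation on $\mathbb Z$ (via Liggett-Schonmann-Stacey domination). An infinite string of good blocks then exists with positive probability and yields the desired unbounded component of $\mathcal O(\eta_\lambda|_\mathcal C)$. Part~(b) follows analogously along an infinite path in an unbounded vacant component, with the hypothesis on the expected number of components of $\mathcal V(\omega)\cap\mathsf B(0,1)$ providing the uniform thickness of $\mathcal V(\omega)$ needed for the block estimate.

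The main obstacle is the positive-probability step. A naive chaining argument---requiring an $\eta_\lambda$-point in each overlap $\mathsf B(x_i,1)\cap\mathsf B(x_{i+1},1)$---fails since infinitely many independent events with probability strictly below $1$ almost surely have some failures. This forces a block-renormalization or multi-scale scheme. The occupied case~(a) is fairly clean because $\mathcal C$ is a union of unit balls with explicit block geometry. The vacant case~(b) is more delicate: the geometry of unbounded components of $\mathcal V(\omega)$ is implicit, and the moment assumption on vacant components must be invoked to ensure uniform block-goodness probabilities.
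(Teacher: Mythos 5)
Your first step---applying the marked-process extension of Theorem~\ref{thm:Indistinguishability} (Remark~\ref{rem:Indistinguishability-extension}) to the component property ``$\mathcal O(\eta_\lambda|_{\mathcal C})$ has an unbounded component'' and then using ergodicity of the joint law to upgrade positive probability to probability one and to produce a non-random $\lambda_*$---is exactly what the paper does. The gap is in the positive-probability step. You propose to percolate along a single infinite chain $x_0,x_1,\dots$ inside one unbounded component, renormalize it into blocks, and invoke Liggett--Schonmann--Stacey domination. But the renormalized process lives on $\mathbb Z$, i.e.\ it is a one-dimensional percolation process, and one-dimensional ($k$-dependent) site percolation with parameter strictly below $1$ has no infinite cluster: blocks at mutual distance larger than the dependence range are independent, each fails with a fixed positive probability (there is always positive probability that $\eta_\lambda$ puts no point in a given bounded region), so infinitely many blocks fail almost surely, and each failure disconnects the chain. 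There is no critical probability below $1$ to exceed on $\mathbb Z$; block renormalization does not cure the failure of the naive chaining argument that you yourself point out, because along a single chain there are no alternative routes around a bad block. Indeed, if an unbounded component of $\mathcal O(\omega)$ were a single bi-infinite tube of balls, the conclusion would be false for it; the statement is true only because, in the regime $N=\infty$, every unbounded component branches infinitely often.

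That branching is precisely what the paper exploits and what is absent from your construction. The paper proves (Theorem~\ref{thm:percolation}) that almost surely every unbounded component containing an $r$-trifurcation eventually (in $\lambda$) supports an unbounded cluster of $G_{\mathcal S,\lambda}$, via a mass-transport argument on the forest $F_{\mathcal S}$ of trifurcations in the spirit of \cite{BLPS-critical}: if some trifurcation lay in a finite tree of the subforest $F_{\mathcal S}^\lambda$ of edges already linked by $\eta_\lambda$ for every $\lambda$, mass transport would bound the expected $F_{\mathcal S}^\lambda$-degree of such trifurcations in a unit ball by twice their expected number, while monotone convergence forces this expectation to approach at least three times their expected number as $\lambda\to\infty$. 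Lemma~\ref{l:infinite-clusters-trifurcations-2} guarantees that every unbounded component contains trifurcations when $N=\infty$. If you want to keep a renormalization flavour, you would at least have to renormalize over an infinite subtree of branching number at least $2$ extracted from the trifurcation structure, not over a single chain; note that the hypotheses $N_{\mathcal O(\omega)}=\infty$ (resp.\ $N_{\mathcal V(\omega)}=\infty$) are essential for this and are not used anywhere in your positive-probability argument.
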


The next theorem, which follows from Theorem~\ref{thm:rw-transience}, states that any 
of infinitely many unbounded occupied components of an insertion-tolerant Boolean model is transient. 
\begin{theorem}[Transience]\label{thm:rw-transience-intro}
Let $\omega$ be a random point measure on $\mathsf X$ with an isometry invariant insertion-tolerant law. Let $G$ be the graph of the Boolean model driven by $\omega$, that is, the vertex set of $G$ is the support of $\omega$ and its edge set is the set of all pairs $x,x'\in\mathrm{supp}(\omega)$ with $\mathsf B(x,1)\cap\mathsf B(x',1)\neq\emptyset$. If $G$ has infinitely many infinite connected components almost surely, then they are all transient almost surely. 
\end{theorem}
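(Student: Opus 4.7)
The plan is first to use the indistinguishability theorem to reduce the statement to an all-or-nothing alternative between transience and recurrence of infinite components of $G$, and then to rule out the recurrent alternative by a continuum trifurcation argument. For the reduction, note that each isometry $\gamma$ of $\mathsf X$ induces a graph isomorphism of $G(\omega)$ onto $G(\gamma\omega)$ carrying the component meeting $\mathsf B(x,1)$ to the component meeting $\mathsf B(\gamma x,1)$, and transience of a locally finite graph is preserved under graph isomorphism. Hence the set
\[
\mathcal A=\bigl\{(x,\omega)\,:\,x\in\mathcal O(\omega)\text{ and the }G(\omega)\text{-component whose balls cover the }\mathcal O(\omega)\text{-cluster of }x\text{ is transient}\bigr\}
\]
is an isometry-invariant occupied component property. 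Theorem~\ref{thm:Indistinguishability}(a) then yields: $\mathbf P$-almost surely, either every infinite component of $G$ is transient or every one is recurrent.

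\textbf{Trifurcation balls.} Assume for contradiction the recurrent alternative. The isometry group of a symmetric space is unimodular, so the mass-transport principle is available. I would establish a continuum Burton-Keane lemma: under insertion-tolerance and non-uniqueness, with positive $\mathbf P$-probability the inserted ball at the origin is a \emph{trifurcation ball}, i.e.\ its removal splits its $G$-component into at least three infinite pieces. The mechanism is the standard one: because $N_\mathcal O=\infty$ almost surely, with positive probability at least three distinct infinite $G$-components have representative balls within a fixed distance of $0$; insertion-tolerance then allows one to place a ball at $0$ that meets exactly these three components and links them only through itself. A mass-transport step propagates this positive probability into a uniformly positive density of trifurcation balls in every infinite component.

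\textbf{Flow construction and conclusion.} Given positive density of trifurcation balls in an infinite component, I would extract a random embedded subtree $T$ by an exploration: start at a trifurcation ball, follow each of its at least three $G$-branches until the next trifurcation ball is encountered, and iterate. A Galton-Watson-type computation combined with mass transport shows that this exploration survives and that $T$ has branching number strictly greater than $1$ almost surely. By Lyons' criterion, $T$ supports a unit flow of finite energy to infinity; Rayleigh's monotonicity principle lifts this flow to $G$, so the random walk on the component is transient, contradicting the recurrent alternative and completing the proof.

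\textbf{Main obstacle.} The key technical hurdle is the continuum trifurcation lemma. Insertion-tolerance only provides absolute continuity after inserting a point in a bounded region, so one must show that, with positive probability, the pre-existing Boolean configuration near the origin is already in a position where adding $\mathsf B(0,1)$ realises a genuine three-way disconnection, and that removing this ball afterwards does not leave residual paths through the rest of the configuration. Verifying this — particularly checking that the three approaching infinite components are distinct with positive probability and quantitatively controlling how far one must look to find them — is where the real work lies; the subsequent mass-transport, tree-extraction and flow-energy arguments then proceed along the lines of \cite{LS-Indistinguishability}.
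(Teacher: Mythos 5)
Your overall strategy --- a continuum Burton--Keane trifurcation lemma, a tree structure on the trifurcations, and a unit flow of finite energy --- is essentially the route the paper takes (Sections~\ref{sec:forest}, \ref{sec:first-properties} and \ref{sec:transience}). The step you single out as the main obstacle, the trifurcation lemma, is indeed handled there by the standard glueing/mass-transport argument (Lemmas~\ref{l:tree-of-trifurcations} and \ref{l:infinite-clusters-trifurcations-2}); note only that a single inserted ball at the origin is in general not enough to join three components that merely come within a fixed distance $R$ of $0$, so one inserts a chain of balls covering $\mathsf B(0,n)$ and defines trifurcations relative to the removal of a whole local component $C_{\mathcal S}(y,r)$ rather than of one ball --- a routine fix. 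The detour through Theorem~\ref{thm:Indistinguishability} is harmless but unnecessary: once every infinite component is shown to contain trifurcations, the flow construction applies to each of them directly.

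The genuine gap is in your final step. The edges of your tree $T$ are entire $G$-paths between consecutive trifurcation balls, so the object actually sitting inside $G$ is a \emph{subdivided} tree in which the edge joining two neighbouring trifurcations is a path of some length $\ell$. Lyons' criterion applied to the combinatorial tree (minimum degree $3$, hence branching number at least $2$) gives a unit flow $\theta$ with $\sum_e\theta(e)^2<\infty$, but pushing this flow onto $G$ turns the energy into $\sum_e\ell_e\,\theta(e)^2$, and Rayleigh monotonicity only helps if \emph{this} is finite. A subdivided tree of branching number $2$ can be recurrent when the subdivision lengths grow (the binary tree whose generation-$n$ edges are subdivided into $2^n$ parts has infinite resistance to infinity), and nothing in your argument controls the $G$-distances between consecutive trifurcations; a ``Galton--Watson-type computation'' does not supply this, since those distances are neither bounded nor independent of the tree structure. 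This is exactly the crux of the paper's Theorem~\ref{thm:rw-transience}: it works inside the backbone of a minimal spanning forest of $G$, lets the flow split uniformly at trifurcations, and bounds the energy contributed by the degree-two stretches via a mass-transport argument resting on the pointwise bound $\sum_{y}\theta_y(x)^2\le 1$ (summed over all possible roots $y$), which yields $\mathsf E\big[\sum_{y\in\mathsf B(0,1)}\mathcal E(\theta_y)\big]<\infty$ without ever estimating path lengths. Without this (or an equivalent) energy estimate your proof does not close; the real work lies here rather than in the trifurcation lemma.
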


\smallskip

We now comment on the proof of Theorem~\ref{thm:Indistinguishability}. 
Although we do implement a version of the main idea of Lyons and Schramm \cite{LS-Indistinguishability}, 
the crucial difference is that we do not use transience of unbounded components. 
In particular, when applied in the setting of percolation on graphs, our approach gives a shortcut for the proof of Lyons and Schramm, see discussion below and Remark~\ref{rem:indistinguishability-graphs}.

There are only a few technical differences in implementing the proof for the occupied and the vacant sets, so we write $\mathcal S$ to denote either of them in the discussion below. 
As in \cite{LS-Indistinguishability}, we argue by contradiction and assume that not all unbounded components of $\mathcal S$ have a same component property with positive probability, from which we conclude that there is a component property $\mathcal A$, such that with positive probability, $C_\mathcal S(0)$ is unbounded, has property $\mathcal A$, and there is a \emph{pivotal ball} for the event that $C_\mathcal S(0)$ has property $\mathcal A$,
that is, the occurrence of the event is sensitive to certain local modifications to the driving point process in the pivotal ball (see Section~\ref{sec:pivotal}).

Next, for an independent Poisson point process $\mathcal Y$, we declare $y\in\mathcal Y$ a $r$-trifurcation if removal of the connected component of $y$ in $\mathsf B(y,r)$ splits $C_\mathcal S(y)$ into at least $3$ unbounded components (Definition~\ref{def:r-trifurcation}). Each of these unbounded components contains $r$-trifurcations (Lemma~\ref{l:tree-of-trifurcations}), which allows us to define a \emph{random forest $F_\mathcal S$ on $r$-trifurcations} by connecting each $r$-trifurcation $y$ by an edge to its unique neighboring $r$-trifurcation in each of the unbounded branches of $C_\mathcal S(y)$ (see Section~\ref{sec:forest}). We then consider an independent reversible simple random walk $w$ on $F_\mathcal S$ and show that the environment viewed by the random walker is stationary (see Section~\ref{sec:rw}). 
Together with the first observation, it implies that the probability of the event $\{C_\mathcal S(w(n))$ is unbounded, has property $\mathcal A$, and there is a pivotal ball $B_n$  within distance $R$ from $w(n)\}$ is positive and does not depend on $n$. Since $C_\mathcal S(w(n)) = C_\mathcal S(0)$, the event $\{C_\mathcal S(0)$ is unbounded, has property $\mathcal A$, and there is a pivotal ball $B_n$ within distance $R$ from $w(n)\}$ has uniformly positive probability. 

Now, the event $\{C_\mathcal S(0)$ is unbounded and has property $\mathcal A\}$ can be approximated by a \emph{local event} with arbitrary precision. 
At the same time, the random walk $w$ is transient, so the pivotal ball $B_n$ will be arbitrarily far from the origin for all large $n$. This observation alone is not enough to get a contradiction though (cf.\ \cite[p.\ 1816]{LS-Indistinguishability}), since one should be able to make a pivotal modification of the driving point process inside the ball $B_n$. In other words, a \emph{pivotal modification} in $B_n$ should exist, which does not intervene in the history of the random walk $w$ up to time $n$. 
In \cite{LS-Indistinguishability}, the counterpart of $w$ was a nearest-neighbor random walk on the percolation cluster of the origin, so it was enough to condition that the random walk does not visit the pivotal edge before time $n$. In our case, the situation is more subtle, since already the definition of $r$-trifurcations depends on the global topology of the component. 
We succeed by imposing finer constraints on the location of the pivotal ball and the behavior of $w$ up to time $n$, which ensure that, after the modification, all the $r$-trifurcations visited by $w$ up to time $n$ together with their neighborhoods in $F_\mathcal S$ do not change (see events $\mathcal G_n$ in the proof of Theorem~\ref{thm:Indistinguishability}). 

To summarize, a key novel ingredient in our proof, compared to \cite{LS-Indistinguishability}, is to consider the random walk $w$ on $F_\mathcal S$.
The counterpart of $w$ in \cite{LS-Indistinguishability} is a nearest-neighbor random walk on the cluster of the origin and one first had to prove that it is transient (cf.\ \cite[Sections~8.3 and 8.6]{LyonsPeresBook}). By considering $w$ instead, we get a transient random walk for free; in return, some more care is needed to implement the local modification. 
Our argument can be applied in the setting of \cite{LS-Indistinguishability} and gives a shortcut to their proof, see Remark~\ref{rem:indistinguishability-graphs}.

\smallskip

One of the key tools for this paper and the main reason to confine ourselves to symmetric spaces is the \emph{mass-transport principle}, introduced to the study of percolation by H\"aggstr\"om \cite{Haggstrom97} and developed in continuum setting by Benjamini and Schramm \cite{BenjaminiSchramm-MTP}.
A function $\phi:\mathscr B(\mathsf X)\times \mathscr B(\mathsf X)\to \R_+$ is diagonally invariant if for all $B,B'\in\mathscr B(\mathsf X)$ and all isometries $\gamma$ of $\mathsf X$, $\phi(\gamma B, \gamma B') = \phi(B,B')$. 
\begin{lemma}[Mass-transport principle]\label{l:mtp}
Let $\phi:\mathscr B(\mathsf X)\times \mathscr B(\mathsf X)\to \R_+$ be diagonally invariant. If $\phi(B_0,\mathsf X)<\infty$ for some open set $B_0$, then 
\[
\phi(B,\mathsf X) = \phi(\mathsf X,B),\quad B\in\mathscr B(\mathsf X).
\]
\end{lemma}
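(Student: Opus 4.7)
The plan is to derive the identity by averaging $\phi$ against the Haar measure on the isometry group $G$ of $\mathsf X$ and then invoking Fubini. Throughout I treat $\phi$ as a bi-measure, i.e.\ as $\sigma$-additive in each argument with the other fixed; this is automatic for every mass-transport functional that arises in the paper. Let $\lambda_G$ be a Haar measure on $G$, normalised so that its pushforward under $g\mapsto g\cdot 0$ equals $\mu_\mathsf X$. Since $G$ acts transitively on $\mathsf X$ with compact stabiliser and $\mathsf X$ carries the $G$-invariant measure $\mu_\mathsf X$, the group $G$ is unimodular. As a consequence one has the elementary identity
\[
\int_G \mathbf 1_B(g^{-1}y)\,d\lambda_G(g)=\mu_\mathsf X(B), \qquad y\in\mathsf X,\ B\in\mathscr B(\mathsf X).
\]

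I would then evaluate the double integral $\int_G\phi(A,gB)\,d\lambda_G(g)$ in two ways. On the one hand, Fubini together with the display above yields
\[
\int_G\phi(A,gB)\,d\lambda_G(g)=\mu_\mathsf X(B)\,\phi(A,\mathsf X),
\]
and the symmetric computation produces $\int_G\phi(gA,B)\,d\lambda_G(g)=\mu_\mathsf X(A)\,\phi(\mathsf X,B)$. On the other hand, diagonal invariance can be rewritten as $\phi(A,gB)=\phi(g^{-1}A,B)$, so the substitution $g\mapsto g^{-1}$, allowed by unimodularity, equates the two integrals. Therefore
\[
\mu_\mathsf X(B)\,\phi(A,\mathsf X)=\mu_\mathsf X(A)\,\phi(\mathsf X,B),\qquad A,B\in\mathscr B(\mathsf X).
\]

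To conclude, I would first replace $B_0$ by a bounded open subset $B_0'\subseteq B_0$ for which $0<\mu_\mathsf X(B_0')<\infty$ and, by monotonicity in the first argument, $\phi(B_0',\mathsf X)<\infty$. Specialising the displayed identity to $A=B_0'$ gives $\phi(\mathsf X,B)=c\,\mu_\mathsf X(B)$ with $c:=\phi(B_0',\mathsf X)/\mu_\mathsf X(B_0')$, and specialising to $B=B_0'$ returns $\phi(A,\mathsf X)=c\,\mu_\mathsf X(A)$ with the same constant, from which $\phi(A,\mathsf X)=\phi(\mathsf X,A)$ is immediate.

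The only delicate point I anticipate is the Fubini step, which implicitly asks that $\phi(A,\cdot)$ and $\phi(\cdot,B)$ be genuine Borel measures on $\mathsf X$; once this bi-measure structure is committed to, the remaining ingredients---unimodularity of $G$, the normalisation of $\lambda_G$ relative to $\mu_\mathsf X$, and the existence of a suitable $B_0'$---are standard features of the symmetric-space setting.
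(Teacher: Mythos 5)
Your strategy is sound and is essentially the standard proof of the continuum mass-transport principle; note that the paper does not prove this lemma itself but defers to \cite{BenjaminiSchramm-MTP}, whose argument likewise comes down to unimodularity of $\mathrm{Isom}(\mathsf X)$. Your two reductions --- treating $\phi$ as a bi-measure and then averaging over the group --- are both legitimate, and the endgame (both marginals are invariant measures, hence multiples of $\mu_\mathsf X$ with the same constant) is correct. The caveat you flag is real: for an arbitrary diagonally invariant \emph{function} the statement is false (e.g.\ $\phi(A,B)=\mathds 1\{A\neq\emptyset\}$ satisfies the hypothesis but $\phi(\emptyset,\mathsf X)=0\neq 1=\phi(\mathsf X,\emptyset)$), so $\sigma$-additivity must be assumed; it holds for every $\phi$ in the paper, all of which have the form $\Phi(A\times B)$ for a measure $\Phi$ on $\mathsf X^2$. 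For Tonelli you also need $\sigma$-finiteness of $\Phi$, but this follows from the hypothesis: $\phi(B_0,\mathsf X)<\infty$, diagonal invariance and the Lindel\"of property give a countable cover of $\mathsf X\times\mathsf X$ by sets $\gamma_n B_0\times\mathsf X$ of finite $\Phi$-measure.

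The one genuine error is your justification of unimodularity. The implication ``$G$ acts transitively with compact stabilisers and preserves $\mu_\mathsf X$, hence $G$ is unimodular'' is false: when the stabiliser $K$ is compact, $G/K$ \emph{always} carries a $G$-invariant measure, whether or not $G$ is unimodular, because the modular functions of $G$ and $K$ automatically agree on the compact group $K$. Concretely, the $AN$-subgroup of $\mathrm{Isom}(\mathbb H^2)$ (the $ax+b$ group) acts simply transitively on $\mathbb H^2$ with trivial stabilisers and preserves hyperbolic area, yet is non-unimodular --- and the mass-transport principle genuinely fails for functions invariant only under that subgroup. So unimodularity is exactly where the symmetric-space hypothesis must enter; it cannot come for free from a transitive measure-preserving action. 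The fix is short: $\mathrm{Isom}(\mathsf X)$ is generated by the compact stabiliser $K$ of $0$ together with the geodesic symmetries $s_x$ (the subgroup they generate contains $K$ and acts transitively, hence is everything); the modular function is a continuous homomorphism into $(\R_{>0},\cdot)$, hence trivial on $K$ and on each involution $s_x$, hence trivial on $G$. Alternatively, the identity component of $\mathrm{Isom}(\mathsf X)$ is a product of a Euclidean motion group and a semisimple group, both unimodular. With that replacement your proof is complete.
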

The proof of Lemma~\ref{l:mtp} is given in \cite{BenjaminiSchramm-MTP} for hyperbolic plane, see \cite[Theorem~5.2]{BenjaminiSchramm-MTP}, but, as remarked there, it holds also for symmetric spaces. 

\smallskip

Let us now describe the structure of the paper. 
Section~\ref{sec:notation} contains some definitions and notation used throughout the paper.
Prelimiary results are contained in Sections~\ref{sec:ins-del-properties}--\ref{sec:pivotal}. 
In Section~\ref{sec:ins-del-properties}, we prove some basic properties of insertion- and deletion-tolerant point processes. 
In Section~\ref{sec:forest}, we define $r$-trifurcations and construct a forest on $r$-trifurcations, which captures some branching structure of connected components. 
In Section~\ref{sec:first-properties}, we prove some basic results about the number and structure of unbounded connected components in the occupied and vacant sets. Section~\ref{sec:rw} is devoted to random walks on random graphs with vertices in $\mathsf X$. The main result of the section is Proposition~\ref{prop:rw-stationarity}, in which a form of stationarity of environment viewed by the walker is proven. In Section~\ref{sec:pivotal}, we introduce the notion of a pivotal set and prove the existence of pivotal balls in the presence of unbounded components of opposite types, see Lemma~\ref{l:pivotals}. Section~\ref{sec:proof-indistinguishability} is devoted to the proofs of Theorem~\ref{thm:Indistinguishability} and Proposition~\ref{prop:Indistinguishability-hyperbolic}. We also discuss there some minor extensions of Theorem~\ref{thm:Indistinguishability}, which are needed to derive Theorems~\ref{thm:monotonicity-of-uniqueness} and \ref{thm:connectivity}, see Remark~\ref{rem:Indistinguishability-extension}. Theorem~\ref{thm:monotonicity-of-uniqueness} is proven in Section~\ref{sec:uniqueness-monotonicity} and Theorem~\ref{thm:connectivity} in Section~\ref{sec:connectivity}. In Section~\ref{sec:percolation}, we consider an independent Poisson-Boolean model on unbounded connected components of a random subset of $\mathsf X$ and prove the existence of percolation phase transition in each of the unbounded components, see Theorem~\ref{thm:percolation}. We also show there how Theorem~\ref{thm:percolation} implies Theorem~\ref{thm:percolation-intro}. In Section~\ref{sec:transience}, we consider the graph induced by a homogeneous Boolean model and prove that every infinite component of the graph, which contains trifurcations, is transient, see Theorem~\ref{thm:rw-transience}. We also show there how Theorem~\ref{thm:rw-transience} implies Theorem~\ref{thm:rw-transience-intro}. 
In Section~\ref{sec:random-radii}, we prove analogues of our results for general Boolean models with random i.i.d.\ radii driven by insertion- or deletion-tolerant processes, see Theorem~\ref{thm:main-iid}.

\section{Definitions and notation}\label{sec:notation}

In this section, we collect some definitions and notation used throughout the paper. 

\smallskip

We denote by $\mathsf X$ a non-compact symmetric space with metric $d_\mathsf X$, volume measure $\mu_\mathsf X$, and a marked origin $0\in\mathsf X$. We write $\mathsf B(x,r)$ for the closed ball of radius $r$ around $x\in\mathsf X$ and note that $\mu_\mathsf X(\mathsf B(x,r)) = \mu_\mathsf X(\mathsf B(0,r))$.

\smallskip

The space of simple counting measures on $\mathsf X$ is denoted by $\mathsf M(\mathsf X)$,  
\[
\mathsf M(\mathsf X) = \Big\{\omega=\sum\limits_{i\geq 1}\delta_{x_i}
\begin{array}{c}
\text{ with distinct }x_i\in \mathsf X\text{ and }\omega(B)<\infty\text{ for all }\\
B\in\mathscr B(\mathsf X)\text{ with }\mu_\mathsf X(B)<\infty
\end{array}\Big\}.
\]
Let $\mathscr M(\mathsf X)$ be the $\sigma$-algebra on $\mathsf M(\mathsf X)$ generated by the evaluation maps $\omega\mapsto\omega(B)$, for $B\in\mathscr B(\mathsf X)$. 
To make a distinction, we will call random elements of $\mathsf M(\mathsf X)$ \emph{point measures} on $\mathsf X$ and the support of random elements of $\mathsf M(\mathsf X)$ \emph{point processes} on $\mathsf X$. 
We denote the restriction of measure $\omega$ to $B\in \mathscr B(\mathsf X)$ by $\omega|_B$.

\smallskip

We denote by $\mathscr B_0(\mathsf X)$ the set of all sets $B\in\mathscr B(\mathsf X)$ with $\mu_\mathsf X(B)\in(0,\infty)$. 

For $B\in\mathscr B_0(\mathsf X)$, the measure $\mu_\mathsf X(\cdot\cap B)/\mu_\mathsf X(B)$ is called the \emph{uniform distribution} on $B$. 

\smallskip

For a probability measure $\mathbf P$ on $(\mathsf M(\mathsf X),\mathscr M(\mathsf X))$ and $B\in\mathscr B_0(\mathsf X)$, we denote by $\mathbf P^B$ the law of $\omega+\delta_X$, where $\omega$ has law $\mathbf P$ and $X$ is independent from $\omega$ and uniformly distributed in $B$ and by $\mathbf P_B$ the law of $\omega|_B$, where $\omega$ has law $\mathbf P$. 
In particular, $\mathbf P$ is \emph{insertion- (resp.\ deletion-)tolerant}, if $\mathbf P^B$ (resp.\ $\mathbf P|_{B^c}$) is absolutely continuous with respect to $\mathbf P$ for all $B\in\mathscr B_0(\mathsf X)$. 

\smallskip

We denote by $\mathcal S$ a random open or closed subset of $\mathsf X$. For $x\in\mathsf X$ and $r>0$, we write $C_{\mathcal S}(x)$ for the connected component of $x$ in $\mathcal S$ and 
$C_{\mathcal S}(x,r)$ for the connected component of $x$ in $\mathcal S\cap\mathsf B(x,r)$.
We denote by $N_\mathcal S$ the number of unbounded components in $\mathcal S$.

\smallskip

Each $\omega\in \mathsf M(\mathsf X)$ induces the closed subset $\mathcal O$ of $\mathsf X$, the \emph{occupied set}, defined by
\[
\mathcal O = \mathcal O(\omega) = \bigcup\limits_{x\in\mathrm{supp}(\omega)}\mathsf B(x,1)
\]
and its complement $\mathcal V = \mathsf X\setminus \mathcal O$, the \emph{vacant set}. 
A set $\mathcal A\in \mathscr B(\mathsf X)\otimes \mathscr M(\mathsf X)$ is called \emph{occupied component property} if $(x,\omega)\in\mathcal A$ implies that $(x',\omega)\in\mathcal A$ for all $x'$ connected to $x$ in $\mathcal O(\omega)$ 
and it is called \emph{vacant component property} if $(x,\omega)\in\mathcal A$ implies that $(x',\omega)\in\mathcal A$ for all $x'$ connected to $x$ in $\mathcal V(\omega)$. 
Given an occupied (resp.\ vacant) component property $\mathcal A$, we say that occupied (resp.\ vacant) component $\mathcal C$ has \emph{type} $\mathcal A$ if $(x,\omega)\in\mathcal A$ whenever $x\in\mathcal C$; otherwise (if $(x,\omega)\notin\mathcal A$ for all $x\in\mathcal C$), we say that $\mathcal C$ has type $\neg\mathcal A$. 
A component property $\mathcal A$ is isometry invariant, if $(x,\omega)\in\mathcal A$ implies that $(\gamma x,\gamma\omega)\in\mathcal A$ for all isometries $\gamma$ of $\mathsf X$.

\section{Properties of insertion- and deletion-tolerant measures}\label{sec:ins-del-properties}

In this section, we collect some properties of insertion- and deletion-tolerant measures, needed in the proofs; we refer to \cite{HolroydSoo} for some other properties. 

\begin{lemma}\label{l:conditional-probability-positive}
Let $B\in \mathscr B_0(\mathsf X)$. 
\begin{itemize}\itemsep4pt
\item[(a)]
If $\mathbf P$ is insertion-tolerant, then for every $\sigma(\omega|_{B^c})$-measurable $S\subseteq B$ with $\mu_\mathsf X(S)>0$ a.s.,
$\mathbf P[\omega(S)=\omega(B)=1\,|\,\omega|_{B^c}] >0$, $\mathbf P$-a.s. on the event $\omega(B)=0$.
\item[(b)]
If $\mathbf P$ is deletion-tolerant, then 
$\mathbf P[\omega(B)=0\,|\,\omega|_{B^c}] >0$, $\mathbf P$-a.s.
\end{itemize}
\end{lemma}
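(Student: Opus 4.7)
The plan for both parts is to identify the measurable ``bad'' set $N$ of $\omega|_{B^c}$-configurations on which the conditional probability in question vanishes, and to show that the joint event consisting of $N$ together with the relevant event on $\omega|_B$ contradicts insertion- or deletion-tolerance. Part (b) is a direct one-line application of deletion-tolerance; part (a) needs a short computation with the shifted measure $\mathbf P^B$.

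For part (b), let $N := \{\mu \in \mathsf M(\mathsf X) : \mathbf P[\omega(B) = 0 \mid \omega|_{B^c} = \mu] = 0\}$, which is measurable since the regular conditional probability is a measurable function of $\omega|_{B^c}$. By definition of $N$,
\[
\mathbf P\bigl[\omega(B) = 0,\; \omega|_{B^c} \in N\bigr]
= \mathbf E\bigl[\mathbf 1_N(\omega|_{B^c})\, \mathbf P[\omega(B) = 0 \mid \omega|_{B^c}]\bigr]
= 0.
\]
The left-hand side equals $\mathbf P(\omega \in A)$ with $A := N \cap \{\mu : \mu(B) = 0\}$. Deletion-tolerance states that the law of $\omega|_{B^c}$ is absolutely continuous with respect to $\mathbf P$, hence $\mathbf P[\omega|_{B^c} \in A] = 0$. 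Since $\omega|_{B^c}$ is automatically supported on $B^c$, the inclusion $\omega|_{B^c} \in A$ is equivalent to $\omega|_{B^c} \in N$, giving $\mathbf P[\omega|_{B^c} \in N] = 0$, which is exactly (b).

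For part (a), set $N := \{\mu : \mathbf P[\omega(S) = \omega(B) = 1 \mid \omega|_{B^c} = \mu] = 0\}$, where $S = S(\omega|_{B^c}) \subseteq B$ is the given $\sigma(\omega|_{B^c})$-measurable set. The goal is to show $\mathbf P[\omega(B) = 0,\, \omega|_{B^c} \in N] = 0$, and I would proceed by contradiction, assuming this quantity is strictly positive. Consider $\mathbf P^B$, the law of $\xi := \omega + \delta_X$ with $X$ uniform in $B$ and independent of $\omega$. Since $X \in B$, one has $\xi|_{B^c} = \omega|_{B^c}$; and because $X$ is independent of $\omega|_B$ given $\omega|_{B^c}$ while $S$ is a function of $\omega|_{B^c}$, a direct computation (observing that $\xi(B) = 1$ forces $\omega(B) = 0$) yields
\[
\mathbf P^B\bigl[\xi(S) = \xi(B) = 1 \,\big|\, \omega|_{B^c}\bigr]
= \mathbf P\bigl[\omega(B) = 0 \,\big|\, \omega|_{B^c}\bigr] \cdot \frac{\mu_\mathsf X(S)}{\mu_\mathsf X(B)}.
\]
Multiplying by $\mathbf 1_N(\omega|_{B^c})$ and taking expectations turns this into
\[
\mathbf P^B\bigl[\xi(S) = \xi(B) = 1,\, \xi|_{B^c} \in N\bigr]
= \frac{1}{\mu_\mathsf X(B)}\, \mathbf E\bigl[\mu_\mathsf X(S)\, \mathbf 1_{\omega(B) = 0,\, \omega|_{B^c} \in N}\bigr],
\]
which is strictly positive by the contradiction hypothesis together with $\mu_\mathsf X(S) > 0$ a.s. However, by the very definition of $N$, the same event $\{\xi(S) = \xi(B) = 1,\, \xi|_{B^c} \in N\}$ has $\mathbf P$-probability zero, contradicting the absolute continuity $\mathbf P^B \ll \mathbf P$ guaranteed by insertion-tolerance. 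The main care needed is the bookkeeping of regular conditional distributions and of the $\sigma(\omega|_{B^c})$-measurability of $S$, but no substantive obstacle is anticipated.
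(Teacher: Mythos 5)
Your proposal is correct and follows essentially the same route as the paper: both parts hinge on the identity $\mathbf P^B[\omega(S)=\omega(B)=1,\ \omega|_{B^c}\in A] = \mathbf E\big[\tfrac{\mu_\mathsf X(S)}{\mu_\mathsf X(B)}\mathds 1_{\{\omega(B)=0,\,\omega|_{B^c}\in A\}}\big]$ for (a) and on $\omega|_{B^c}(B)=0$ being automatic for (b), with your explicit ``bad set'' $N$ playing exactly the role of the set $A$ in the paper's proof by contradiction.
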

\begin{proof}
Assume there exists $A$ such that $\mathbf P[\omega|_{B^c}\in A, w(B)=0]>0$ and $\mathbf P[\omega(S)=w(B)=1,\omega|_{B^c}\in A]=0$, where $S$ is as in (a). However, 
\begin{eqnarray*}
\mathbf P^B[\omega(S)=\omega(B)=1,\omega|_{B^c}\in A] 
&= &\mathsf P\big[(\omega+\delta_X)(S)=(\omega+\delta_X)(B)=1, \omega|_{B^c}\in A\big]\\
&= &\mathsf P\big[X\in S, \omega(B)=0, \omega|_{B^c}\in A\big]\\
&= &\mathbf E\big[\tfrac{\mu_\mathsf X(S)}{\mu_\mathsf X(B)}\,\mathds{1}_{\{\omega(B)=0,\omega|_{B^c}\in A\}}\big] >0,
\end{eqnarray*}
which contradicts the absolute continuity of $\mathbf P^B$ with respect to $\mathbf P$. This proves (a).

\smallskip

Assume now there exists $A$ such that $\mathbf P[\omega|_{B^c}\in A]>0$ and $\mathbf P[\omega(B)=0,\omega|_{B^c}\in A]=0$. We have 
\[
\mathbf P_{B^c}[\omega(B)=0,\omega|_{B^c}\in A] 
= \mathbf P[\omega|_{B^c}(B)=0,\omega|_{B^c}\in A] 
=\mathbf P[\omega|_{B^c}\in A]>0,
\]
which contradicts the absolute continuity of $\mathbf P_{B^c}$ with respect to $\mathbf P$. This proves (b).
\end{proof}

\begin{lemma}\label{l:insertion-tolerance-for-conditional}
Let $\mathbf P$ be an isometry invariant probability measure on $(\mathsf M(\mathsf X),\mathscr M(\mathsf X))$. 
Let $E\in\mathscr M(\mathsf X)$ be an isometry invariant event with $\mathbf P[E]>0$.
\begin{itemize}
\item[(a)]
If $\mathbf P$ is insertion-tolerant, then conditional measure $\mathbf P[\cdot\,|\,E]$ is insertion-tolerant. 
\item[(b)]
If $\mathbf P$ is deletion-tolerant, then conditional measure $\mathbf P[\cdot\,|\,E]$ is deletion-tolerant. 
\end{itemize}
\end{lemma}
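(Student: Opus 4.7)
My plan is to verify the defining inclusions $\mathbf P[\,\cdot\,|\,E]^B\ll \mathbf P[\,\cdot\,|\,E]$ in (a) and $\mathbf P[\,\cdot\,|\,E]_{B^c}\ll \mathbf P[\,\cdot\,|\,E]$ in (b) directly, for every $B\in\mathscr B_0(\mathsf X)$. I focus on (a); part (b) is entirely parallel, replacing $\omega+\delta_X$ throughout by $\omega|_{B^c}$ and invoking deletion-tolerance in place of insertion-tolerance.

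Write $\mathbf Q:=\mathbf P[\,\cdot\,|\,E]$ and fix $B\in\mathscr B_0(\mathsf X)$ and $A\in\mathscr M(\mathsf X)$ with $\mathbf Q(A)=0$, i.e.\ $\mathbf P(A\cap E)=0$. Insertion-tolerance of $\mathbf P$ applied to $A\cap E$ gives $\mathbf P^B(A\cap E)=0$, which unfolds to
\[
\int_{\mathsf M(\mathsf X)}\frac{1}{\mu_\mathsf X(B)}\int_B\mathds{1}_A(\omega+\delta_x)\,\mathds{1}_E(\omega+\delta_x)\,d\mu_\mathsf X(x)\,d\mathbf P(\omega)=0.
\]
By the definition of $\mathbf Q^B$, the target $\mathbf Q^B(A)=0$ is the same integral with $\mathds{1}_E(\omega)$ in place of $\mathds{1}_E(\omega+\delta_x)$, normalized by $\mathbf P(E)$. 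Splitting $\mathds{1}_E(\omega)=\mathds{1}_E(\omega)\mathds{1}_E(\omega+\delta_x)+\mathds{1}_E(\omega)\mathds{1}_{E^c}(\omega+\delta_x)$ inside the integrand reduces the whole proof to the claim
\begin{equation}\label{eq:plan-stab}
\int_{\mathsf M(\mathsf X)}\frac{1}{\mu_\mathsf X(B)}\int_B\mathds{1}_E(\omega)\,\mathds{1}_{E^c}(\omega+\delta_x)\,d\mu_\mathsf X(x)\,d\mathbf P(\omega)=0.
\end{equation}

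By the joint isometry invariance of $\mathbf P$ and $E$ and homogeneity of $\mathsf X$, the function $\phi(x):=\mathbf P[\omega\in E,\ \omega+\delta_x\notin E]$ is constant on $\mathsf X$; so \eqref{eq:plan-stab} reduces to $\phi\equiv 0$.

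I would prove $\phi\equiv 0$ via the ergodic decomposition $\mathbf P=\int\mathbf P_\alpha\,d\rho(\alpha)$ along the isometry-invariant $\sigma$-algebra: since $E$ is isometry-invariant and each $\mathbf P_\alpha$ is ergodic, $\mathbf P_\alpha(E)\in\{0,1\}$ for $\rho$-a.e.\ $\alpha$, and a disintegration argument transfers insertion-tolerance from $\mathbf P$ to $\rho$-a.e.\ $\mathbf P_\alpha$. On $\{\alpha:\mathbf P_\alpha(E)=1\}$, absolute continuity of $\mathbf P_\alpha^B$ with respect to $\mathbf P_\alpha$ gives $\mathbf P_\alpha^B(E)=1$, which kills the $\mathbf P_\alpha$-contribution to \eqref{eq:plan-stab}; on $\{\alpha:\mathbf P_\alpha(E)=0\}$ the factor $\mathds{1}_E(\omega)$ vanishes $\mathbf P_\alpha$-almost surely. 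Integrating against $\rho$ yields \eqref{eq:plan-stab}, completing (a).

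The main obstacle is the disintegration step: justifying that $\rho$-a.e.\ ergodic component of an isometry-invariant insertion- (resp.\ deletion-)tolerant measure is itself insertion- (resp.\ deletion-)tolerant. This is the sort of measurable-selection/Radon--Nikodym exercise that is standard but not a one-line observation. An alternative route bypassing ergodic decomposition would attempt to prove $\phi\equiv 0$ directly by combining the mass-transport principle (Lemma~\ref{l:mtp}) with Lemma~\ref{l:conditional-probability-positive}(a) to rule out $\phi>0$.
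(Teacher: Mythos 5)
Your reduction of part (a) to the single identity $\phi(x)=\mathbf P[\omega\in E,\ \omega+\delta_x\notin E]=0$ is correct and cleanly isolates the difficulty (likewise the parallel reduction for (b), where one needs $\mathbf P[\omega\in E,\ \omega|_{B^c}\notin E]=0$). But that identity \emph{is} the content of the lemma, and the route you propose for it does not close. The step you flag as ``standard but not a one-line observation'' --- that $\rho$-a.e.\ ergodic component of an insertion-tolerant invariant measure is itself insertion-tolerant --- is not a routine disintegration exercise in this setting: the ergodic decomposition is precisely the disintegration over the invariant $\sigma$-algebra $\mathcal I$, so this claim is at least as strong as the lemma itself (conditioning on an invariant event of positive probability is a coarsening of it). Moreover, any attempt to prove it by comparing the disintegrations of $\mathbf P^B$ and $\mathbf P$ over $\mathcal I$ requires knowing that $\omega$ and $\omega+\delta_x$ lie a.s.\ in the same ergodic component, i.e.\ that invariant events are a.s.\ insensitive to adding a point --- which is exactly $\phi\equiv 0$ again. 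The alternative you mention (mass transport plus Lemma~\ref{l:conditional-probability-positive}) is stated as a hope, not an argument.

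The missing idea, and the way the paper proceeds, is to upgrade ``isometry invariant'' to ``tail''. Since $\mathsf X$ is noncompact and both $\mathbf P$ and $E$ are invariant, a local approximation $E_n'$ of $E$ (depending on $\omega|_{\mathsf B(0,k_n)}$, with $\mathbf P[E\Delta E_n']<2^{-n}$) can be moved off to infinity by an isometry $\gamma_n$ without changing $\mathbf P[E\Delta \gamma_n E_n']$; this produces events $E_n$ that are $\sigma\big(\omega|_{\mathsf B(0,n)^c}\big)$-measurable with $\mathbf P[E\Delta E_n]<2^{-n}$, and then $E'=\limsup E_n$ is a tail event with $\mathbf P[E\Delta E']=0$. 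For a tail event one has $\mathds{1}_{E'}(\omega)=\mathds{1}_{E'}(\omega+\delta_x)$ for \emph{every} $\omega$ and $x$, and $\mathds{1}_{E'}(\omega)=\mathds{1}_{E'}(\omega|_{B^c})$, so $\phi\equiv 0$ is immediate and both parts follow (either via your computation or directly, as in the paper). Without this reduction, or an honest proof of the ergodic-component claim, the proposal is incomplete.
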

\begin{proof}
The proof is similar to the proof of \cite[Lemma~3.6]{LS-Indistinguishability}. We only show (a), since the proof of (b) is essentially the same. 

Firstly, let $E$ be a tail event of positive probability. Note that for all $x\in\mathsf X$, $\omega\in E$ iff $\omega+\delta_x\in E$. 

Let $\omega$ have distribution $\mathbf P$. For $B\in\mathscr B_0(\mathsf X)$, let $X$ be uniformly distributed in $B$ and independent from $\omega$. For every  $A\in\mathscr M(\mathsf X)$ with $\mathbf P[A\cap E]=0$, we have 
\[
\mathsf P[\omega+\delta_X\in A,\,\omega\in E] = \mathsf P[\omega+\delta_X\in A,\omega+\delta_X\in E] = \mathbf P^B[A\cap E] = 0,
\]
since $\mathbf P$ is insertion-tolerant. Thus, the law of $\omega+\delta_X$, given $\omega \in E$, is absolutely continuous with respect to $\mathbf P[\cdot\,|\,E]$; hence $\mathbf P[\cdot\,|\,E]$ is insertion-tolerant. 

\smallskip

Now, let $E\in\mathscr M(\mathsf X)$ be isometry invariant. By the isometry invariance of $E$ and $\mathbf P$, there exist events $E_n$ such that (a) $E_n$ is $\sigma\big(\omega|_{\mathsf B(0,n)^c}\big)$-measurable and (b) $\mathbf P[E\Delta E_n]<\frac1{2^n}$. Let $E' = \limsup E_n$. Note that $E'$ is a tail event and $\mathbf P[E\Delta E'] = 0$. Hence $\mathbf P[\cdot\,|\,E] = \mathbf P[\cdot\,|\,E']$ is insertion-tolerant. The proof is completed. 
\end{proof}

\section{Forest of trifurcations}\label{sec:forest}

In this section, we introduce trifurcations of unbounded connected components of a random set $\mathcal S$, marked by points of an auxiliary point process $\mathcal Y$, and define a random forest $F_\mathcal S$ on trifurcations in an isometry invariant way, which will capture some relevant information about branching structure of unbounded components. A reversible nearest-neighbor random walk on $F_\mathcal S$ will be instrumental in the proof of Theorem~\ref{thm:Indistinguishability}.

\smallskip

Let $\mathcal S$ be a random open or closed subset of $\mathsf X$. 
Let $\mathcal Y$ be a simple point process on $\mathsf X$ with an isometry invariant law and independent from $\mathcal S$.

\begin{definition}\label{def:r-trifurcation}
Let $r>0$. 
Point $y\in\mathcal Y$ is called \emph{$r$-trifurcation} for $\mathcal S$ if $C_{\mathcal S}(y)\setminus C_{\mathcal S}(y,r)$ contains at least $3$ unbounded connected components, $\mathsf B(y,1)\subset\mathcal S$, and there are no other points of $\mathcal Y$ within distance $2r$ from $y$. 
\end{definition}
The concept of trifurcations was originally introduced by Burton and Keane in \cite{BurtonKeane}. 
A version of the following result is well-known in the setting of percolation on graphs, see e.g.\ \cite[Proposition~8.33]{LyonsPeresBook}; in continuum, additional care is needed because of thin components. 
\begin{lemma}\label{l:tree-of-trifurcations}
Assume that the law of $\mathcal S$ is invariant under isometries of $\mathsf X$ and the expected number of connected components in $\mathcal S\cap\mathsf B(0,1)$ is finite. Let $r>0$. 
Almost surely if $y\in\mathcal Y$ is $r$-trifurcation for $\mathcal S$, then every unbounded component of $C_{\mathcal S}(y)\setminus C_{\mathcal S}(y,r)$ contains infinitely many $r$-trifurcations. 
\end{lemma}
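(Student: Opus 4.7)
The plan is to argue by contradiction via the mass-transport principle (Lemma~\ref{l:mtp}), adapting the classical Burton--Keane argument to the continuum setting. Suppose, contrary to the claim, that with positive probability there exist an $r$-trifurcation $y \in \mathcal Y$ and an unbounded component $U$ of $C_\mathcal S(y)\setminus C_\mathcal S(y,r)$ containing only finitely many $r$-trifurcations. Using the tree-like branching structure of trifurcations within a single cluster, I would first reduce to the case that with positive probability such a $U$ contains \emph{no} $r$-trifurcations: among the finitely many trifurcations in $U$, selecting one that is ``most distal'' with respect to $y$ yields a sub-branch inside $U$ which is itself trifurcation-free.

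Next I would introduce the diagonally invariant transport
\[
\phi(A, B) = \mathbf E\biggl[\sum_{y \in \mathcal Y}\mathds{1}\bigl\{y \in B,\, y \text{ is } r\text{-trif.}\bigr\}\sum_{U}\mathds{1}\bigl\{U \text{ has no } r\text{-trif.}\bigr\}\,\mu_\mathsf X(U \cap A)\biggr],
\]
the inner sum running over unbounded components of $C_\mathcal S(y)\setminus C_\mathcal S(y,r)$; diagonal invariance is immediate from the isometry invariance of $(\mathcal Y,\mathcal S)$. The key geometric claim needed to control the outgoing mass is that \emph{every $x \in \mathsf X$ lies in at most one such empty unbounded branch}. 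Granted this, $\phi(B_0,\mathsf X)\leq\mu_\mathsf X(B_0)<\infty$ for any bounded open ball $B_0$, so Lemma~\ref{l:mtp} yields $\phi(\mathsf X,B_0)=\phi(B_0,\mathsf X)<\infty$.

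On the other hand, under the contradiction hypothesis the set of $r$-trifurcations possessing an empty unbounded branch is an isometry-invariant point process of positive intensity, so with positive probability $B_0$ contains such a trifurcation. Since any corresponding branch $U$ is an unbounded connected subset of $\mathcal S$ and each trifurcation satisfies $\mathsf B(y,1)\subset\mathcal S$, a tubular-neighborhood argument using the Riemannian structure of $\mathsf X$ gives $\mu_\mathsf X(U)=\infty$, whence $\phi(\mathsf X,B_0)=\infty$. This contradicts the outcome of the previous paragraph, completing the argument.

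The main obstacle I anticipate is the geometric uniqueness claim that a single $x \in \mathsf X$ cannot lie in two distinct empty unbounded branches: turning the intuitive tree-like branching structure into a rigorous statement in the continuum requires carefully exploiting the $2r$-separation of $\mathcal Y$-trifurcations together with $\mathsf B(y,1)\subset\mathcal S$, so as to rule out that $x$ is simultaneously separated from two distinct trifurcations $y_1,y_2$ by the respective local components $C_\mathcal S(y_i,r)$. A secondary technical point is justifying $\mu_\mathsf X(U)=\infty$ for unbounded empty branches in the generality of the statement; the assumed finiteness of the expected number of components of $\mathcal S\cap\mathsf B(0,1)$ is what serves to exclude pathologically thin situations.
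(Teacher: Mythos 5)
The overall strategy---contradiction plus mass transport, reducing first to a trifurcation possessing an unbounded branch with \emph{no} trifurcations---is sound and close in spirit to the paper's argument. But the step on which your contradiction hinges, namely $\mu_\mathsf X(U)=\infty$ for an unbounded empty branch $U$, is a genuine gap, and it is exactly the ``thin components'' difficulty that the lemma is designed to survive. An unbounded connected subset of $\mathsf X$ can have finite or even zero volume (think of a geodesic ray, or of a Poisson line process: the latter has finitely many components in $\mathsf B(0,1)$ on average, yet every component is a null set). The hypothesis $\mathbf E[\#\{\text{components of }\mathcal S\cap\mathsf B(0,1)\}]<\infty$ controls the \emph{number} of local pieces, not their volume, so it cannot ``exclude pathologically thin situations'' in the way you hope. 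Nor does $\mathsf B(y,1)\subset\mathcal S$ help: that ball sits inside $C_\mathcal S(y,r)$, which is removed, and the branch $U$ itself need not contain any metric ball, so no tubular-neighbourhood argument applies. This matters even for the intended applications: vacant components of a Boolean model can have arbitrarily thin unbounded tendrils, so $\mu_\mathsf X(U)$ may be finite and your transport then receives only finite mass in $B_0$, yielding no contradiction.

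The paper's proof repairs precisely this point by changing what carries the mass: instead of volume, it uses an auxiliary Poisson process $\mathcal Z$ and lets each connected component $\mathcal C$ of $\mathcal S\cap\mathsf B(z,1)$, $z\in\mathcal Z$, send one unit of mass, split evenly among the $k$ trifurcations of the cluster of $\mathcal C$ whose local components are nearest to $\mathcal C$ in the intrinsic metric of $\mathcal S$. Unboundedness of the branch guarantees that infinitely many Poisson balls $\mathsf B(z,1)$ meet it---no volume lower bound needed---so a trifurcation with a finitely-populated branch receives infinite mass for a suitable finite $k$; and the outgoing mass from $\mathsf B(0,1)$ is bounded by the number of components of $\mathsf S\cap\mathsf B(z,1)$, which is where the finite-expectation hypothesis is actually consumed. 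A secondary remark: your bound $\phi(B_0,\mathsf X)\le\mu_\mathsf X(B_0)$ also needs the disjointness of all empty unbounded branches over all trifurcations, which requires a small point-set-topology argument (each branch's closure meets the corresponding local component $C_\mathcal S(y,r)$) together with the $2r$-separation; this part is repairable, but the volume step is not, so the proposal as written does not prove the lemma.
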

\begin{proof}
Assume that with positive probability there is $r$-trifurcation $y\in\mathcal Y$ such that at least one of the unbounded components of $C_{\mathcal S}(y)\setminus C_{\mathcal S}(y,r)$ contains finitely many $r$-trifurcations. By isometry invariance, with positive probability, there is such trifurcation in $\mathsf B(0,1)$. 

\smallskip

Let $\mathcal Z$ be an independent Poisson point process on $\mathsf X$ with intensity $\mu_\mathsf X$. For $z\in\mathcal Z$, let $\mathcal {CC}(z)$ be the set of connected components of $\mathcal S\cap \mathsf B(z,1)$. 
For $k\in\mathbb N$ and $\mathcal C\in \mathcal {CC}(z)$, let $T_r(\mathcal C,k)$
be the set of all $r$-trifurcations $y$ in the connected component of $\mathcal C$ in $\mathcal S$ whose local component $C_{\mathcal S}(y,r)$ is at most $k$-th nearest to $\mathcal C$ with respect to the distance in $\mathcal S$ 
and consider the (random) function $m:\mathcal Z\times\mathcal Y\to[0,1]$ such that 
\[
m(z,y) = \sum\limits_{\mathcal C\in\mathcal{CC}(z)}\frac{1}{|T_r(\mathcal C,k)|}\mathds{1}_{\{y\in T_r(\mathcal C,k)\}}.
\]
In words, any $z\in\mathcal Z$ distributes a unit of mass uniformly among $r$-trifurcations from $T_r(\mathcal C,k)$ for each $\mathcal C\in\mathcal{CC}(z)$. Note that for each $k$, 
\[
\sum_{y\in \mathcal Y} m(z,y)\leq |\mathcal {CC}(z)|, \quad z\in \mathcal Z,
\]
and there exists $k<\infty$ such that 
\[
\mathsf P\Big[\text{there exists }y\in \mathcal Y\cap\mathsf B(0,1)\text{ such that }\sum_{z\in \mathcal Z} m(z,y) = \infty\Big] >0.
\]
Thus, the function $\phi:\mathscr B(\mathsf X)\times\mathscr B(\mathsf X)\to [0,1]$ defined as 
\[
\phi(A,B) = \mathsf E\Big[\sum\limits_{z\in\mathcal Z\cap A}\sum\limits_{y\in \mathcal Y\cap B}m(z,y)\Big] 
\]
is diagonally invariant and by the Fubini theorem and the isometry invariance of $\mathcal S$, 
\[
\phi(\mathsf B(0,1),\mathsf X) \leq \mathsf E\Big[\sum\limits_{z\in\mathcal Z\cap\mathsf B(0,1)} |\mathcal{CC}(z)|\Big] = \mu_\mathsf X\big(\mathsf B(0,1)\big)\mathsf E[N]<\infty, 
\]
where $N$ is the number of connected components in $\mathcal S\cap\mathsf B(0,1)$. But 
$\phi(\mathsf X,\mathsf B(0,1)) = \infty$, which contradicts the mass-transport principle (Lemma~\ref{l:mtp}). 
The proof is completed.
\end{proof}

\smallskip

Next, we define a graph $F_{\mathcal S}=(V_{\mathcal S},E_{\mathcal S})$ on $r$-trifurcations, similarly to \cite[p.\ 1352]{BLPS-critical}. Consider marked point process $\mathcal Y'$ by assigning independent $[0,1]$-uniform random labels to the points of $\mathcal Y$, cf.\ e.g.\ \cite[Section~5.2]{LastPenrose}. 
Vertex set $V_{\mathcal S}$ is the set of all $r$-trifurcations for $\mathcal S$. 
The set of oriented edges $\vec E_{\mathcal S}$ consists of all oriented pairs of $r$-trifurcations $(y,y')$ such that  $C_{\mathcal S}(y',r)$ is closest to $C_{\mathcal S}(y,r)$ with respect to the distance in $\mathcal S$ among all $r$-trifurcations $y''$ in the same connected component of $C_{\mathcal S}(y)\setminus C_{\mathcal S}(y,r)$ as $y'$, and, in case of multiple choices, $y'$ is the unique one with the smallest label. 
By Lemma~\ref{l:tree-of-trifurcations},
for every $r$-trifurcation $y$ and every unbounded connected component $\mathcal C$ of $C_{\mathcal S}(y)\setminus C_{\mathcal S}(y,r)$ there exists precisely one $r$-trifurcation $y'\in\mathcal C$ such that $(y,y')\in\vec E_{\mathcal S}$.
The set of unoriented edges $E_{\mathcal S}$ consists of all unoriented pairs of $r$-trifurcations $y$ and $y'$ such that either $(y,y')$ or $(y',y)$ is in $\vec E_{\mathcal S}$. 
\begin{lemma}\label{l:graph-forest}
Under the assumptions of Lemma~\ref{l:tree-of-trifurcations}, almost surely, $F_{\mathcal S}$ is either a forest with only infinite trees or empty.
\end{lemma}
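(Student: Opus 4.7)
The statement splits into two facts that I would prove separately: every vertex of $F_{\mathcal S}$ has degree at least three, and $F_{\mathcal S}$ has no cycles. Together they preclude finite non-empty components, since a finite tree with $\geq 2$ vertices contains a leaf of degree at most one and an isolated vertex has degree zero.

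The degree bound is immediate from the construction. If $y \in V_{\mathcal S}$ then $C_{\mathcal S}(y)\setminus C_{\mathcal S}(y,r)$ has at least three unbounded components $\mathcal C_1, \mathcal C_2, \mathcal C_3$ by the definition of $r$-trifurcation; Lemma~\ref{l:tree-of-trifurcations} guarantees that each $\mathcal C_j$ contains further $r$-trifurcations; and the rule defining $\vec E_{\mathcal S}$ selects in each $\mathcal C_j$ a unique outgoing neighbor $y_j$ with $(y,y_j)\in\vec E_{\mathcal S}$. The $y_j$'s lie in disjoint subsets of $\mathcal S$, so they are three distinct neighbors of $y$ in $F_{\mathcal S}$, and $\deg_{F_{\mathcal S}}(y) \geq 3$.

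For the acyclicity I would argue by contradiction, exploiting the greedy nature of the edge rule. Write $D(y,y') := d_{\mathcal S}(C_{\mathcal S}(y,r), C_{\mathcal S}(y',r))$; the defining property of $(y,y')\in\vec E_{\mathcal S}$ is that $y'$ is the $D(y,\cdot)$-minimizer (modulo the label tie-breaker) among $r$-trifurcations in the unbounded branch of $C_{\mathcal S}(y)\setminus C_{\mathcal S}(y,r)$ containing $y'$. Suppose $F_{\mathcal S}$ contains a cycle and fix one of minimal length $\gamma = y_0 y_1 \cdots y_{n-1} y_0$ with $n \geq 3$. For each edge $\{y_i,y_{i+1}\}$, pick an orientation realizing it in $\vec E_{\mathcal S}$, and let $\pi_i$ be a shortest $\mathcal S$-path of length $D(y_i,y_{i+1})$ between the two local balls. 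By the greedy minimality in the selected branch, the interior of $\pi_i$ avoids the balls $C_{\mathcal S}(y'',r)$ of all other $r$-trifurcations $y''$ sharing that branch. The concatenation $\pi = \pi_0 \pi_1 \cdots \pi_{n-1}$ is then a closed loop in $\mathcal S$ whose successive segments enter and leave each $C_{\mathcal S}(y_i,r)$ through specific branches of $C_{\mathcal S}(y_i)\setminus C_{\mathcal S}(y_i,r)$. Minimality of $\gamma$ forces $y_{i-1}$ and $y_{i+1}$ to sit in distinct branches of $y_i$: otherwise the $D(y_i,\cdot)$-minimizer in the shared branch would be one of the two, and the other edge would either reduce to a chord of $\gamma$ or violate the greedy selection. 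Once the branch configurations are pinned down, traversing $\gamma$ and comparing consecutive $D$-values with the greedy minimization in each branch yields a strict monotonicity around the closed loop $\pi$, incompatible with closure.

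The main obstacle I expect is the bookkeeping in the last step. Each unoriented edge of $F_{\mathcal S}$ can arise from either or both of its two $\vec E_{\mathcal S}$-orientations, and the greedy rule combines the $\mathcal S$-distance with a label-based tie-breaker, so a direct case analysis by orientation along $\gamma$ is cumbersome. I would sidestep this by working exclusively with the canonical shortest $\mathcal S$-paths $\pi_i$ and the topological fact that distinct unbounded branches at a trifurcation are pairwise disjoint subsets of $\mathcal S \setminus C_{\mathcal S}(y,r)$, so that the only freedom in closing $\pi$ into a loop is the ordering of branches at each $y_i$—at which point the strict greedy minimality of $D$ along $\pi$ contradicts $\pi$ being closed.
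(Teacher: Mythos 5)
Your first half is fine and matches the paper: degree at least $3$ at every vertex follows from Lemma~\ref{l:tree-of-trifurcations} and the fact that the edge rule selects one outgoing neighbour in each of the $\geq 3$ pairwise disjoint unbounded branches, and acyclicity plus minimum degree $3$ rules out finite non-empty components. The problem is the acyclicity argument, whose central claim is false. You assert that minimality of the cycle forces $y_{i-1}$ and $y_{i+1}$ to lie in \emph{distinct} branches of $y_i$, "otherwise \dots the other edge would either reduce to a chord of $\gamma$ or violate the greedy selection." This conflates the unoriented edge $\{y_i,y_{i-1}\}$ with the oriented edge $(y_i,y_{i-1})$: if $y_{i-1}$ and $y_{i+1}$ share a branch of $y_i$, then indeed at most one of them can be the outgoing target of $y_i$ in that branch, but the other unoriented edge is simply realized by the reverse orientation $(y_{i-1},y_i)\in\vec E_{\mathcal S}$ --- no chord appears and nothing is violated. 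In fact the truth is the opposite of what you claim, and it is the linchpin of the paper's proof: two trifurcations adjacent in $F_{\mathcal S}$ are never separated by a third trifurcation $y$, because any $\mathcal S$-path between local components lying in different branches of $y$ must cross $C_{\mathcal S}(y,r)$, so $C_{\mathcal S}(y,r)$ is strictly closer than the purported target (the local components are pairwise at positive distance since trifurcations are $2r$-separated), contradicting the greedy choice. Consequently \emph{all} $v_j$, $j\neq i$, lie in one and the same branch of $v_i$, each $v_i$ emits exactly one of its two cycle edges, and the cycle acquires a consistent orientation.

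Even granting a correct branch configuration, your closing step does not deliver a contradiction. The greedy rule only yields the non-strict inequality $D(v_i,v_{i+1})\leq D(v_i,v_{i-1})$; chained around the cycle (using symmetry of $D$) this forces all the distances to be \emph{equal}, not strictly monotone, so "strict greedy minimality of $D$ along $\pi$" is not available. The actual contradiction in the paper comes precisely from the tie-breaking labels that you propose to sidestep as bookkeeping: equality of all distances together with the selection rule gives $\mathrm{label}(v_{i+1})<\mathrm{label}(v_{i-1})$ for every $i$, which is impossible around a closed cycle. So the tie-breaker is not an inessential complication but the mechanism that kills the cycle; an argument that avoids it (or that relies on the shortest paths $\pi_i$ avoiding other local components, which also only follows from strict, not tied, minimality) cannot be completed as sketched.
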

\begin{proof}
By Lemma~\ref{l:tree-of-trifurcations}, if $F_{\mathcal S}$ is non-empty, then all its connected components are infinite. 
Assume that there is a cycle $(v_1,\ldots, v_k)$ in $F_{\mathcal S}$ whose vertices are all different. 
It follows from the definition of $F_{\mathcal S}$ that for every $i$, all the vertices $v_j$, $j\neq i$, are contained in a same unbounded connected component of $C_{\mathcal S}(v_i)\setminus C_{\mathcal S}(v_i,r)$. 
Thus, for every $i$, exactly one of the edges $(v_i,v_{i+1})$ or $(v_i,v_{i-1})$ is in $\vec E_{\mathcal S}$; here, $v_0=v_k$. 
Assume, without loss of generality, that $(v_0,v_1),\ldots, (v_{k-1},v_k)\in\vec E_{\mathcal S}$. 
Then for every $i$, the distance in $\mathcal S$ from the local connected component $C_{\mathcal S}(v_i,r)$ to $C_{\mathcal S}(v_{i+1},r)$ is not larger than the distance to $C_{\mathcal S}(v_{i-1},r)$. Thus, all the distances in $\mathcal S$ between $C_{\mathcal S}(v_i,r)$ and $C_{\mathcal S}(v_{i+1},r)$ must be equal. But then the label of $v_{i+1}$ is strictly smaller than the label of $v_{i-1}$ for all $i$, which is impossible. Hence $F_{\mathcal S}$ contains no cycles. 
\end{proof}

\begin{lemma}\label{l:graph-degrees}
Under the assumptions of Lemma~\ref{l:tree-of-trifurcations}, almost surely, every vertex in $F_{\mathcal S}$ has finite degree. 
\end{lemma}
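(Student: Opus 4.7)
The plan is to split the degree at any $y \in V_\mathcal S$ into the out-degree (the number of $y'$ with $(y, y') \in \vec E_\mathcal S$) and the in-degree (the number of $y'$ with $(y', y) \in \vec E_\mathcal S$) and to bound each almost surely. By construction the out-degree equals the number of unbounded connected components of $C_\mathcal S(y) \setminus C_\mathcal S(y, r)$. To control this, I would first show that every such component $\mathcal C$ meets the open annulus $A(y) := \{x \in \mathsf X : r < d_\mathsf X(x, y) < r + 3\}$ and, more specifically, the middle sphere $S_y := \{d_\mathsf X(x, y) = r + 3/2\}$. Fix $p \in \mathcal C$ and a path $\gamma$ in $\mathcal S$ from $p$ to $y$, and let $t^* := \inf\{t : \gamma(t) \in \mathsf B(y, r)\}$; then the segment $\gamma([0, t^*))$ lies in $\mathcal S \setminus C_\mathcal S(y, r)$, is connected to $p$, hence lies inside $\mathcal C$, and approaches $\partial \mathsf B(y, r)$ from outside as $t \to t^*$. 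Combining this with unboundedness, connectedness of $\mathcal C$ and the intermediate value principle forces $\mathcal C \cap S_y \neq \emptyset$. Moreover, distinct unbounded components lie in distinct components of $\mathcal S \cap A(y)$, since any connecting path in $\mathcal S \cap A(y) \subset \mathcal S \setminus C_\mathcal S(y, r)$ would otherwise merge them in $C_\mathcal S(y) \setminus C_\mathcal S(y, r)$. Covering the compact sphere $S_y$ by finitely many unit balls $\mathsf B(x_i(y), 1) \subset A(y)$ (with cardinality a fixed constant $N_{\mathrm{cov}}$, chosen in a $y$-equivariant way) and applying the same argument locally yields
\[
\mathrm{out\text{-}deg}(y) \leq \sum_{i=1}^{N_{\mathrm{cov}}} \#\mathrm{comp}\bigl(\mathcal S \cap \mathsf B(x_i(y), 1)\bigr).
\]

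I would then bound the in-degree via Lemma~\ref{l:mtp} applied to the diagonally invariant function
\[
\phi(A, B) := \mathsf E\bigl[\#\{(y, y') \in \vec E_\mathcal S : y \in A,\ y' \in B\}\bigr].
\]
By the definition of $r$-trifurcation the vertex set $V_\mathcal S$ is $2r$-separated, so $|V_\mathcal S \cap \mathsf B(0, 1)| \leq M(r)$ almost surely for some finite $M(r)$ depending only on $r$ and $\mathsf X$. Combining with the out-degree bound, the independence of $\mathcal Y$ and $\mathcal S$, and the isometry invariance of $\mathcal S$ (which gives $\mathsf E[\#\mathrm{comp}(\mathcal S \cap \mathsf B(x_i(y), 1)) \mid \mathcal Y] = \mathsf E[\#\mathrm{comp}(\mathcal S \cap \mathsf B(0, 1))]$ for each $y \in \mathcal Y$), one obtains
\[
\phi(\mathsf B(0, 1), \mathsf X) = \mathsf E\Big[\sum_{y \in V_\mathcal S \cap \mathsf B(0, 1)} \mathrm{out\text{-}deg}(y)\Big] \leq M(r) \cdot N_{\mathrm{cov}} \cdot \mathsf E\bigl[\#\mathrm{comp}(\mathcal S \cap \mathsf B(0, 1))\bigr] < \infty.
\]
Lemma~\ref{l:mtp} then yields $\phi(\mathsf X, \mathsf B(0, 1)) < \infty$, so the expected total in-degree of trifurcations in $\mathsf B(0, 1)$ is also finite. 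Covering $\mathsf X$ by countably many unit balls and taking a union of null events, one concludes $\deg(y) \leq \mathrm{out\text{-}deg}(y) + \mathrm{in\text{-}deg}(y) < \infty$ for every $y \in V_\mathcal S$ almost surely.

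The main technical obstacle is the topological step in the out-degree bound: one must verify that each unbounded component of $C_\mathcal S(y) \setminus C_\mathcal S(y, r)$ truly reaches the annulus $A(y)$ via a subpath lying inside that same component, not merely through some path in $\mathcal S$ that happens to cross $C_\mathcal S(y, r)$. The path-segment argument above, taking the initial portion of a connecting path before it first enters $\mathsf B(y, r)$ and noting that this segment itself lies in $\mathcal C$, handles precisely this subtlety.
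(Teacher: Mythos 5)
Your proposal is correct and follows essentially the same route as the paper: bound the out-degree of a trifurcation $y$ by the number of connected components of $\mathcal S$ intersected with finitely many unit balls covering a sphere around $y$ (using that every unbounded component of $C_{\mathcal S}(y)\setminus C_{\mathcal S}(y,r)$ must cross that sphere), use the separation of trifurcations and the finite-expectation assumption to get $\phi(\mathsf B(0,1),\mathsf X)<\infty$ for the edge-counting mass transport, and then invoke Lemma~\ref{l:mtp} to control the in-degree. The only differences are cosmetic (the paper covers $\partial\mathsf B(0,2r+1)$ with a covering fixed at the origin rather than a $y$-equivariant one, and is terser about the topological step you spell out).
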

\begin{proof}
Let $m:V_{\mathcal S}\times V_{\mathcal S}\to\{0,1\}$ be the adjacency matrix of $(V_{\mathcal S},\vec E_{\mathcal S})$. 
Note that the degree of $y$ in $F_{\mathcal S}$ is at most $\sum_{y'\in V_{\mathcal S}}m(y,y') + \sum_{y'\in V_{\mathcal S}}m(y',y)$. Consider the diagonally invariant function 
$\phi:\mathscr B(\mathsf X)\times\mathscr B(\mathsf X)\to [0,1]$ defined as 
\[
\phi(A,B) = \mathsf E\Big[\sum\limits_{y\in\mathcal Y\cap A}\sum\limits_{y'\in \mathcal Y\cap B}m(y,y')\Big].
\]
By the definition of $r$-trifurcation, there is at most one $r$-trifurcation $y$ in $\mathsf B(0,r)$ and the number of unbounded components in $C_\mathcal S(y)\setminus C_\mathcal S(y,r)$ is at most the number of components in all $\mathcal S\cap \mathsf B(x_i,1)$, $i\in I$, where $x_i\in \partial \mathsf B(0,2r+1)$ and $\cup_{i\in I}\mathsf B(x_i,1)\supseteq \partial \mathsf B(0,2r+1)$. Since the expectation of the latter is finite by the assumptions of the lemma and the definition of $F_\mathcal S$, we obtain that 
$\phi(\mathsf B(0,r),\mathsf X)<\infty$. Hence by the mass-transport principle (Lemma~\ref{l:mtp}), also $\phi(\mathsf X,\mathsf B(0,r))<\infty$. The former implies that almost surely $\sum_{y'\in V_{\mathcal S}}m(y,y')<\infty$, for all $y\in V_{\mathcal S}$, and the latter that $\sum_{y'\in V_{\mathcal S}}m(y',y)<\infty$. The proof is completed. 
\end{proof}

\begin{corollary}\label{cor:graph-transience}
Under the assumptions of Lemma~\ref{l:tree-of-trifurcations}, there exists $\rho>0$ such that almost surely, for every $x\in V_\mathcal S$, the probability that a simple random walk on $F_{\mathcal S}$ never returns to $x$ is bigger than $\rho$. 
\end{corollary}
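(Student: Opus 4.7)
The plan is to show that the constant $\rho$ can be taken equal to the deterministic value $1/2$, by combining Lemmas~\ref{l:tree-of-trifurcations}--\ref{l:graph-degrees} with a classical effective resistance bound on an infinite tree of minimum degree at least $3$.

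First I would verify that every vertex $y$ of $F_{\mathcal{S}}$ has degree at least $3$. Since $y$ is an $r$-trifurcation, $C_{\mathcal{S}}(y) \setminus C_{\mathcal{S}}(y,r)$ has at least three unbounded components; by Lemma~\ref{l:tree-of-trifurcations} each of them contains an $r$-trifurcation, and the definition of $\vec{E}_{\mathcal{S}}$ designates exactly one such $r$-trifurcation in each unbounded component as a forward-neighbor of $y$. Distinct unbounded components yield distinct neighbors, so $\deg_{F_{\mathcal{S}}}(y) \geq 3$. Combined with Lemmas~\ref{l:graph-forest} and~\ref{l:graph-degrees}, this shows that almost surely every connected component $T$ of $F_{\mathcal{S}}$ is an infinite tree with all degrees finite and minimum degree at least $3$. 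The corollary thereby reduces to the deterministic claim that on any such tree $T$, the SRW escape probability at every vertex is at least $1/2$.

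For this, I would fix $x \in T$ and a neighbor $y$ of $x$, and let $T_y$ denote the subtree obtained by removing the edge $\{x,y\}$ and keeping the component of $y$. In $T_y$ rooted at $y$, every vertex has at least two children, since each has $T$-degree $\geq 3$ while exactly one of its $T$-edges leads to its parent in $T_y$ (the ``parent'' of $y$ being $x$). Assigning unit resistance to every edge, I would exhibit a unit flow from $y$ to $\infty$ in $T_y$ by choosing two children at each vertex and splitting the incoming current equally between them; the resulting binary subtree has $2^n$ edges between depths $n-1$ and $n$, each carrying $2^{-n}$, so by Thomson's principle
\[
R_{\mathrm{eff}}(y \leftrightarrow \infty \text{ in } T_y) \;\leq\; \sum_{n=1}^{\infty} 2^{n} \cdot (2^{-n})^{2} \;=\; 1.
\]
Adding the unit-resistance edge $\{x,y\}$ and putting the $k := \deg_T(x) \geq 3$ such branches in parallel gives $R_{\mathrm{eff}}(x \leftrightarrow \infty \text{ in } T) \leq 2/k$. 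The standard identity $P_x[\text{SRW never returns to } x] = (\deg_T(x) \cdot R_{\mathrm{eff}}(x \leftrightarrow \infty))^{-1}$ then yields escape probability at least $1/(k \cdot 2/k) = 1/2$, uniformly in $x$ and in the realization.

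The argument is essentially direct and the resistance computation is standard; the substantive input is the degree-$3$ lower bound, which itself relies on Lemma~\ref{l:tree-of-trifurcations} to guarantee that every unbounded branch of a trifurcation genuinely contains a forward-neighbor in $F_{\mathcal{S}}$. Notably, no transience-of-components input is required at this point, only the forest structure of $F_{\mathcal{S}}$ together with the trifurcation property.
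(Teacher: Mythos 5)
Your proposal is correct and follows the same route as the paper: the paper's proof consists precisely of the observation that $F_{\mathcal S}$ is a forest whose trees are infinite with every vertex of degree at least $3$ (and finite degree, by Lemmas~\ref{l:graph-forest} and \ref{l:graph-degrees}), after which it declares ``the claim follows.'' Your effective-resistance computation is exactly the standard argument the paper leaves implicit, and your explicit bound (escape probability at least $1/2$, so any $\rho<1/2$ works) is a valid way to supply the missing details.
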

\begin{proof}
By Lemma~\ref{l:graph-forest}, if $F_{\mathcal S}$ is non-empty, then it is a forest, whose trees are infinite and each vertex has degree at least $3$. The claim follows. 
\end{proof}

\section{First properties of the occupied and vacant sets}\label{sec:first-properties}

In this section, we prove basic properties on the number and structure of unbounded connected components in the occupied (resp.\ vacant) set of the Boolean model driven by insertion- (resp.\ deletion-)tolerant point measure, where we use a standard method of ``glueing'' of unbounded connected components.

\begin{lemma}\label{l:number-of-infinite-clusters-occupied}
Let $\mathbf P$ be isometry invariant. 
\begin{itemize}\itemsep4pt
\item[(a)]
If $\mathbf P$ is insertion-tolerant, then $N_\mathcal O\in \{0,1,\infty\}$ $\mathbf P$-almost surely.
\item[(b)]
If $\mathbf P$ is deletion-tolerant, then $N_\mathcal V\in \{0,1,\infty\}$ $\mathbf P$-almost surely.
\end{itemize}
\end{lemma}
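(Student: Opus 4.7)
The plan is to argue by contradiction using a Burton--Keane style gluing argument, combined with Lemma~\ref{l:insertion-tolerance-for-conditional} so that conditioning on the ``bad'' event preserves the tolerance property.

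For part (a), suppose toward contradiction that $\mathbf P[N_\mathcal O = k] > 0$ for some integer $k$ with $2 \le k < \infty$. The event $\{N_\mathcal O = k\}$ is isometry invariant, so by Lemma~\ref{l:insertion-tolerance-for-conditional}(a) the conditional measure $\mathbf P' := \mathbf P[\,\cdot\,\mid N_\mathcal O = k]$ is isometry invariant, insertion-tolerant, and satisfies $\mathbf P'[N_\mathcal O = k] = 1$. Because every unbounded component meets $\mathrm{supp}(\omega)$ at some point at finite distance from $0$, the events $E_R := \{\text{each of the $k$ unbounded components of $\mathcal O$ meets $\mathsf B(0,R)$}\}$ increase to the sure event, so I can fix $R$ with $\mathbf P'[E_R] > 0$. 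Next, cover $\mathsf B(0, R+2)$ by finitely many open balls $B_1, \ldots, B_N$ of radius $\tfrac12$, let $X_1, \ldots, X_N$ be independent uniforms in $B_1, \ldots, B_N$ independent of $\omega$, and set $\widetilde\omega := \omega + \sum_{i=1}^N \delta_{X_i}$.

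An elementary density computation shows that $\nu \ll \mathbf P'$ together with the insertion-tolerance of $\mathbf P'$ gives $\nu^{B} \ll (\mathbf P')^{B} \ll \mathbf P'$ for every $B\in\mathscr B_0(\mathsf X)$; iterating, the law $\mathbf Q'$ of $\widetilde\omega$ is absolutely continuous with respect to $\mathbf P'$. Since $X_i\in B_i$ implies $\mathsf B(X_i,1)\supseteq B_i$, we have $\mathcal O(\widetilde\omega) \supseteq \bigcup_i B_i \supseteq \mathsf B(0,R+2)$, and enlarging $\mathcal O$ can only merge components. Hence on $E_R$ every one of the $k$ unbounded components of $\mathcal O(\omega)$ reaches $\mathsf B(0, R)$ and is absorbed into the (unique) component of $\mathcal O(\widetilde\omega)$ containing the fully occupied $\mathsf B(0, R+2)$, forcing $N_{\mathcal O(\widetilde\omega)} \le 1$. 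Consequently $\mathbf Q'[N_\mathcal O \le 1] \ge \mathbf P'[E_R] > 0$, and absolute continuity yields $\mathbf P'[N_\mathcal O \le 1] > 0$, contradicting $\mathbf P'[N_\mathcal O = k] = 1$.

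For part (b), proceed symmetrically. Assuming $\mathbf P[N_\mathcal V = k] > 0$ with $2\le k<\infty$, set $\mathbf P' := \mathbf P[\,\cdot\,\mid N_\mathcal V = k]$, which is isometry invariant and deletion-tolerant by Lemma~\ref{l:insertion-tolerance-for-conditional}(b), and fix $R$ with $\mathbf P'[E_R] > 0$, where now $E_R$ requires each of the $k$ unbounded vacant components to meet $\mathsf B(0, R)$. Apply deletion-tolerance with $B := \mathsf B(0, R+2)$ to pass to $\omega|_{B^c}$, whose law is absolutely continuous with respect to $\mathbf P'$. Any center $x$ of a remaining ball satisfies $d(0,x)>R+2$, so $\mathsf B(x,1)\cap\mathsf B(0,R+1)=\emptyset$; thus $\mathsf B(0,R+1)$ is entirely vacant after deletion. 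Shrinking $\mathcal O$ can only merge vacant components, so on $E_R$ all $k$ unbounded vacant components fuse through $\mathsf B(0,R+1)$ into at most one. Absolute continuity then forces $\mathbf P'[N_\mathcal V\le 1] > 0$, the desired contradiction.

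The main points needing care are the iterated tolerance step (which is the Radon--Nikodym observation above) and the deterministic geometric check that filling (resp.\ emptying) a ball of radius $R+2$ at the origin causes all targeted unbounded components to merge without creating new ones---each time this is immediate from the monotonicity of $N_{\mathcal O}$ (resp.\ $N_{\mathcal V}$) under adding (resp.\ removing) points. Measurability of $E_R$ and $\{N_\mathcal O=k\}$, $\{N_\mathcal V=k\}$ is standard and taken for granted.
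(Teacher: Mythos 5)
Your proposal is correct and follows essentially the same route as the paper's proof: condition on $\{N=k\}$ using Lemma~\ref{l:insertion-tolerance-for-conditional} to preserve tolerance, pick a ball meeting all $k$ unbounded components with positive probability, then glue them by inserting a covering family of points (occupied case) or deleting all points in a slightly larger ball (vacant case), and conclude by absolute continuity. The only cosmetic difference is that you spell out the iterated insertion step via the Radon--Nikodym observation, where the paper simply cites \cite[Theorem~2]{HolroydSoo}.
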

\begin{proof}
The proof is quite standard. We begin with (a). Assume that $\mathbf P[N_\mathcal O=k]>0$ for some $2\leq k<\infty$. Since event $\{N_\mathcal O=k\}$ is isometry invariant, by Lemma~\ref{l:insertion-tolerance-for-conditional}, the measure $\mathbf P[\cdot\,|\,N_\mathcal O=k]$ is isometry invariant and insertion-tolerant. Hence, without loss of generality, we may assume that $\mathbf P[N_\mathcal O=k]=1$. 

Choose $n$ so that with positive probability $\mathsf B(0,n)$ intersects all $k$ unbounded components of $\mathcal O$. 
Let $B_1,\ldots, B_m$ be a covering of $\mathsf B(0,n)$ by balls of radius $\frac12$. Note that 
$\mathsf B(0,n)\subseteq \bigcup_{i=1}^m\mathsf B(x_i,1)$ for any $x_i\in B_i$. 
Let $X_1,\ldots, X_m$ be independent, independent from $\omega$, and such that $X_i$ is uniformly distributed in $B_i$. On the one hand, there is a unique unbounded component in 
$\mathcal O\cup \bigcup_{i=1}^m\mathsf B(X_i,1)$ with positive probability. On the other hand, since $\mathbf P$ is insertion-tolerant, the law of $\omega+\sum_{i=1}^m\delta_{X_i}$ is absolutely continuous with respect to $\mathbf P$; see \cite[Theorem~2]{HolroydSoo}.
Hence $\mathbf P[N_\mathcal O=1]>0$, which contradicts the assumption $\mathbf P[N_\mathcal O=k]=1$, so (a) is proven. 

\smallskip

To show (b), assume that $\mathbf P[N_\mathcal V=k]>0$ for some $2\leq k<\infty$. As in the proof of (a), we may assume without loss of generality that $\mathbf P[N_\mathcal V=k]=1$. Choose $n$ so that with positive probability $\mathsf B(0,n)$ intersects all $k$ unbounded components of $\mathcal V$. Then $\mathcal V(\omega|_{\mathsf B(0,n+1)^c})$ contains a unique unbounded component with positive probability. Hence, by the absolute continuity of $\mathbf P_{\mathsf B(0,n+1)^c}$ with respect to $\mathbf P$, $\mathbf P[N_\mathcal V=1]>0$, which contradicts the assumption $\mathbf P[N_\mathcal V=k]=1$, so (b) is proven. 
\end{proof}

\smallskip

Let $\mathcal Y$ be an independent Poisson point process on $\mathsf X$ with intensity $\mu_\mathsf X$. 
Recall Definition~\ref{def:r-trifurcation} of $r$-trifurcation. 

\begin{lemma}\label{l:infinite-clusters-trifurcations-2}
Let $\mathbf P$ be isometry invariant.
\begin{itemize}\itemsep4pt
\item[(a)]
If $\mathbf P$ is insertion-tolerant with $\mathbf P[N_\mathcal O=\infty]=1$, then almost surely, every unbounded connected component $\mathcal C$ of $\mathcal O$ contains a $r$-trifurcation for some $r=r(\mathcal C)$. 
\item[(b)]
If $\mathbf P$ is deletion-tolerant with $\mathbf P[N_\mathcal V=\infty]=1$ and finite expected number of connected components in $\mathcal V\cap\mathsf B(0,1)$, then almost surely, every unbounded connected component $\mathcal C$ of $\mathcal V$ contains a $r$-trifurcation for some $r=r(\mathcal C)$. 
\end{itemize}
\end{lemma}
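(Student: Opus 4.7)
We prove part~(a); part~(b) follows by the parallel argument, with insertion-tolerance replaced by deletion-tolerance and the finite-expectation hypothesis on vacant components used exactly as in the proof of Lemma~\ref{l:tree-of-trifurcations} to legitimize the underlying mass-transport step.

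The strategy is by contradiction. Let
\[
E \;=\; \bigl\{\omega : \text{some unbounded component of $\mathcal O(\omega)$ contains no $r$-trifurcation for any $r>0$}\bigr\}.
\]
Assume $\mathbf P[E] > 0$. Since $E$ is isometry invariant and $E \subset \{N_\mathcal O = \infty\}$, Lemma~\ref{l:insertion-tolerance-for-conditional} lets us pass to the conditional law $\mathbf P[\,\cdot\,|\,E]$, which remains isometry invariant and insertion-tolerant and still satisfies $N_\mathcal O = \infty$ almost surely; so we may assume $\mathbf P[E] = 1$.

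The next step combines $N_\mathcal O = \infty$ with insertion to force an $r$-trifurcation. Pick $n$ large enough that the event $A := \{\mathsf B(0,n) \text{ meets at least three unbounded components of $\mathcal O$}\}$ has positive probability. Independently, by the Poisson character of $\mathcal Y$, the event $A' := \{\mathcal Y \cap \mathsf B(0, 6n+1) = \{y\} \text{ for some $y \in \mathsf B(0, 0.1)$}\}$ has positive probability, so $\mathbf P[A \cap A'] > 0$. On $A \cap A'$, pick three distinct unbounded components $\mathcal C_1, \mathcal C_2, \mathcal C_3$ meeting $\mathsf B(0,n)$ and reference points $x_i \in \mathcal C_i \cap \mathsf B(0,n)$, and use iterated insertion-tolerance (\cite[Theorem~2]{HolroydSoo}) to insert finitely many driving-measure points inside $\mathsf B(y, 2n+1)$ to realize a tripod centered at $y$ whose three arms reach the $x_i$. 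Because no inserted ball lies outside $\mathsf B(y, 2n+1) \subset \mathsf B(y, 3n+1)$, the three sets $\mathcal C_i \setminus \mathsf B(y, 3n+1)$ remain pairwise disjoint in $\mathcal O(\omega') \setminus C_{\mathcal O(\omega')}(y, 3n+1)$, and each contains an unbounded component; hence $y$ becomes a $(3n+1)$-trifurcation for $\mathcal O(\omega')$. By absolute continuity, the event ``$\mathcal Y \cap \mathsf B(0, 1)$ contains an $r$-trifurcation for some $r$'' has positive probability under $\mathbf P[\,\cdot\,|\,E]$.

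The remaining step, which is the conceptual heart of the proof, is to promote the positive-probability existence of \emph{some} trifurcation to the almost sure existence of a trifurcation in \emph{every} unbounded component, thereby contradicting $\mathbf P[E] = 1$. We plan to carry this out with a diagonally invariant mass transport in the spirit of Lemma~\ref{l:tree-of-trifurcations}: assign to each $x \in \mathcal O$ a unit of mass, transport it to a well-chosen target $\mathcal Y$-point in $C_\mathcal O(x)$ using a truncation that keeps the in-flow into any bounded set finite, and combine Lemma~\ref{l:mtp} with Lemma~\ref{l:tree-of-trifurcations} to argue that a trifurcation-free unbounded component yields an infinite out-flow from a bounded set, contradicting the MTP identity. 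The main obstacle is precisely this upgrade: insertion alone produces a trifurcation inside the \emph{merged} component created by the tripod, not automatically inside an already-existing bad component, so the transport has to be designed so that the absence of trifurcations in any unbounded component forces a quantitative imbalance in the mass bookkeeping.
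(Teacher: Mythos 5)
Your proposal has a genuine gap at the step you yourself flag as ``the conceptual heart of the proof'': you never carry it out, and the route you sketch for it (a new mass transport from all of $\mathcal O$ to $\mathcal Y$-points designed to detect a trifurcation-free component) is not worked out and is not needed. As you correctly observe, gluing three components at the origin only produces a trifurcation in the \emph{merged} component, so the positive-probability existence of \emph{some} trifurcation does not by itself say anything about a pre-existing bad component; a proof that stops there has not reached a contradiction.

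The way to close the argument is to build the contradiction into the gluing itself rather than trying to ``upgrade'' afterwards. Choose $n$ so that with positive probability $\mathsf B(0,n)$ meets at least three unbounded components \emph{one of which is the bad component} (contains no $r$-trifurcation for any $r$) --- this is possible once you condition on the bad event and use $N_\mathcal O=\infty$. After inserting a covering family of balls over $\mathsf B(0,n)$ and a single auxiliary $\mathcal Y$-point $Y$ near $0$ (with no other $\mathcal Y$-points nearby), the point $Y$ becomes an $r$-trifurcation for $r=n+2$, and the unbounded branch of $C_{\mathcal O'}(Y)\setminus C_{\mathcal O'}(Y,r)$ sitting inside the former bad component contains at most finitely many $r$-trifurcations, since the modification is local. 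By absolute continuity of the modified pair $(\mathcal O',\mathcal Y')$ with respect to $(\mathcal O,\mathcal Y)$, this event has positive probability under the original law, and it directly contradicts Lemma~\ref{l:tree-of-trifurcations}, which asserts that every unbounded branch of an $r$-trifurcation contains infinitely many $r$-trifurcations almost surely. In other words, the only mass-transport input needed is the one already encapsulated in Lemma~\ref{l:tree-of-trifurcations}; no new transport scheme of the kind you describe is required, and designing one that ``forces a quantitative imbalance'' for a trifurcation-free component is exactly the difficulty your sketch does not resolve.
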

\begin{proof}
We only show (a), since the proof of (b) is very similar. 
Assume that with positive probability there exists an unbounded component of $\mathcal O$ which contains no $r$-trifurcations for every $r$.
Choose $n$ so that with positive probability $\mathsf B(0,n)$ intersects at least $3$ unbounded connected components of $\mathcal O$, at least one of which contains no $r$-trifurcations for every $r$; denote this event by $E$. 
As in the proof of Lemma~\ref{l:number-of-infinite-clusters-occupied}, 
let $B_1,\ldots, B_m$ be a covering of $\mathsf B(0,n)$ by balls of radius $\frac12$ and let $X_1,\ldots, X_m$ be independent, independent from $\omega$ and $\mathcal Y$, and such that $X_i$ is uniformly distributed in $B_i$ and define $\mathcal O' = \mathcal O\cup \bigcup_{i=1}^m\mathsf B(X_i,1)$. 
Let $Y$ be uniformly distributed in $\mathsf B(0,1)$ and independent from everything else. Note that on event $E$, $Y$ satisfies the definition of $r$-trifurcation (Definition~\ref{def:r-trifurcation}) with 
$r=n+2$, $\mathcal S=\mathcal O'$ and $\mathcal Y$ replaced by $\mathcal Y'=\mathcal Y\big|_{\mathsf B(0,2n+5)^c}\cup\{Y\}$, and at least one of the unbounded connected components of $\mathcal C_{\mathcal O'}(Y)\setminus \mathcal C_{\mathcal O'}(Y,r)$ contains at most finitely many $r$-trifurcations. Since the law of $(\mathcal O',\mathcal Y')$ is absolutely continuous with respect to the law of $(\mathcal O,\mathcal Y)$, it follows that with positive probability for $r=n+2$, 
there exists a $r$-trifurcation $y\in\mathcal Y\cap \mathsf B(0,1)$ such that at least one of the infinite connected components of $C_\mathcal O(y)\setminus C_\mathcal O(y,r)$ contains only finitely many $r$-trifurcations. This contradicts Lemma~\ref{l:tree-of-trifurcations}. The proof is completed. 
\end{proof}

\section{Stationarity for random walks on random graphs}\label{sec:rw}

In this section, we consider reversible random walks on random graphs with vertices in $\mathsf X$ having isometry invariant law. Our aim is to show an instance of stationarity of the random environment viewed by the random walker, see Proposition~\ref{prop:rw-stationarity}; it is a key ingredient for the proof of Theorem~\ref{thm:Indistinguishability}. 

\smallskip

We identify oriented graphs with vertices in $\mathsf X$ with point measures in $\mathsf M(\mathsf X^2)$, so that the edge set of the graph is precisely the support of the point measure. Graph $G$ is unoriented if $(x,y)$ is in the support of $G$ iff $(y,x)$ is, and in what follows, we only consider unoriented graphs.
We write $V_G$ for the vertex set of $G$, $E_G$ for its edge set, $d_G(x)$ for the degree of vertex $x$ in $G$ and define $c_G(x) = d_G(x) + 1$. 
The transition probability of a simple delayed random walk on $G$ is defined by
\[
p_G(x,y) = \left\{\begin{array}{ll}\frac{1}{c_G(x)} & \text{if }\{x,y\}\in E_G\text{ or }x=y\in V_G\\ 0 &\text{else.}\end{array}\right.
\]
If $\gamma$ is an isometry of $\mathsf X$, we define $\gamma\circ G = \sum_{i\geq 1}\delta_{(\gamma (x_i),\gamma(y_i))}$, when $G = \sum_{i\geq 1}\delta_{(x_i,y_i)}$.

\smallskip

In this section, let $(\Xi,\Sigma, \mathbb P)$ be a probability space and $G:\Xi\to\mathsf M(\mathsf X^2)$ a measurable map whose images are unoriented graphs. We assume that isometries of $\mathsf X$ act on $\Xi$ so that they preserve $\mathbb P$ and $\gamma\circ (G(\xi)) = G(\gamma\xi)$ for all isometries $\gamma$ of $\mathsf X$. Furthermore, we assume that 
\begin{equation}\label{eq:graph-degree-condition}
Z := \mathbb E\Big[\sum\limits_{x_0\in V_G\cap\mathsf B(0,1)} c_G(x_0)\Big]<\infty.
\end{equation}

Given $x\in\mathsf X$ and $\xi\in\Xi$, we choose $w(0)$ at random among the vertices $V_G\cap\mathsf B(x,1)$ (if any), and consider two independent random walks $\{w(i)\}_{i\geq 0}$ and $\{w(-i)\}_{i\geq 0}$ from $w(0)$ with transition probabilities $p_G$. More precisely, we describe the joint law of $(w,\xi)$ by a unique probability measure $\mathbb {\widehat P}_x$ on $\mathsf X^\mathbb Z\times \Xi$, which satisfies  
for all $A\in\Sigma$, $k\in\mathbb N$ and $B_{-k},\ldots, B_k\in\mathscr B(\mathsf X)$, 
\begin{multline}\label{def:mathbb-widehat-Px}
\mathbb {\widehat P}_x\big[\big\{w(i)\in B_i,\,-k\leq i\leq k\big\}\times A\big]\\ 
= 
\frac1Z \,\mathbb E\Big[\mathds{1}_A(\xi)\,\Big(\sum\limits_{(x_{-k},\ldots,x_k)\in V^*} c_G(x_0)\,\prod\limits_{i=1}^k p_G(x_{i-1},x_i)\,\prod\limits_{i=-1}^{-k} p_G(x_{i+1},x_i)\Big)\Big],
\end{multline}
where $V^* = V_G^{2k+1}\cap \big(B_{-k}\times\ldots\times B_{-1}\times (\mathsf B(x,1)\cap B_0)\times B_1\times\ldots \times B_k\big)$.

\smallskip

The main result of this section is the following proposition, which is an adaptation 
to continuum setting of \cite[Lemma~3.13]{LS-Indistinguishability} (see also \cite{Haggstrom97, MofU-HaggstromPeres}). 

\begin{proposition}\label{prop:rw-stationarity}
Let $s:\mathsf X^{\mathbb Z}\to \mathsf X^{\mathbb Z}$ be the time shift defined by $(sw)(n) = w(n+1)$ and extend $s$ to $(\mathsf X^{\mathbb Z}, \Xi)$ by $s(w,\xi) = (sw,\xi)$. 
Then for every $\mathcal A\in \mathscr B(\mathsf X^{\mathbb Z})\otimes \Sigma$ invariant under isometries of $\mathsf X$ (acting diagonally on $\mathsf X^{\mathbb Z}\times \Xi$), 
\[
\widehat {\mathbb P}_0[s\mathcal A] = \widehat {\mathbb P}_0[\mathcal A].
\]
\end{proposition}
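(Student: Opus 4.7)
The strategy is to lift $\widehat{\mathbb P}_0$ to an auxiliary $\sigma$-finite measure $\nu$ on $\mathsf X^{\mathbb Z}\times\Xi$ that is automatically shift-invariant by reversibility of the delayed walk, and then use the mass-transport principle (Lemma~\ref{l:mtp}) to exchange the anchoring constraint $w(0)\in\mathsf B(0,1)$ for $w(-1)\in\mathsf B(0,1)$ inside the integrand.

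Concretely, I would introduce the measure
\[
\nu[\mathcal B]\;=\;\mathbb E\Big[\sum_{x_0\in V_G}c_G(x_0)\,\mathbb P^G_{x_0}\big[(w,\xi)\in\mathcal B\big]\Big],
\]
where $\mathbb P^G_{x_0}$ denotes the joint law of the bi-infinite walk obtained by running two independent delayed simple random walks from $x_0$ in $G$. The identity $c_G(x)\,p_G(x,y)=c_G(y)\,p_G(y,x)$, valid both for edges of $G$ and for the self-loop at $x$, exhibits $c_G$ as a reversible measure for $p_G$; moving the weight $c_G(x_0)$ through one transition at a time in the cylinder-set formula for $\nu$ then yields $\nu\circ s^{-1}=\nu$. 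From \eqref{def:mathbb-widehat-Px} one reads off
\[
Z\,\widehat{\mathbb P}_0[\mathcal A]\;=\;\int \mathds 1_{\mathcal A}\,\mathds 1_{\{w(0)\in\mathsf B(0,1)\}}\,d\nu,
\]
and since $s^{-1}\{w(0)\in\mathsf B(0,1)\}=\{w(1)\in\mathsf B(0,1)\}$ and $\mathds 1_{\mathcal A}\circ s=\mathds 1_{s^{-1}\mathcal A}$, two applications of $\nu\circ s^{-1}=\nu$ produce
\[
Z\,\widehat{\mathbb P}_0[s\mathcal A]\;=\;\int \mathds 1_{s^{-1}\mathcal A}\,\mathds 1_{\{w(0)\in\mathsf B(0,1)\}}\,d\nu\;=\;\int \mathds 1_{\mathcal A}\,\mathds 1_{\{w(-1)\in\mathsf B(0,1)\}}\,d\nu.
\]

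To conclude, I would apply Lemma~\ref{l:mtp} to
\[
\phi(A,B)\;=\;\int \mathds 1_{\mathcal A}\,\mathds 1_{\{w(0)\in A\}}\,\mathds 1_{\{w(-1)\in B\}}\,d\nu.
\]
Diagonal invariance of $\phi$ reduces to the fact that the diagonal action $(w,\xi)\mapsto(\gamma w,\gamma\xi)$ preserves $\nu$: this follows from isometry invariance of $\mathbb P$ combined with the equivariance $G(\gamma\xi)=\gamma\circ G(\xi)$ (which forces $V_{G(\gamma\xi)}=\gamma V_{G(\xi)}$ and $c_{G(\gamma\xi)}(\gamma x)=c_{G(\xi)}(x)$), while the extra factor $\mathds 1_{\mathcal A}$ is preserved because $\mathcal A$ is isometry invariant by hypothesis. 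Finiteness $\phi(\mathsf B(0,1),\mathsf X)\leq Z<\infty$ is immediate from \eqref{eq:graph-degree-condition}. Lemma~\ref{l:mtp} then gives $\phi(\mathsf B(0,1),\mathsf X)=\phi(\mathsf X,\mathsf B(0,1))$, which is exactly $\widehat{\mathbb P}_0[\mathcal A]=\widehat{\mathbb P}_0[s\mathcal A]$.

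The step I expect to require the most care is the shift-invariance of $\nu$: the two-sided walk is built by gluing two \emph{independent} one-sided walks at $w(0)$, and one must carefully check that the reversibility identity $c_G(x_0)p_G(x_0,x_1)=c_G(x_1)p_G(x_1,x_0)$ reassembles exactly the two-sided walk started from $x_1$ (equivalently, the standard fact that a reversible Markov chain size-biased by its stationary measure is time-shift stationary, here in the bi-infinite setup). The other delicate point is the bookkeeping of the diagonal isometry action needed to ensure both $\nu$-invariance and the diagonal invariance of $\phi$ required by Lemma~\ref{l:mtp}.
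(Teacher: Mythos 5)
Your proposal is correct and follows essentially the same route as the paper: your $\nu$ is (up to the constant $Z/\mu_{\mathsf X}(\mathsf B(0,1))$) exactly the paper's auxiliary measure $\mathbb Q=\int_{\mathsf X}\mu_{\mathsf X}(dx)\,\widehat{\mathbb P}_x$, whose isometry invariance, $s$-invariance via the reversibility identity $c_G(x)p_G(x,y)=c_G(y)p_G(y,x)$, and relation to $\widehat{\mathbb P}_0$ through the anchoring event $\{w(0)\in\mathsf B(0,1)\}$ are the paper's Lemmas~\ref{l:Q-gamma-invariance}--\ref{l:relation-Q-widehatPx}, and the final mass-transport step with $\phi$ matches the paper's conclusion. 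The only (harmless) discrepancy is a direction convention: tracking $s$ versus $s^{-1}$ carefully gives $w(1)$ rather than $w(-1)$ in the transported indicator, but by the time-reversal symmetry of the two-sided walk construction either choice closes the argument.
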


\smallskip

Consider the $\sigma$-finite measure $\mathbb Q$ on $\mathsf X^\mathbb Z\times \Xi$ defined by 
\[
\mathbb Q = \int\limits_{\mathsf X} \mu_\mathsf X(dx)\,\mathbb {\widehat P}_x.
\]
We begin by proving invariance properties of $\mathbb Q$.
If $\gamma$ is an isometry of $\mathsf X$, then we also denote by $\gamma$ the diagonal action of $\gamma$ on any power of $\mathsf X$ or on $\mathsf X^\mathbb Z\times \Xi$.

\begin{lemma}\label{l:Q-gamma-invariance}
Measure $\mathbb Q$ is invariant under isometries of $\mathsf X$. 
\end{lemma}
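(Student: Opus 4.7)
The plan is to establish the lemma by showing that each measure $\widehat{\mathbb P}_x$ transforms equivariantly under isometries, i.e.\ that pushing forward $\widehat{\mathbb P}_x$ by an isometry $\gamma$ of $\mathsf X$ (acting diagonally on $\mathsf X^{\mathbb Z}\times \Xi$) yields $\widehat{\mathbb P}_{\gamma x}$, and then to integrate this identity against $\mu_{\mathsf X}$ using the invariance of $\mu_{\mathsf X}$ under isometries. Since $\mathsf X$ is a symmetric (hence homogeneous Riemannian) space, $\mu_{\mathsf X}$ is invariant under its isometry group, and a change of variables $x\mapsto \gamma^{-1}x$ will absorb the shift on the outer integral.

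The first key step is the equivariance $\gamma_*\widehat{\mathbb P}_x = \widehat{\mathbb P}_{\gamma x}$. I will verify this directly from formula \eqref{def:mathbb-widehat-Px}: the relation $G(\gamma\xi)=\gamma\circ G(\xi)$ gives $V_{G(\gamma\xi)} = \gamma V_{G(\xi)}$ and $E_{G(\gamma\xi)} = \gamma E_{G(\xi)}$, so the isometry identifies $V_G\cap \mathsf B(x,1)$ with $V_{G(\gamma\xi)}\cap\mathsf B(\gamma x,1)$, preserves degrees (hence $c_G$), and preserves the transition probabilities $p_G$. Combined with the hypothesis that $\mathbb P$ is preserved by the action on $\Xi$ (so that $\mathbb E[\mathds 1_A(\xi)\cdots]$ equals $\mathbb E[\mathds 1_{\gamma^{-1}A}(\xi)\cdots]$ after renaming), this yields
\[
\widehat{\mathbb P}_x[\gamma \mathcal A] = \widehat{\mathbb P}_{\gamma^{-1} x}[\mathcal A]
\]
for every measurable $\mathcal A\subseteq \mathsf X^{\mathbb Z}\times \Xi$ and every isometry $\gamma$ (applied here to the isometry $\gamma^{-1}$). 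Note that the normalising constant $Z$ in \eqref{def:mathbb-widehat-Px} is itself invariant under $\gamma$ by the isometry invariance of $\mathbb P$ and of $\mu_{\mathsf X}$, so it causes no trouble.

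The second step is the integration. For any measurable $\mathcal A$ I will compute
\[
\mathbb Q[\gamma\mathcal A] = \int_{\mathsf X}\widehat{\mathbb P}_x[\gamma\mathcal A]\,\mu_{\mathsf X}(dx) = \int_{\mathsf X}\widehat{\mathbb P}_{\gamma^{-1}x}[\mathcal A]\,\mu_{\mathsf X}(dx) = \int_{\mathsf X}\widehat{\mathbb P}_y[\mathcal A]\,\mu_{\mathsf X}(dy) = \mathbb Q[\mathcal A],
\]
where the second equality uses the equivariance from the previous step and the third uses the isometry invariance of $\mu_{\mathsf X}$ via the substitution $y=\gamma^{-1}x$. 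This would conclude the lemma, and incidentally the hypothesis $\mathcal A$ isometry invariant is not needed here (it is needed only for Proposition~\ref{prop:rw-stationarity}, where $\mathbb Q$-invariance is combined with the time shift).

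I do not anticipate a serious obstacle: everything reduces to bookkeeping in \eqref{def:mathbb-widehat-Px}. The only place to be careful is that $w(0)$ is chosen uniformly among the possibly several vertices of $V_G$ in $\mathsf B(x,1)$, and that the prefactor $c_G(x_0)$ biases the law by the local degree; both ingredients are built into $Z$ and are preserved under $\gamma$. Once this is checked, the lemma follows line-by-line from the change of variables.
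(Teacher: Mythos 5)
Your proposal is correct and follows essentially the same route as the paper: both arguments rest on the identities $G(\gamma\xi)=\gamma\circ G(\xi)$, $c_G(\cdot)=c_{\gamma\circ G}(\gamma(\cdot))$, $p_G(\cdot,\cdot)=p_{\gamma\circ G}(\gamma(\cdot),\gamma(\cdot))$, together with the invariance of $\mathbb P$ and $\mu_{\mathsf X}$, applied to the cylinder events in \eqref{def:mathbb-widehat-Px}. The only difference is organizational --- you first isolate the equivariance $\widehat{\mathbb P}_x[\gamma\mathcal A]=\widehat{\mathbb P}_{\gamma^{-1}x}[\mathcal A]$ and then integrate, whereas the paper carries the integral over $x$ through the whole computation --- and your observation that the isometry invariance of $\mathcal A$ is not needed for this lemma is accurate.
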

\begin{proof}
Let $\gamma$ be an isometry of $\mathsf X$. It suffices to prove that  
for all $A\in\Sigma$, $k\in\mathbb N$ and $B_{-k},\ldots, B_k\in\mathscr B(\mathsf X)$, 
\[
(\gamma\circ \mathbb Q)\big[\big\{w(i)\in B_i,\,-k\leq i\leq k\big\}\times A\big]
= \mathbb Q\big[\big\{w(i)\in B_i,\,-k\leq i\leq k\big\}\times A\big].
\]
We have 
\begin{multline*}
(\gamma\circ \mathbb Q)\big[\big\{w(i)\in B_i,\,-k\leq i\leq k\big\}\times A\big]
= \mathbb Q\big[\gamma^{-1}\big\{w(i)\in B_i,\,-k\leq i\leq k\big\}\times \gamma^{-1}A\big]\\
= \mathbb Q\big[\big\{w(i)\in \gamma^{-1}B_i,\,-k\leq i\leq k\big\}\times \gamma^{-1}A\big]\\
= \frac1Z\int\limits_{\mathsf X} \mu_\mathsf X(dx)\mathbb E\Big[\mathds{1}_{\gamma^{-1}A}(\xi)\,\Big(\sum\limits_{(x_{-k},\ldots,x_k)\in V'} c_G(x_0)\,\prod\limits_{i=1}^k p_G(x_{i-1},x_i)\,\prod\limits_{i=-1}^{-k} p_G(x_{i+1},x_i)\Big)\Big],
\end{multline*}
where $V' = V_G^{2k+1}\cap\big(\gamma^{-1}B_{-k}\times\ldots\times \gamma^{-1}B_{-1}\times (\mathsf B(x,1)\cap \gamma^{-1}B_0)\times \gamma^{-1}B_1\times\ldots \times \gamma^{-1}B_k\big)$. Since $c_G(\cdot) = c_{\gamma\circ G}(\gamma(\cdot))$, 
$p_G\big(\cdot,\cdot) = p_{\gamma\circ G}(\gamma(\cdot),\gamma(\cdot))$ 
and $\gamma\circ G(\xi) = G(\gamma\xi)$, we have 
\begin{multline*}
\mathbb E\Big[\mathds{1}_{\gamma^{-1}A}(\xi)\,\Big(\sum\limits_{(x_{-k},\ldots,x_k)\in V'} c_G(x_0)\,\prod\limits_{i=1}^k p_G(x_{i-1},x_i)\,\prod\limits_{i=-1}^{-k} p_G(x_{i+1},x_i)\Big)\Big]\\
= 
\mathbb E\Big[\mathds{1}_A(\gamma\xi)\,\Big(\sum\limits_{(x_{-k},\ldots,x_k)\in \gamma V'} c_{G(\gamma\xi)}(x_0)\,\prod\limits_{i=1}^k p_{G(\gamma\xi)}(x_{i-1},x_i)\,\prod\limits_{i=-1}^{-k} p_{G(\gamma\xi)}(x_{i+1},x_i)\Big)\Big].
\end{multline*}
Note that 
$\gamma V' = V_{G(\gamma\xi)}^{2k+1}\times(B_{-k}\times\ldots\times B_{-1}\times (\mathsf B(\gamma(x),1)\cap B_0)\times B_1\times\ldots \times B_k)$. 
Thus, by the isometry invariance of $\mu_\mathsf X$ and $\mathbb P$, 
\begin{multline*}
(\gamma\circ \mathbb Q)\big[\big\{w(i)\in B_i,\,-k\leq i\leq k\big\}\times A\big]\\
= 
\frac1Z\int\limits_{\mathsf X} \mu_\mathsf X(dx)\mathbb E\Big[\mathds{1}_A(\xi)\,\Big(\sum\limits_{(x_{-k},\ldots,x_k)\in V''} c_G(x_0)\,\prod\limits_{i=1}^k p_G(x_{i-1},x_i)\,\prod\limits_{i=-1}^{-k} p_G(x_{i+1},x_i)\Big)\Big],
\end{multline*}
where 
$V'' = V_G^{2k+1}\cap\big(B_{-k}\times\ldots\times B_{-1}\times (\mathsf B(x,1)\cap B_0)\times B_1\times\ldots \times B_k\big)$, 
which is equal to $\mathbb Q\big[\big\{w(i)\in B_i,\,-k\leq i\leq k\big\}\times A\big]$. The proof is completed. 
\end{proof}

\begin{lemma}\label{l:Q-s-invariance}
Measure $\mathbb Q$ is $s$-invariant. 
\end{lemma}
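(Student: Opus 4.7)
The plan is to reduce $s$-invariance of $\mathbb Q$ to the standard fact that a Markov chain started from its stationary measure is stationary in time. The key observation is reversibility of the simple delayed random walk on $G$: since $p_G(x,y)=1/c_G(x)$ whenever $\{x,y\}\in E_G$ or $y=x$, one has $c_G(x)p_G(x,y)=c_G(y)p_G(y,x)\in\{0,1\}$ for all $x,y\in V_G$, so $c_G$ is a reversing (and in particular stationary) measure for $p_G$, for every realisation of $G$.

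First, I would rewrite $\mathbb Q[E\times A]$ for a cylinder event $E=\{w(i)\in B_i,\,-k\leq i\leq k\}$ and $A\in\Sigma$ by Tonelli. Plugging in \eqref{def:mathbb-widehat-Px} and using $\int_\mathsf X \mathbf 1_{\{x_0\in\mathsf B(x,1)\}}\,\mu_\mathsf X(dx)=\mu_\mathsf X(\mathsf B(x_0,1))=:V_1$, a finite positive constant independent of $x_0$, one obtains
\[
\mathbb Q[E\times A]=\tfrac{V_1}{Z}\,\mathbb E\Big[\mathbf 1_A(\xi)\sum_{(x_{-k},\ldots,x_k)\in V_G^{2k+1}}\mathbf 1_{\{x_i\in B_i\,\forall i\}}\,c_G(x_0)\prod_{i=1}^k p_G(x_{i-1},x_i)\prod_{i=-1}^{-k}p_G(x_{i+1},x_i)\Big].
\]
Integration against $\mu_\mathsf X(dx)$ thus absorbs the uniform-in-starting-ball weight and leaves the reversible path weight $c_G(x_0)\prod p_G$.

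Next, I would exploit stationarity. For each $\xi$, the bracket above is the $(B_{-k},\ldots,B_k)$-marginal of the $\sigma$-finite measure $\Lambda_\xi$ on $\mathsf X^{\mathbb Z}$ obtained by assigning initial weight $c_G(x_0)$ to each $x_0\in V_G$ and running independent two-sided chains with kernel $p_G$. Since $c_G$ is stationary for $p_G$, $\Lambda_\xi$ is $s$-invariant, so the bracket is unchanged if every $B_i$ is replaced by $B_{i-1}$ and the window shifts by one. As $s$ acts trivially on $\xi$, the outer expectation is also unaffected, and one concludes $\mathbb Q[s^{-1}E\times A]=\mathbb Q[E\times A]$ for every such cylinder.

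The only technical point to watch is that \eqref{def:mathbb-widehat-Px} is stated for symmetric windows $[-k,k]$ whereas $s^{-1}E$ sits on $[-k+1,k+1]$; this is handled by enlarging $k$ by one and inserting the trivial restrictions $B_{-k-1}=B_{k+1}=\mathsf X$, after which the formula and the preceding argument apply verbatim. The cylinders $E\times A$ form a $\pi$-system generating $\mathscr B(\mathsf X^{\mathbb Z})\otimes\Sigma$, and both $\mathbb Q$ and $\mathbb Q\circ s^{-1}$ are $\sigma$-finite on it (for instance, $\{w(0)\in\mathsf B(0,n)\}\times\Xi$ has $\mathbb Q$-measure at most $\mu_\mathsf X(\mathsf B(0,n+1))<\infty$ by \eqref{eq:graph-degree-condition} and isometry invariance), so the identity extends to the full product $\sigma$-algebra by the uniqueness theorem for $\sigma$-finite measures.
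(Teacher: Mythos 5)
Your proof is correct and rests on exactly the same two ingredients as the paper's: the reversibility identity $c_G(x)p_G(x,y)=c_G(y)p_G(y,x)$ and the homogeneity of $\mu_\mathsf X$ (so that the starting-ball weight $\mu_\mathsf X(\mathsf B(x_0,1))$ is a constant and can be pulled out or reinserted at will). The only difference is cosmetic ordering — you integrate out the starting point first and then invoke shift-invariance of the stationary reversible path weight, whereas the paper shifts first, applies the one-step reversibility identity, relabels, and only then uses Fubini and homogeneity — so this is essentially the paper's argument in a slightly more conceptual packaging.
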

\begin{proof}
It suffices to prove that 
for all $A\in\Sigma$, $k\in\mathbb N$ and $B_{-k},\ldots, B_k\in\mathscr B(\mathsf X)$, 
\[
(s\circ \mathbb Q)\big[\big\{w(i)\in B_i,\,-k\leq i\leq k\big\}\times A\big]
= \mathbb Q\big[\big\{w(i)\in B_i,\,-k\leq i\leq k\big\}\times A\big].
\]
We have 
\begin{multline*}
(s\circ \mathbb Q)\big[\big\{w(i)\in B_i,\,-k\leq i\leq k\big\}\times A\big]
= \mathbb Q\big[s^{-1}\big\{w(i)\in B_i,\,-k\leq i\leq k\big\}\times A\big]\\
= \mathbb Q\big[\big\{w(i+1)\in B_i,\,-k\leq i\leq k\big\}\times A\big]
= \mathbb Q\big[\big\{w(i)\in B_{i-1},\,-k+1\leq i\leq k+1\big\}\times A\big]\\
= 
\frac1Z\int\limits_{\mathsf X} \mu_\mathsf X(dx)\mathbb E\Big[\mathds{1}_A(\xi)\,\Big(\sum\limits_{(x_{-k},\ldots,x_k)\in V'} c_G(x_0)\,\prod\limits_{i=1}^{k+1} p_G(x_{i-1},x_i)\,\prod\limits_{i=-1}^{-k+1} p_G(x_{i+1},x_i)\Big)\Big],
\end{multline*}
where $V'= V_G^{2k+1}\cap\big(B_{-k}\times\ldots\times B_{-2}\times (\mathsf B(x,1)\cap B_{-1})\times B_0\times\ldots \times B_k\big)$.
Note that 
\[
c_G(x_0)\,\prod\limits_{i=1}^{k+1} p_G(x_{i-1},x_i)\,\prod\limits_{i=-1}^{-k+1} p_G(x_{i+1},x_i)
=
c_G(x_1)\,\prod\limits_{i=2}^{k+1} p_G(x_{i-1},x_i)\,\prod\limits_{i=0}^{-k+1} p_G(x_{i+1},x_i).
\]
Therefore, by renaming each variable $x_{i+1}$ to $x_i$, the above integral is equal to 
\[
\frac1Z\int\limits_{\mathsf X} \mu_\mathsf X(dx)\mathbb E\Big[\mathds{1}_A(\xi)\,\Big(\sum\limits_{(x_{-k},\ldots,x_k)\in V'} c_G(x_0)\,\prod\limits_{i=1}^k p_G(x_{i-1},x_i)\,\prod\limits_{i=-1}^{-k} p_G(x_{i+1},x_i)\Big)\Big].
\]
By the Fubini theorem, this integral is equal to 
\[
\frac1Z\mathbb E\Big[\mathds{1}_A(\xi)\,\Big(\sum\limits_{(x_{-k},\ldots,x_k)\in V''} c_G(x_0)\,\prod\limits_{i=1}^k p_G(x_{i-1},x_i)\,\prod\limits_{i=-1}^{-k} p_G(x_{i+1},x_i)\,\mu_\mathsf X\big(\mathsf B(x_{-1},1)\big)\Big)\Big],
\]
where $V'' = B_{-k}\times\ldots\times B_{-2}\times B_{-1}\times B_0\times\ldots \times B_k$. Since $\mu_\mathsf X(\mathsf B(x_{-1},1)) = \mu_\mathsf X(\mathsf B(x_0,1))$, the expression equals 
\[
\frac1Z\int\limits_{\mathsf X} \mu_\mathsf X(dx)\mathbb E\Big[\mathds{1}_A(\xi)\,\Big(\sum\limits_{(x_{-k},\ldots,x_k)\in V'''} c_G(x_0)\,\prod\limits_{i=1}^k p_G(x_{i-1},x_i)\,\prod\limits_{i=-1}^{-k} p_G(x_{i+1},x_i)\Big)\Big],
\]
where $V'''=B_{-k}\times\ldots\times B_{-1}\times (\mathsf B(x,1)\cap B_0)\times B_1\times\ldots \times B_k$, which in turn is equal to $\mathbb Q\big[\big\{w(i)\in B_i,\,-k\leq i\leq k\big\}\times A\big]$. The proof is completed.
\end{proof}

\begin{lemma}\label{l:relation-Q-widehatPx}
For every $z\in\mathsf X$, 
\[
\widehat {\mathbb P}_z = \frac{1}{\mu_\mathsf X(\mathsf B(0,1))}\mathbb Q\big[\{w(0)\in \mathsf B(z,1)\}\cap\cdot\big].
\]
\end{lemma}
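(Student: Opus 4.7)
The plan is to verify the asserted identity of measures on the generating $\pi$-system of cylinder events $\{w(i)\in B_i,\,-k\leq i\leq k\}\times A$, where $k\in\mathbb N$, $A\in\Sigma$, and $B_{-k},\ldots,B_k\in\mathscr B(\mathsf X)$, and then conclude by a standard $\pi$-$\lambda$ (uniqueness of finite measures) argument. Intersecting the cylinder with $\{w(0)\in\mathsf B(z,1)\}$ simply replaces $B_0$ by $B_0':=B_0\cap\mathsf B(z,1)$, so by the definition of $\mathbb Q$ and formula~\eqref{def:mathbb-widehat-Px},
\begin{multline*}
\mathbb Q\big[\{w(0)\in\mathsf B(z,1)\}\cap\{w(i)\in B_i,\,-k\leq i\leq k\}\times A\big]\\
= \frac{1}{Z}\int_{\mathsf X}\mu_\mathsf X(dx)\,\mathbb E\Big[\mathds{1}_A(\xi)\sum_{(x_{-k},\ldots,x_k)\in V_x}c_G(x_0)\prod_{i=1}^k p_G(x_{i-1},x_i)\prod_{i=-1}^{-k} p_G(x_{i+1},x_i)\Big],
\end{multline*}
where $V_x=V_G^{2k+1}\cap\big(B_{-k}\times\cdots\times B_{-1}\times(\mathsf B(x,1)\cap B_0')\times B_1\times\cdots\times B_k\big)$.

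The main step is then to interchange the integration over $x$ with the expectation and the sum over $V_G^{2k+1}$ by Tonelli (all integrands are non-negative). The only factor depending on $x$ is the constraint $x_0\in\mathsf B(x,1)$, which is equivalent to $x\in\mathsf B(x_0,1)$ by symmetry of the metric; hence
\[
\int_{\mathsf X}\mathds{1}_{\{x_0\in\mathsf B(x,1)\}}\,\mu_\mathsf X(dx)=\mu_\mathsf X(\mathsf B(x_0,1))=\mu_\mathsf X(\mathsf B(0,1)),
\]
using the translation invariance of $\mu_\mathsf X$ on the homogeneous space $\mathsf X$ noted in Section~\ref{sec:notation}. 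Pulling out this constant and recognizing the remaining expression as the right-hand side of~\eqref{def:mathbb-widehat-Px} applied at the base point $z$ with sets $B_{-k},\ldots,B_{-1},B_0',B_1,\ldots,B_k$ yields exactly $\mu_\mathsf X(\mathsf B(0,1))\cdot\widehat{\mathbb P}_z\big[\{w(i)\in B_i,\,-k\leq i\leq k\}\times A\big]$, as required.

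I do not anticipate any real obstacle; the argument is essentially bookkeeping around one use of Tonelli plus the symmetry $x_0\in\mathsf B(x,1)\iff x\in\mathsf B(x_0,1)$. The only care needed is to justify the $\pi$-$\lambda$ step by checking that both sides define \emph{finite} measures with the same total mass: specializing the computation above to $k=0$, $B_0=\mathsf X$, $A=\Xi$, together with the isometry invariance of $\mathbb P$ and the definition of $Z$ in~\eqref{eq:graph-degree-condition}, gives $\mathbb Q[\{w(0)\in\mathsf B(z,1)\}]=\mu_\mathsf X(\mathsf B(0,1))<\infty$, which matches the total mass of the right-hand side and makes uniqueness on the generating $\pi$-system unambiguous.
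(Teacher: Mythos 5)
Your proposal is correct and follows essentially the same route as the paper: verify the identity on cylinder events via the definition of $\mathbb Q$, use Tonelli/Fubini to integrate out the constraint $x_0\in\mathsf B(x,1)$, and invoke $\mu_\mathsf X(\mathsf B(x_0,1))=\mu_\mathsf X(\mathsf B(0,1))$. The only (harmless) addition is your explicit justification of the $\pi$--$\lambda$ step, which the paper leaves implicit.
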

\begin{proof}
Let $z\in \mathsf X$. It suffices to prove that 
for $A\in\Sigma$, $k\in\mathbb N$ and $B_{-k},\ldots, B_k\in\mathscr B(\mathsf X)$, 
\begin{multline*}
\mathbb Q\big[\{w(0)\in \mathsf B(z,1)\}\cap \{w(i)\in B_i,\,-k\leq i\leq k\}\times A\big]\\
= \mu_\mathsf X\big(\mathsf B(0,1)\big)\,\widehat {\mathbb P}_z \big[\{w(i)\in B_i,\,-k\leq i\leq k\}\times A\big].
\end{multline*}
We have 
\begin{multline*}
\mathbb Q\big[\{w(0)\in \mathsf B(z,1)\}\cap \{w(i)\in B_i,\,-k\leq i\leq k\}\times A\big]\\
= 
\frac1Z\int\limits_{\mathsf X} \mu_\mathsf X(dx)\mathbb E\Big[\mathds{1}_A(\xi)\,\Big(\sum\limits_{(x_{-k},\ldots,x_k)\in V'} c_G(x_0)\,\prod\limits_{i=1}^k p_G(x_{i-1},x_i)\,\prod\limits_{i=-1}^{-k} p_G(x_{i+1},x_i)\Big)\Big],
\end{multline*}
where $V' = B_{-k}\times\ldots\times B_{-1}\times (\mathsf B(x,1)\cap B_0\cap \mathsf B(z,1))\times B_1\times\ldots \times B_k$. By the Fubini theorem, the iterated integral is equal to 
\[
\frac1Z\,\mathbb E\Big[\mathds{1}_A(\xi)\,\Big(\sum\limits_{(x_{-k},\ldots,x_k)\in V''} c_G(x_0)\,\prod\limits_{i=1}^k p_G(x_{i-1},x_i)\,\prod\limits_{i=-1}^{-k} p_G(x_{i+1},x_i)\,\mu_\mathsf X\big(\mathsf B(x_0,1)\big)\Big)\Big],
\]
where $V'' = B_{-k}\times\ldots\times B_{-1}\times (B_0\cap \mathsf B(z,1))\times B_1\times\ldots \times B_k$.
Since $\mu_\mathsf X(\mathsf B(x_0,1))=\mu_\mathsf X(\mathsf B(0,1))$,
the last expression equals $\mu_\mathsf X(\mathsf B(0,1))\,\widehat {\mathbb P}_z \big[\{w(i)\in B_i,\,-k\leq i\leq k\}\times A\big]$. The proof is completed. 
\end{proof}

\smallskip

We now have all the ingredients to prove Proposition~\ref{prop:rw-stationarity}. 
\begin{proof}[Proof of Proposition~\ref{prop:rw-stationarity}]
Fix a set $\mathcal A$ as in the statement and consider the function $\phi$ on $\mathscr B(\mathsf X)^2$ defined by 
\[
\phi(B,B') = \mathbb Q\big[\mathcal A\cap \{w(0)\in B\}\cap \{w(1)\in B'\}\big]. 
\]
If $\gamma$ is an isometry of $\mathsf X$, then
\begin{eqnarray*}
\phi(\gamma B,\gamma B') 
&= &\mathbb Q\big[\mathcal A\cap \{w(0)\in \gamma B\}\cap \{w(1)\in \gamma B'\}\big]\\
&= &\mathbb Q\big[\gamma \mathcal A\cap \gamma \{w(0)\in B\}\cap \gamma \{w(1)\in B'\}\big]\\
&= &\mathbb Q\big[\mathcal A\cap \{w(0)\in B\}\cap \{w(1)\in B'\}\big] = \phi(B,B'),
\end{eqnarray*}
where in the second equality we used invariance of $\mathcal A$ and in the third invariance of $\mathbb Q$.

Furthermore, by Lemma~\ref{l:relation-Q-widehatPx}, 
\[
\phi(\mathsf B(0,1),\mathsf X) = \mathbb Q\big[\mathcal A\cap \{w(0)\in \mathsf B(0,1)\}\big]
= \mu_\mathsf X(\mathsf B(0,1))\,\widehat {\mathbb P}_0[\mathcal A]<\infty. 
\]
Therefore, by the mass-transport principle, for every $B\in\mathscr B(\mathsf X)$, 
\begin{equation}\label{eq:Q-mtp}
\mathbb Q\big[\mathcal A\cap \{w(0)\in B\}\big] = \phi(B,\mathsf X) = \phi(\mathsf X,B)
= \mathbb Q\big[\mathcal A\cap \{w(1)\in B\}\big].
\end{equation}
Finally, we obtain by Lemma~\ref{l:relation-Q-widehatPx}, \eqref{eq:Q-mtp} and $s$-invariance of $\mathbb Q$ that  
\begin{eqnarray*}
\mu_\mathsf X(\mathsf B(0,1))\,\widehat {\mathbb P}_0[\mathcal A] 
&= & \mathbb Q\big[\mathcal A\cap \{w(0)\in \mathsf B(0,1)\}\big]
= \mathbb Q\big[\mathcal A\cap \{w(1)\in \mathsf B(0,1)\}\big]\\
&= &\mathbb Q\big[s\mathcal A\cap s\{w(1)\in \mathsf B(0,1)\}\big]
= \mathbb Q\big[s\mathcal A\cap \{w(0)\in \mathsf B(0,1)\}\big]\\
&= &\mu_\mathsf X(\mathsf B(0,1))\,\widehat {\mathbb P}_0[s\mathcal A].
\end{eqnarray*}
The proof is completed. 
\end{proof}

\section{Pivotal sets}\label{sec:pivotal}

The aim of this section is to introduce the concept of pivotal sets and to prove their exisence in the presence of unbounded connected components of opposite types. Recall the definitions of component property and type from Section~\ref{sec:notation}. 

\smallskip

Given an occupied component property $\mathcal A$, a set $B\in\mathscr B_0(\mathsf X)$ is called \emph{pivotal} for the occupied connected component of $x$ if $\omega(B)=0$ and there exists a measurable set $S=S(\omega)\subseteq B$ with $\mu_\mathsf X(S)>0$ a.s.\ such that precisely one of $(x,\omega)$ and $(x,\omega+\delta_y)$ is in $\mathcal A$, for every $y\in S$; in other words, for every $y\in S$, the connected component of $x$ in the enlarged occupied set $\mathcal O(\omega+\delta_y) = \mathcal O(\omega)\cup \mathsf B(y,1)$ has different type than the connected component of $x$ in $\mathcal O(\omega)$. 

Given a vacant component property $\mathcal A$, a set $B\in\mathscr B_0(\mathsf X)$ is called \emph{pivotal} for the vacant connected component of $x$ if precisely one of $(x,\omega)$ and $(x,\omega|_{B^c})$ is in $\mathcal A$; in other words, the connected component of $x$ in the enlarged vacant set $\mathcal V(\omega|_{B^c})$ has different type than the connected component of $x$ in $\mathcal V(\omega)$. 

\begin{lemma}[Existence of pivotals]\label{l:pivotals}
Let $\mathbf P$ be an isometry invariant probability measure on $\mathsf M(\mathsf X)$. 
Let $\mathcal Z$ be an independent Poisson point process on $\mathsf X$ with intensity $\mu_\mathsf X$. 
\begin{itemize}
\item[(a)]
Let $\mathcal A$ be an occupied component property. 
Assume that $\mathbf P$ is insertion-tolerant and with positive probability there exist unbounded occupied connected components of both types $\mathcal A$ and $\neg\mathcal A$. Then with positive probability $C_\mathcal O(0)$ is unbounded and $\mathsf B(z,\frac18)$ is pivotal for $C_\mathcal O(0)$ for some $z\in\mathcal Z$. 
\item[(b)]
Let $\mathcal A$ be a vacant component property. 
Assume that $\mathbf P$ is deletion-tolerant and with positive probability there exist unbounded vacant connected components of both types $\mathcal A$ and $\neg\mathcal A$. Then with positive probability $C_\mathcal V(0)$ is unbounded and $\mathsf B(z,2)$ is pivotal for $C_\mathcal V(0)$ for some $z\in\mathcal Z$. 
\end{itemize}
\end{lemma}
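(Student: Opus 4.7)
My plan is to prove both parts by contradiction via a Poisson chain construction; the arguments for (a) and (b) are parallel (insertion for (a), deletion for (b)), so I describe (a) in detail and indicate the adaptations for (b) at the end.

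\textbf{Upgrade to a component-level no-pivotal statement.} Suppose for contradiction that
\[
\mathbf Q\bigl[\{C_\mathcal O(0)\text{ unbounded}\}\cap\{\exists z\in\mathcal Z:\mathsf B(z,\tfrac18)\text{ pivotal for }C_\mathcal O(0)\}\bigr]=0,
\]
where $\mathbf Q=\mathbf P\otimes\mathrm{Pois}(\mu_\mathsf X)$. Isometry invariance of $\mathbf Q$ transfers this to every $x\in\mathsf X$ in place of $0$; since admitting a pivotal $\mathsf B(z,\tfrac18)$ depends only on the component $C_\mathcal O(x)$, and every unbounded occupied component has positive $\mu_\mathsf X$-volume (it contains a ball of radius $1$), Fubini upgrades the hypothesis to: $\mathbf Q$-a.s., no unbounded component of $\mathcal O$ admits any pivotal $\mathsf B(z,\tfrac18)$ with $z\in\mathcal Z$.

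\textbf{Chain construction.} Choose $R$ with $\mathbf P[E_R]>0$, where $E_R=\{$some unbounded $\mathcal A$-component $\mathcal C_1$ and some unbounded $\neg\mathcal A$-component $\mathcal C_2$ both meet $\mathsf B(0,R)\}$; measurably fix $y_j\in\mathcal C_j\cap\mathsf B(0,R)$. The set $\mathsf X_{1/8}:=\mathsf X\setminus\bigcup_{x\in\mathrm{supp}\,\omega}\mathsf B(x,\tfrac18)$ is $\mathsf X$ minus countably many radius-$\tfrac18$ balls and hence path-connected in a symmetric space; so a finite path in $\mathsf X_{1/8}$ from a neighbourhood of $y_1$ to a neighbourhood of $y_2$ exists. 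Independence of $\mathcal Z$ from $\omega$ and Poisson density estimates then give, on a subevent of $E_R$ of positive $\mathbf Q$-probability, a chain $z_1,\dots,z_n\in\mathcal Z\cap\mathsf X_{1/8}$ with pairwise disjoint $\mathsf B(z_i,\tfrac18)$, with $|z_i-z_{i+1}|\leq 1$ (so any $Y_i\in\mathsf B(z_i,\tfrac18)$ yield overlapping balls $\mathsf B(Y_i,1)$), and with $z_1,z_n$ close enough to $\mathcal C_1,\mathcal C_2$ to ensure $\mathsf B(Y_1,1)\cap\mathcal C_1\neq\emptyset$ and $\mathsf B(Y_n,1)\cap\mathcal C_2\neq\emptyset$. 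The constraint $z_i\in\mathsf X_{1/8}$ gives $\omega(\mathsf B(z_i,\tfrac18))=0$ for free.

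\textbf{Sequential insertion and contradiction.} Insert independent uniform $Y_k\in\mathsf B(z_k,\tfrac18)$ into $\omega$ to form $\tilde\omega_k=\omega+\sum_{i\leq k}\delta_{Y_i}$. Iterated insertion-tolerance gives $\mathrm{Law}(\tilde\omega_k)\ll\mathbf P$, so the component-level no-pivotal property from Step~1 persists for $\tilde\omega_k$. Pairwise disjointness and $\omega$-emptiness of the $\mathsf B(z_i,\tfrac18)$ yield $\tilde\omega_{k-1}(\mathsf B(z_k,\tfrac18))=0$, so no-pivotal applied to $z_k$ forces, for $\mu_\mathsf X$-a.e.\ $Y_k$, that the $\mathcal A$-status of every point in every unbounded component of $\mathcal O(\tilde\omega_{k-1})$ is preserved under insertion of $\delta_{Y_k}$. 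Inductively, the component containing $\mathcal C_1$ retains type $\mathcal A$ through the first $n-1$ steps (while growing by the added balls), and $\mathcal C_2$ remains a separate component of type $\neg\mathcal A$; the final insertion bridges them into a single unbounded component whose type must simultaneously equal $\mathcal A$ and $\neg\mathcal A$ --- contradiction.

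\textbf{Part (b) and main obstacle.} Part (b) is parallel with deletions replacing insertions: the geometric identity $\mathsf B(z,1)\subseteq\mathcal V(\omega|_{\mathsf B(z,2)^c})$ (since $y\in\mathsf B(z,1)$ and $x\notin\mathsf B(z,2)$ satisfy $|y-x|>1$) makes each deletion $\mathsf B(z_i,2)$ produce a vacant ball of radius $1$, the chain constant becomes $|z_i-z_{i+1}|\leq 2$, and the vacant pivotal definition carries no $\omega$-emptiness requirement, which actually simplifies Step~2; deletion-tolerance replaces insertion-tolerance throughout. I expect the main technical obstacle, in both cases, to be Step~2 --- verifying positivity of the joint event controlling chain geometry, endpoint placements against $\mathcal C_1,\mathcal C_2$, and (for (a)) the $\omega$-emptiness of the small balls along the chain. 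Once that is in place, Step~3 is a careful but routine application of (insertion-/deletion-) tolerance, whose only subtlety is tracking that each intermediate $\tilde\omega_{k-1}$ remains in the support of a law absolutely continuous with respect to $\mathbf P$.
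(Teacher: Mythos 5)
Your proposal takes a genuinely different route from the paper, and the step you yourself flag as the main obstacle is in fact a genuine gap. The claim that $\mathsf X_{1/8}=\mathsf X\setminus\bigcup_{x\in\mathrm{supp}\,\omega}\mathsf B(x,\tfrac18)$ is path-connected is false: although $\mathrm{supp}\,\omega$ is locally finite, the balls $\mathsf B(x,\tfrac18)$ may overlap and their union may contain a separating hypersurface (already in $\R^2$, points spaced $\tfrac15$ apart along a line give $\tfrac18$-balls whose union contains the line). More to the point, nothing in the hypotheses (isometry invariance plus insertion tolerance) rules out that, on the event where opposite-type unbounded components coexist, every path from $\mathcal C_1$ to $\mathcal C_2$ passes within $\tfrac18$ of $\mathrm{supp}\,\omega$ --- e.g.\ a third, ``thick'' occupied component separating them --- in which case no chain of $\omega$-empty $\tfrac18$-balls exists and Step~2 cannot be carried out. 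Since the occupied pivotal definition requires $\omega(B)=0$, you cannot drop the emptiness requirement, and you have no deletion tolerance available in part (a) to clear a corridor. There is also a secondary issue in both parts: the chain $(z_1,\dots,z_n)$ is selected as a function of $(\omega,\mathcal Z)$, so the regions you insert into or delete from are $\omega$-dependent, and insertion/deletion tolerance (stated for fixed $B\in\mathscr B_0(\mathsf X)$) does not apply directly; one must first reduce to a fixed deterministic finite family of balls on a positive-probability subevent.

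The paper avoids the connectivity problem entirely with a one-step argument. It sets $s_0=\inf\{s:\mathbf P[\mathcal E_s]>0\}$, where $\mathcal E_s$ is the event that opposite-type unbounded components lie within distance $s$ of each other, and picks $a,b$ realizing distance $\le s_0+\tfrac14$ with positive probability. Minimality of $s_0$ then does two jobs at once: it forces $\omega(\mathsf B(a,\tfrac12))=0$ (a point of $\omega$ there would put the two components at distance $\le s_0-\tfrac14$), and it forces that inserting a single point $x\in\mathsf B(a,\tfrac12)$ must flip the type of one of the two components for a.e.\ $x$ (otherwise, by insertion tolerance, opposite-type unbounded components would coexist at distance $\le(s_0-\tfrac14)_+<s_0$ with positive probability). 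So only one local modification, at a location where emptiness is automatic, is ever needed; no chain and no global connectivity statement enters. If you want to salvage your argument, you would need to either prove the required corridor exists with positive probability (which seems hard in this generality) or adopt the infimum-distance reduction, at which point the chain becomes unnecessary.
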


\begin{proof}
We begin with the proof of (a). 
For $s>0$, let $\mathcal E_s$ be the event that there exist unbounded occupied connected components of types $\mathcal A$ and $\neg\mathcal A$ at distance at most $s$ from each other, and define $s_0 = \inf\{s:\mathbf P[\mathcal E_s]>0\}$. Then there exist $a,b\in\mathsf X$ with $d_\mathsf X(a,b)\leq s_0+\frac14$ such that with positive probability, $C_\mathcal O(a)$ and $C_\mathcal O(b)$ are unbounded, $C_\mathcal O(a)$ has type $\mathcal A$ and $C_\mathcal O(b)$ has type $\neg\mathcal A$. We consider $\omega$'s from this event. 

\smallskip

Let $B = \mathsf B(a,\frac12)$. For every $x\in B$, $\mathsf B(x,1)\cap C_\mathcal O(a)\neq \emptyset$ and $d_\mathsf X\big(\mathsf B(x,1), C_\mathcal O(b)\big)\leq (s_0 - \frac14)_+$.  Since $C_\mathcal O(a)\neq C_\mathcal O(b)$ and by the definition of $s_0$, it follows that $\omega(B)=0$. 

\smallskip

Let $\omega^x = \omega+\delta_x$ and define $\mathcal O^x = \mathcal O(\omega^x) = \mathcal O(\omega)\cup\mathsf B(x,1)$. 
Note that for every $x\in B$, $d_\mathsf X\big(C_{\mathcal O^x}(a),C_{\mathcal O^x}(b)\big)\leq (s_0-\frac14)_+$. 
Assume that with positive $\mathbf P$-probability there is a measurable set $S'=S'(\omega)\subseteq B$ with $\mu_\mathsf X(S')>0$ such that for all $x\in S'$, $C_{\mathcal O^x}(a)$ has type $\mathcal A$ and $C_{\mathcal O^x}(b)$ has type $\neg\mathcal A$. 
Then with positive $\mathbf P^B$-probability, the occupied connected components of $a$ and $b$ are unbounded, have different types, and are at distance at most $(s_0-\frac14)_+$ from each other, which is impossible by the definition of $s_0$ and the absolute continuity of $\mathbf P^B$ with respect to $\mathbf P$. 
Thus, for $\mu_\mathsf X$-a.e.\ $x\in B$, either $C_{\mathcal O^x}(a)$ has different type than $C_\mathcal O(a)$ or $C_{\mathcal O^x}(b)$ has different type than $C_\mathcal O(b)$. 
It follows that either with positive $\mathbf P$-probability $\omega(B)=0$ and $\mathsf B(a,\frac1{16})$ is pivotal for $C_\mathcal O(a)$ or with positive $\mathbf P$-probability $\omega(B)=0$ and $\mathsf B(a,\frac1{16})$ is pivotal for $C_\mathcal O(b)$. 

\smallskip

All in all, by the isometry invariance of $\mathbf P$, there exists $x_*\in\mathsf X$ such that with positive $\mathbf P$-probability $C_\mathcal O(0)$ is unbounded, $\omega\big(\mathsf B(x_*,\frac12)\big) =0$ and $\mathsf B(x_*,\frac1{16})$ is pivotal for $C_\mathcal O(0)$. 
Note that for every $z\in \mathsf B(x_*,\frac1{16})$, $\mathsf B(x_*,\frac1{16})\subset\mathsf B(z,\frac18)\subset \mathsf B(x_*,\frac12)$. Hence with positive $\mathbf P$-probability, $C_\mathcal O(0)$ is unbounded and $\mathsf B(z,\frac18)$ is pivotal for $C_\mathcal O(0)$ for $\mu_\mathsf X$-a.e.\ $z\in \mathsf B(x_*,\frac1{16})$.
Since $\mathcal Z\cap \mathsf B(x_*,\frac1{16})\neq \emptyset$ with positive probability, result (a) follows. 

\smallskip

We proceed with the proof of (b). For $s>0$, let $\mathcal E_s$ be the event that there exist unbounded vacant connected components of types $\mathcal A$ and $\neg\mathcal A$ at distance at most $s$ from each other, and define $s_0 = \inf\{s:\mathbf P[\mathcal E_s]>0\}$. Then there exist $a,b\in\mathsf X$ with $d_\mathsf X(a,b)\leq s_0+\frac14$ such that with positive probability, $C_\mathcal V(a)$ and $C_\mathcal V(b)$ are unbounded, $C_\mathcal V(a)$ has type $\mathcal A$ and $C_\mathcal V(b)$ has type $\neg\mathcal A$. We consider $\omega$'s from this event. 

\smallskip

Let $\omega_x$ be the restriction of $\omega$ to $\mathsf X\setminus \mathsf B(x,2)$ and define $\mathcal V_x = \mathcal V(\omega_x)$. 
Note that inclusions $\mathsf B(a,\frac12)\subset\mathsf B(x,1)\subset \mathcal V_x$ hold for every $x\in \mathsf B(a,\frac12)$. 
Hence for every $x\in \mathsf B(a,\frac12)$, $d_\mathsf X\big(C_{\mathcal V_x}(a), C_{\mathcal V_x}(b)\big)\leq (s_0-\frac14)_+$. As in the proof of (a), we conclude from deletion tolerance of $\mathbf P$ and the definition of $s_0$ that for $\mu_\mathsf X$-a.e.\ $x\in\mathsf B(a,\frac12)$, either $C_{\mathcal V_x}(a)$ has different type than $C_\mathcal V(a)$ or $C_{\mathcal V_x}(b)$ has different type than $C_\mathcal V(b)$. It follows that either with positive $\mathbf P$-probability the set of $x\in\mathsf B(a,\frac12)$ for which $\mathsf B(x,2)$ is pivotal for $C_\mathcal V(a)$ has $\mu_\mathsf X$-measure at least $\frac12$ or with positive $\mathbf P$-probability the set of $x\in\mathsf B(a,\frac12)$ for which $\mathsf B(x,2)$ is pivotal for $C_\mathcal V(b)$ has $\mu_\mathsf X$-measure at least $\frac12$. 
All in all, by the isometry invariance of $\mathbf P$, there exists $x_*\in\mathsf X$ such that with positive $\mathbf P$-probability, $C_\mathcal V(0)$ is unbounded and the (random) set $S\subset \mathsf B(x_*,\frac12)$ of all $x$ for which $\mathsf B(x,2)$ is pivotal for $C_\mathcal V(0)$ has measure $\mu_\mathsf X(S)\geq \frac12$. 
Finally, on the latter event, Poisson point process $\mathcal Z$ intersects $S$ with positive probability, which proves (b). 
\end{proof}

\section{Proofs of Theorem~\ref{thm:Indistinguishability} and Proposition~\ref{prop:Indistinguishability-hyperbolic}}\label{sec:proof-indistinguishability}

In this section, we prove Theorem~\ref{thm:Indistinguishability} and Proposition~\ref{prop:Indistinguishability-hyperbolic}. 
We also discuss an extension of Theorem~\ref{thm:Indistinguishability} to marked point processes, which we use in Section~\ref{sec:uniqueness-monotonicity} to establish the uniqueness monotonicity (Theorem~\ref{thm:monotonicity-of-uniqueness}), see Remark~\ref{rem:Indistinguishability-extension}. 
The proof of Theorem~\ref{thm:Indistinguishability} is based on all the results of the previous sections. 
The proof of Proposition~\ref{prop:Indistinguishability-hyperbolic} is completely independent from the other results of the paper. 

\begin{proof}[Proof of Theorem~\ref{thm:Indistinguishability}]
We begin with the proof of (a). 
Assume that the statement is false, that is there exists an occupied component property $\mathcal A$ such that with positive probability there exist unbounded occupied components of both types $\mathcal A$ and $\neg \mathcal A$. By Lemma~\ref{l:number-of-infinite-clusters-occupied}, on this event the number of unbounded occupied components is infinite. Since by Lemma~\ref{l:insertion-tolerance-for-conditional}, $\mathbf P[\cdot\,|\,N_\mathcal O=\infty]$ satisfies the assumptions of the theorem, we may assume without loss of generality that the number of unbounded occupied components is infinite $\mathbf P$-almost surely. 

\smallskip

Let $\mathcal Z$ be a Poisson point processes on $\mathsf X$ with intensity $\mu_\mathsf X$ independent from $\omega$. 
By replacing $\mathcal A$ with $\neg\mathcal A$ if necessary, we may assume, by Lemma~\ref{l:pivotals}, that with positive probability $C_\mathcal O(0)$ is unbounded, has type $\mathcal A$, and $\mathsf B(z,\frac18)$ is pivotal (with respect to type $\mathcal A$) for $C_\mathcal O(0)$ for some $z\in\mathcal Z$. Denote by $\mathcal E_0$ the resulting event of positive probability.

\smallskip

Let $\mathcal Y$ be a Poisson point processes on $\mathsf X$ with intensity $\mu_\mathsf X$ independent from $\omega$ and $\mathcal Z$. By Lemmas~\ref{l:infinite-clusters-trifurcations-2} and \ref{l:tree-of-trifurcations}, there exists $r>0$ such that with positive probability $\mathcal E_0$ occurs and $C_\mathcal O(0)$ contains infinitely many $r$-trifurcations (in $\mathcal Y$). 
Note that if $y,y'\in\mathcal Y$ are $r$-trifurcation for $C_\mathcal O(0)$ and $\mathcal C_1,\ldots, \mathcal C_m$ resp.\ $\mathcal C_1',\ldots, \mathcal C_{m'}'$ are all the finitely many unbounded connected components of $C_\mathcal O(y)\setminus C_\mathcal O(y,r)$ resp.\ $C_\mathcal O(y')\setminus C_\mathcal O(y',r)$, then there exist $i$ and $j$ such that $\mathcal C_k'\subset \mathcal C_i$ for all $k\neq j$. Therefore, if some $\mathsf B(z,\frac18)$ is pivotal for $C_\mathcal O(0)$, then there exists $r$-trifurcation $y\in C_\mathcal O(0)$ such that (a) $\mathsf B(z,\frac98)$ intersects at most one connected component of $C_\mathcal O(y)\setminus C_\mathcal O(y,r)$ and (b) 
$C_\mathcal O(y)\setminus \big(C_\mathcal O(y,r)\cup\mathsf B(z,\frac98)\big)$ contains at least $3$ unbounded connected components that touch $C_\mathcal O(y,r)$.
Conditions (a) and (b) ensure that $y$ remains $r$-trifurcation even if $\omega$ is modified on $\mathsf B(z,\frac18)$, which will be crucially used later in the proof.
Together with the isometry invariance of $\mathbf P$ and the laws of $\mathcal Y$ and $\mathcal Z$, we obtain that for some $R>0$, with positive probability 
\begin{itemize}
\item
there exists $y\in\mathcal Y\cap\mathsf B(0,1)$ such that $C_\mathcal O(y)$ is unbounded, has type $\mathcal A$, and $y$ is a $r$-trifurcation for $C_\mathcal O(y)$,
\item
there exists $z\in\mathcal Z\cap \mathsf B(y,R)$ such that $\mathsf B(z,\frac18)$ is pivotal for $C_\mathcal O(y)$,
$\mathsf B(z,\frac98)$ intersects at most one connected component of $C_\mathcal O(y)\setminus C_\mathcal O(y,r)$ and 
$C_\mathcal O(y)\setminus \big(C_\mathcal O(y,r)\cup\mathsf B(z,\frac98)\big)$ contains at least $3$ unbounded connected components that touch $C_\mathcal O(y,r)$.
\end{itemize}
We fix such $R$ and denote by $\mathcal E_1$ the resulting event of positive probability. (Note that on event $\mathcal E_1$, $C_\mathcal O(y) = C_\mathcal O(0)$ by the definition of $r$-trifurcation.)

\smallskip

For $y,z\in\mathsf X$, denote by $\mathsf S(z;y)$ the set of all $x\in\mathsf B(z,\frac18)$ such that the 
connected component of $y$ in $\mathcal O(\omega|_{\mathsf B(z,\frac18)^c})$ and connected component of $y$ in $\mathcal O(\omega|_{\mathsf B(z,\frac18)^c}+\delta_x)$ have different types ($\mathcal A$ or $\neg \mathcal A$). Note that $\mathsf S(z;y)$ is $\sigma(\omega|_{\mathsf B(z,\frac18)^c})$-measurable and if $\mathsf B(z,\frac18)$ is pivotal for $C_\mathcal O(y)$, then $\mu_\mathsf X(\mathsf S(z;y))>0$ a.s.\ (here we use that $\omega(\mathsf B(z,\frac18)) =0$ to infer $\omega=\omega|_{\mathsf B(z,\frac18)^c}$). 
Let $\mathcal U(z;y)$ be the event that $\omega(\mathsf S(z;y)) = \omega(\mathsf B(z,\frac18)) = 1$. 
Then by Lemma~\ref{l:conditional-probability-positive}, there exists $\sigma>0$ such that with positive probability event $\mathcal E_1$ occurs and the point $z$ in the definition of $\mathcal E_1$ satisfies additionally that 
$\mathbf P\big[\mathcal U(z;y)\,\big|\,\omega|_{\mathsf B(z,\frac18)^c}\big]\geq \sigma$.\footnote{Note that the map $(z,\omega)\mapsto \omega|_{\mathsf B(z,\frac18)^c}$ is measurable, which follows, e.g., from the fact that the map $(z,\omega)\mapsto \gamma_z\omega$ is measurable, where $\gamma_z$ is an isometry of $\mathsf X$ with $\gamma_z(z) = 0$, c.f.\ \cite[Lemma~9.2]{LastPenrose}.} 
We fix such $\sigma$ and denote by $\mathcal E_2$ the resulting event of positive probability.

\smallskip

Consider a probability space $(\Xi,\Sigma,\mathbb P)$ on which the following independent processes are defined: (a) point measure $\omega$ with law $\mathbf P$, (b) Poisson point process $\mathcal Y$ equipped with independent $[0,1]$-uniform marks and (c) Poisson point process $\mathcal Z$.
Let $F_\mathcal O$ be the forest defined from the point processes (a) and (b) as in Section~\ref{sec:forest} and denote by $\widehat{\mathbb P}_0$ the joint law of random element $\xi\in\Xi$ and a doubly infinite lazy random walk $w$ on $F_\mathcal O$ as defined in \eqref{def:mathbb-widehat-Px}. 
For $n\in\Z$, consider the event $\mathcal G_n$ that 
\begin{itemize}
\item
$C_\mathcal O(w(n))$ is unbounded and has type $\mathcal A$;
\item
there exists $z\in\mathcal Z\cap \mathsf B(w(n),R)$ such that 
\begin{itemize}
\item
$\omega\big(\mathsf B(z,\frac18)\big)=0$ and 
$\mathbf P\big[\mathcal U(z;w(n))\,\big|\,\omega|_{\mathsf B(z,\frac18)^c}\big]\geq \sigma$,
\item
$\mathsf B(z,\frac98)$ intersects at most one connected component of $C_\mathcal O(w(n))\setminus C_\mathcal O(w(n),r)$ and 
$C_\mathcal O(w(n))\setminus \big(C_\mathcal O(w(n),r)\cup\mathsf B(z,\frac98)\big)$ contains at least $3$ unbounded connected components that touch $C_\mathcal O(w(n),r)$.
\end{itemize}
\item
the past of the random walk $\{w(n'), n'<n\}$ is contained in an unbounded connected component of $C_\mathcal O(w(n))\setminus C_\mathcal O(w(n),r)$ that does not intersect $\mathsf B(z,\frac98)$. 
\end{itemize}
Notice that $C_\mathcal O(w(0)) = C_\mathcal O(0)$ $\widehat{\mathbb P}_0$-almost surely. 
In particular, event $\mathcal E_2$ implies the first two conditions of $\mathcal G_0$ almost surely. 
Together with Corollary~\ref{cor:graph-transience}, these imply that $\widehat{\mathbb P}_0[\mathcal G_0] >0$. 
Furthermore, events $\mathcal G_n$ are invariant under diagonal actions of isometries of $\mathsf X$ and $s\mathcal G_n = \mathcal G_{n-1}$, where $s$ is the time shift defined in Proposition~\ref{prop:rw-stationarity}, hence, by Proposition~\ref{prop:rw-stationarity}, they have the same probability,  
\begin{equation}\label{eq:infGn}
\inf\limits_{n\in\Z}\mathbb {\widehat P}_0[\mathcal G_n] = 
\mathbb {\widehat P}_0[\mathcal G_0]>0.
\end{equation}
To complete the proof, we show that the above infimum is equal to $0$. 

\smallskip

Fix $\varepsilon>0$. 
Let $\mathcal A_0$ be the event that $C_\mathcal O(0)$ is unbounded and has type $\mathcal A$ and let $\mathcal A_0'$ be a local event such that $\mathbf P[\mathcal A_0\Delta \mathcal A_0']<\varepsilon$. 
Since for every $n\in\Z$, $C_\mathcal O(w(n))=C_\mathcal O(0)$ $\mathbb {\widehat P}_0$-almost surely, we have $\mathbb {\widehat P}_0[\mathcal G_n]= \mathbb {\widehat P}_0[\mathcal A_0\cap\mathcal G_n]
\leq \mathbb {\widehat P}_0[\mathcal A_0'\cap\mathcal G_n] + \varepsilon$. 
Fix $L$ such that $\mathcal A_0'$ depends only on the restriction of $\omega$ to $\mathsf B(0,L)$.
Further, let $\mathcal G_n' = \mathcal G_n\cap \{w(n)\notin \mathsf B(0, L+R+\frac18)\}$. 
Since $w$ is almost surely transient, for all $n$ large enough, 
$\mathbb {\widehat P}_0 [\mathcal G_n\setminus \mathcal G_n']<\varepsilon$. Hence for all large $n$, 
\[
\mathbb {\widehat P}_0[\mathcal G_n]
\leq \mathbb {\widehat P}_0[\mathcal A_0'\cap\mathcal G_n'] + 2\varepsilon.
\]

\smallskip

For $z\in\mathsf X$, consider event $\mathcal G_{n,z}'$ that 
\begin{itemize}\itemsep4pt
\item[(a)]
$C_\mathcal O(0)$ is unbounded and has type $\mathcal A$, 
\item[(b)]
$\omega\big(\mathsf B(z,\frac18)\big)=0$ and 
$\mathbf P \big[\mathcal U(z;0)\,\big|\,\omega|_{\mathsf B(z,\frac18)^c}\big]\geq \sigma$,
\item[(c)]
$w(n)\in\mathsf B(z,R)$ and $w(n)\notin \mathsf B(0, L+R+\frac18)$,
\item[(d)]
$\mathsf B(z,\frac98)$ intersects at most one connected component of $C_\mathcal O(w(n))\setminus C_\mathcal O(w(n),r)$ and 
$C_\mathcal O(w(n))\setminus \big(C_\mathcal O(w(n),r)\cup\mathsf B(z,\frac98)\big)$ contains at least $3$ unbounded connected components that touch $C_\mathcal O(w(n),r)$,
\item[(e)]
the past of the random walk $\{w(n'), n'<n\}$ is contained in an unbounded connected component of $C_\mathcal O(w(n))\setminus C_\mathcal O(w(n),r)$ that does not intersect $\mathsf B(z,\frac98)$.
\end{itemize}

\smallskip

Note that $\mathcal G_n' = \bigcup\limits_{z\in\mathcal Z} \mathcal G_{n,z}'$.
Let $\mathbf P'$ be the law of $\mathcal Z$. By the independence of $\mathcal Z$ and the Campbell formula (see e.g.\ \cite[Proposition~2.7]{LastPenrose}), we obtain that 
\begin{equation}\label{eq:Campbell-1}
\mathbb {\widehat P}_0[\mathcal A_0'\cap\mathcal G_n']
\leq \mathbf E'\Big[\sum\limits_{z\in\mathcal Z} 
\mathbb {\widehat P}_0[\mathcal A_0'\cap\mathcal G_{n,z}']\Big]
= \int\limits_{\mathsf X} \mu_\mathsf X(dz) \, \mathbb {\widehat P}_0[\mathcal A_0'\cap\mathcal G_{n,z}'].
\end{equation}

If $z\in\mathsf B(0,L+\frac18)$, then $\mathbf {\widehat P}_0[\mathcal A_0'\cap\mathcal G_{n,z}']=0$. 
For $z\notin\mathsf B(0,L+\frac18)$, define $\mathcal O_z = \mathcal O(\omega|_{\mathsf B(z,\frac18)^c})$ and consider event $\mathcal G_{n,z}''$ that 
\begin{itemize}\itemsep4pt
\item
$C_{\mathcal O_z}(0)$ is unbounded and has type $\mathcal A$,
\item
$\mathbf P \big[\mathcal U(z;0)\,\big|\,\omega|_{\mathsf B(z,\frac18)^c}\big]\geq \sigma$,
\item
(c)-(e) from the definition of $\mathcal G_{n,z}'$ hold. 
\end{itemize}
Note that $\mathcal G_{n,z}''$ does not depend on $\omega|_{\mathsf B(z,\frac18)}$. Indeed, if events (d) and (e) occur then for any modification of $\omega$ in $\mathsf B(z,\frac18)$, all $w(n')$, $n'<n$, remain $r$-trifurcations and their neighborhoods in $F_\mathcal O$ remain unchanged (although the neighborhood of $w(n)$ in $F_\mathcal O$ may change). Furthermore, since $\omega=\omega|_{\mathsf B(z,\frac18)^c}$ on $\mathcal G_{n,z}'$, we have inclusion $\mathcal G_{n,z}'\subset\mathcal G_{n,z}''$. Also, since $z\notin\mathsf B(0,L+\frac18)$, local event $\mathcal A_0'$ does not depend on $\omega|_{\mathsf B(z,\frac18)}$. Thus, 
\begin{eqnarray*}
\mathbb {\widehat P}_0[\mathcal A_0'\cap\mathcal G_{n,z}']
&\leq &\mathbb {\widehat P}_0\Big[\mathcal A_0', \mathcal G_{n,z}'', \mathbf P \big[\mathcal U(z;0)\,\big|\,\omega|_{\mathsf B(z,\frac18)^c}\big]\geq \sigma\Big]\\
&\leq &\frac1\sigma\,\mathbb {\widehat E}_0\Big[\mathds{1}_{\mathcal A_0'\cap \mathcal G_{n,z}''} \mathbf P \big[\mathcal U(z;0)\,\big|\,\omega|_{\mathsf B(z,\frac18)^c}\big]\Big]\\
&= &\frac1\sigma\,\mathbb {\widehat P}_0\big[\mathcal A_0', \mathcal G_{n,z}'',\mathcal U(z;0)\big].
\end{eqnarray*}
On event $\mathcal U(z;0)$, $C_{\mathcal O_z}(0)$ and $C_\mathcal O(0)$ have different types. Hence 
\[
\mathbb {\widehat P}_0\big[\mathcal A_0', \mathcal G_{n,z}'',\mathcal U(z;0)\big]
\leq 
\mathbb {\widehat P}_0\big[\mathcal A_0', \mathcal A_0^c, w(n) \in \mathsf B(z,R)\big].
\]
Thus, by the Fubini theorem and isometry invariance of $\mu_\mathsf X$,
\[
\mathbb {\widehat P}_0[\mathcal A_0'\cap\mathcal G_n']
\leq \frac1\sigma\,\int\limits_{\mathsf X} \mu_\mathsf X(dz)\,\mathbf {\widehat P}_0\big[\mathcal A_0', \mathcal A_0^c, w(n) \in \mathsf B(z,R)\big]
= \frac1\sigma\,\mu_\mathsf X\big(\mathsf B(0,R)\big)\,\mathbb {\widehat P}_0[\mathcal A_0', \mathcal A_0^c]\\
\]
All in all, 
\[
\limsup\limits_{n\to\infty} \mathbb {\widehat P}_0[\mathcal G_n]
\leq 2\varepsilon + \frac1\sigma\,\mu_\mathsf X\big(\mathsf B(0,R)\big)\,\mathbb {\widehat P}_0[\mathcal A_0', \mathcal A_0^c]
\leq 2\varepsilon + \frac1\sigma\,\mu_\mathsf X\big(\mathsf B(0,R)\big)\,\varepsilon.
\]
Since $\varepsilon>0$ is arbitrary, we arrive at contradiction with \eqref{eq:infGn}, so (a) is proven.

\smallskip

The proof of (b) is essentially the same as the proof of (a), so we only discuss the necessary changes.
Let $\omega$ be a point measure with law $\mathbf P$. 
By Lemma~\ref{l:insertion-tolerance-for-conditional}, without loss of generality, we may assume that the number of unbounded components in $\mathcal V(\omega)$ is infinite almost surely.
Let $\mathcal Z$ be a Poisson point processes on $\mathsf X$ with intensity $\mu_\mathsf X$ independent from $\omega$. 
Assuming the statement (b) is false, we find (by Lemma~\ref{l:pivotals}) a vacant component property $\mathcal A$ such that with positive probability, 
$C_\mathcal V(0)$ is unbounded, has type $\mathcal A$, and $\mathsf B(z,2)$ is pivotal (with respect to type $\mathcal A$) for $C_\mathcal V(0)$ for some $z\in\mathcal Z$; call this event $\mathcal E_0$. 

\smallskip

Let $\mathcal Y$ be a Poisson point processes on $\mathsf X$ with intensity $\mu_\mathsf X$ independent from $\omega$ and $\mathcal Z$. By Lemmas~\ref{l:infinite-clusters-trifurcations-2} and \ref{l:tree-of-trifurcations}, there exists $r>0$ such that with positive probability $\mathcal E_0$ occurs and $C_\mathcal V(0)$ contains infinitely many $r$-trifurcations (in $\mathcal Y$). 
Argueing as in (a), we observe that for some $R>0$, with positive probability, 
\begin{itemize}
\item
there exists $y\in\mathcal Y\cap\mathsf B(0,1)$ such that $C_\mathcal V(y)$ is unbounded, has type $\mathcal A$, and $y$ is a $r$-trifurcation for $C_\mathcal V(y)$, and
\item
there exists $z\in\mathcal Z\cap \mathsf B(y,R)$ such that $\mathsf B(z,2)$ is pivotal for $C_\mathcal V(y)$,
$\mathsf B(z,3)$ intersects at most one connected component of $C_\mathcal V(y)\setminus C_\mathcal V(y,r)$ and 
$C_\mathcal V(y)\setminus \big(C_\mathcal V(y,r)\cup\mathsf B(z,3)\big)$ contains at least $3$ unbounded connected components that touch $C_\mathcal V(y,r)$.
\end{itemize}
Call this event $\mathcal E_1$ and note that $C_\mathcal O(y) = C_\mathcal O(0)$ on $\mathcal E_1$, by the definition of $r$-trifurcation.

\smallskip

Finally, by Lemma~\ref{l:conditional-probability-positive}, there exists $\sigma>0$ such that with positive probability, $\mathcal E_1$ occurs and 
$\mathbf P\big[\omega(\mathsf B(z,2))=0\,\big|\,\omega|_{\mathsf B(z,2)^c}\big]\geq \sigma$, where $z$ is the same as in the definition of $\mathcal E_1$.

\smallskip

Consider a probability space $(\Xi,\Sigma,\mathbb P)$ on which the following independent processes are defined: (a) point measure $\omega$ with law $\mathbf P$, (b) Poisson point process $\mathcal Y$ equipped with independent $[0,1]$-uniform marks and (c) Poisson point process $\mathcal Z$.
Let $F_\mathcal V$ be the forest defined from the point processes (a) and (b) as in Section~\ref{sec:forest} and denote by $\widehat{\mathbb P}_0$ the joint law of random element $\xi\in\Xi$ and a doubly infinite lazy random walk $w$ on $F_\mathcal V$ as defined in \eqref{def:mathbb-widehat-Px}. 
For $n\in\Z$, consider the event $\mathcal G_n$ that 
\begin{itemize}
\item
$C_\mathcal V(w(n))$ is unbounded and has type $\mathcal A$;
\item
there exists $z\in\mathcal Z\cap \mathsf B(w(n),R)$ such that 
\begin{itemize}
\item
$C_{\mathcal V(\omega|_{\mathsf B(z,2)^c})}(w(n))$ has type $\neg \mathcal A$ and 
$\mathbf P\big[\omega(\mathsf B(z,2))=0\,\big|\,\omega|_{\mathsf B(z,2)^c}\big]\geq \sigma$,
\item
$\mathsf B(z,3)$ intersects at most one connected component of $C_\mathcal V(w(n))\setminus C_\mathcal V(w(n),r)$ and 
$C_\mathcal V(w(n))\setminus \big(C_\mathcal V(w(n),r)\cup\mathsf B(z,3)\big)$ contains at least $3$ unbounded connected components that touch $C_\mathcal V(w(n),r)$.
\end{itemize}
\item
the past of the random walk $\{w(n'), n'<n\}$ is contained in an unbounded connected component of $C_\mathcal V(w(n))\setminus C_\mathcal V(w(n),r)$ that does not intersect $\mathsf B(z,3)$. 
\end{itemize}
As in the proof of (a), we obtain from Proposition~\ref{prop:rw-stationarity} that
$\inf\limits_{n\in\Z}\mathbb {\widehat P}_0[\mathcal G_n] = \mathbb {\widehat P}_0[\mathcal G_0]>0$.

\smallskip

Now, modulo the above modifications to the proof of (a), we prove that $\lim_{n\to\infty}\mathbb {\widehat P}_0[\mathcal G_n]=0$ in essentially the same way as in the proof of (a). 
Let us just remark that for the natural analogues of the events $\mathcal A_0$, $\mathcal A_0'$ and $\mathcal G_{n,z}'$ from the proof of (a), we estimate the probability of event $\mathcal A_0'\cap\mathcal G_{n,z}'$ from above by $1/\sigma$ multiplied by the probability of the intersection of events (1) $\mathcal A_0'$, (2) $C_{\mathcal V(\omega|_{\mathsf B(z,2)^c})}(0)$ has type $\neg \mathcal A$, (3) $\omega(\mathsf B(z,2))=0$ and (4) $w(n)\in \mathsf B(z,R)$. 
Since on this event $\omega|_{\mathsf B(z,2)^c}=\omega$, it implies that $\mathcal A_0^c$ occurs, and one concludes as in (a). We omit further details. 
\end{proof}

\begin{remark}\label{rem:indistinguishability-graphs}
The principal difference of our approach from the one of Lyons and Schramm in \cite{LS-Indistinguishability} is that we use a reversible random walk on the forest of trifurcations, which is automatically transient. The same idea can be naturally applied in the setting of \cite{LS-Indistinguishability}. Indeed, consider an automorphism-invariant insertion-tolerant percolation on a transitive unimodular graph and let $F$ be the forest of trifurcations (cf.\ \cite[p.\ 1352]{BLPS-critical}). Conditioned on vertex $o$ being a trifurcation, let $w$ be a two-sided reversible simple random walk on $F$ with $w(0)=o$ and let $e_n$ be the edge selected uniformly and independently from all the edges at distance at most $R$ from $w(n)$. 
In the notation of the proof of \cite[Theorem~3.3]{LS-Indistinguishability}, let $\mathscr G_n$ be the event that (a) $C(w(n))$ is infinite and has type $\mathscr A$, (b) $e_n$ is pivotal for $C(w(n))$ and $Z(e_n)\geq \sigma$ and (c) $e_n$ intersects at most one of connected components of $C(w(n))\setminus\{w(n)\}$ and the past of the random walk $\{w(n'), n'<n\}$ is contained in an infinite component of $C(w(n))\setminus\{w(n)\}$ that does not intersect $e_n$. The probability of event $\mathscr G_n$ does not depend on $n$ and is positive. One can use these events instead of events $\mathscr B_n$ (which only differ from $\mathscr G_n$ in part (c), see just below \cite[(3.7)]{LS-Indistinguishability}) in the proof of \cite[Theorem~3.3]{LS-Indistinguishability}. 
The advantage of using the random walk on $F$ rather than a nearest-neighbor random walk on $C(o)$ is that the former is obviously transient. Thus, one can prove indistinguishability of infinite clusters without needing to know their transience. 
\end{remark}

\smallskip

For our applications, it is useful to have the following generalization of Theorem~\ref{thm:Indistinguishability} to marked point processes resp.\ to tuples of point processes; cf.\ \cite[Remark~3.4]{LS-Indistinguishability}. Its proof follows the proof of Theorem~\ref{thm:Indistinguishability} with only minor adjustments. 

\begin{remark}\label{rem:Indistinguishability-extension}
Let $\mathbf P$ be a probability measure on $\mathsf M(\mathsf X\times[0,1])\times\mathsf M(\mathsf X)$. It is insertion-tolerant (resp.\ deletion-tolerant) if for every $B\in\mathscr B_0(\mathsf X)$ the law of $(\widetilde\omega+\delta_{(X,U)},\omega')$ (resp.\ the law of $(\widetilde\omega|_{B^c\times[0,1]},\omega')$) is absolutely continuous with the law $\mathbf P$ of $(\widetilde\omega,\omega')$, where $(X,U)$ is independent from $(\widetilde\omega,\omega')$, $X$ is uniformly distributed on $B$ and $U$ is uniformly distributed on $[0,1]$. A set $\mathcal A\in\mathscr B(\mathsf X)\otimes\mathscr M(\mathsf X\times[0,1])\otimes\mathscr M(\mathsf X)$ is an occupied (resp.\ vacant) component property if $(x,\widetilde\omega,\omega')\in\mathcal A$ implies that $(x',\widetilde\omega,\omega')\in\mathcal A$ for all $x'$ connected to $x$ in $\mathcal O(\omega)$ (resp.\ $\mathcal V(\omega)$), where $\omega = \sum_{i\geq 1}\delta_{x_i}$ if $\widetilde\omega= \sum_{i\geq 1}\delta_{(x_i,u_i)}$. The statement of Theorem~\ref{thm:Indistinguishability} remains true for these generalizations. 
\end{remark}

\begin{proof}[Proof of Proposition~\ref{prop:Indistinguishability-hyperbolic}]
By \cite[Theorem~4.5.4]{Ratcliffe}, there exists an isometry from $\mathbb H^d$ to the unit ball of Euclidean space $\R^d$ so that the image of any ball in $\mathbb H^d$ is a Euclidean ball. 
On the other hand, by \cite[Proposition~5.4]{MeesterRoy}, if $\mathcal B$ is a collection of balls in Euclidean space $\R^d$, then  
the number of connected components in $S\setminus\big(\bigcup_{B\in\mathcal B}B\big)$ is dominated by $C_{S,d}\, k^d$, where $k$ is the number of balls from $\mathcal B$ intersecting $S\subset\R^d$. Because of the isometry, the same bound holds for the number of connected components of $\mathcal V\cap S$ for $S\subset\mathsf X$. Finally, the number of balls intersecting $\mathsf B(0,1)$ is precisely $\omega(\mathsf B(0,2))$. The result follows from the moment assumption. 
\end{proof}

\section{Proof of Theorem~\ref{thm:monotonicity-of-uniqueness}}\label{sec:uniqueness-monotonicity}

We begin with the proof of part (a). Let $\mathbf P$ be the law of a Poisson point measure $\widetilde\omega=\sum_{i\geq 1}\delta_{(x_i,u_i)}$ on $\mathsf X\times[0,1]$ with intensity $\lambda_2\mu_\mathsf X\otimes\mathrm{Leb}_{[0,1]}$. Then $\omega = \sum_{i\geq 1}\delta_{x_i}$ is a Poisson point measure on $\mathsf X$ with intensity $\lambda_2\mu_\mathsf X$ and $\omega' =\sum_{i\geq1\,:\,u_i\leq \lambda_1/\lambda_2}\delta_{x_i}$ is a Poisson point measure on $\mathsf X$ with intensity $\lambda_1\mu_\mathsf X$.
Define $\mathcal O_1 = \mathcal O(\omega')$ and $\mathcal O_2 = \mathcal O(\omega)$. 
Note that $\mathcal O_1\subseteq\mathcal O_2$. 

Let $\mathcal A\in\mathscr B(\mathsf X)\otimes\mathscr M(\mathsf X\times[0,1])$ be such that $(x,\widetilde\omega)\in\mathcal A$ if $C_{\mathcal O_2}(x)$ contains an unbounded connected component of $\mathcal O_1$. Note that $\mathcal A$ is an occupied component property in the sense of Remark~\ref{rem:Indistinguishability-extension}, which is invariant under the action of isometries of $\mathsf X$ (here $\gamma\widetilde\omega = \sum_{i\geq 1}\delta_{(\gamma x_i,u_i)}$). 
By Theorem~\ref{thm:Indistinguishability} and Remark~\ref{rem:Indistinguishability-extension}, either all unbounded components of $\mathcal O_2$ have type $\mathcal A$ or none of them has type $\mathcal A$ $\mathbf P$-almost surely. 
Since by assumption $N_{\mathcal O_1}=1$ almost surely and $\mathcal O_1\subseteq\mathcal O_2$, at least one of the infinite components of $\mathcal O_2$ has type $\mathcal A$ almost surely. Thus, all of them do. Hence $N_{\mathcal O_2}=1$ and (a) is proven. 

\smallskip

The proof of (b) is similar. Let $\mathbf P$ be the law of a Poisson point measure $\omega$ on $\mathsf X$ with intensity $\lambda_1\mu_\mathsf X$ and let $\omega'$ be an independent Poisson point measure on $\mathsf X$ with intensity $(\lambda_2-\lambda_1)\mu_\mathsf X$. Note that $\omega+\omega'$ is a Poisson point measure on $\mathsf X$ with intensity $\lambda_2\mu_\mathsf X$. Let $\mathcal V_1 = \mathcal V(\omega)$ and $\mathcal V_2 = \mathcal V(\omega+\omega')$. Note that $\mathcal V_1\supseteq\mathcal V_2$. 

Let $\mathcal A\in\mathscr B(\mathsf X)\otimes\mathscr M(\mathsf X)^{\otimes 2}$ be such that $(x,\omega,\omega')\in\mathcal A$ if $C_{\mathcal V_1}(x)$ contains an unbounded connected component of $\mathcal V_2$. Note that $\mathcal A$ is a vacant component property in the sense of Remark~\ref{rem:Indistinguishability-extension}, which is invariant under the action of isometries of $\mathsf X$. By Theorem~\ref{thm:Indistinguishability} and Remark~\ref{rem:Indistinguishability-extension}, either all unbounded components of $\mathcal V_1$ have type $\mathcal A$ or none of them has type $\mathcal A$ $\mathbf P$-almost surely. Argueing as in the proof of (a), we conclude that $N_{\mathcal V_1}=1$.
\qed

\section{Proof of Theorem~\ref{thm:connectivity}}\label{sec:connectivity}

Let $\mathcal S'$ be a random closed or open subset of $\mathsf X$ with an isometry invariant law and let $\{X_n\}_{n\geq 0}$ be an independent from $\mathcal S'$ random walk on $\mathsf X$ with $X_0=0$ and $X_n$ independently uniformly distributed in $\mathsf B(X_{n-1},1)$. By the ergodic theorem and L\'evy's $0$-$1$ law (precisely as in the proof of \cite[Lemma~8.2]{HaggstromJonasson}), the limit 
\begin{equation}\label{eq:frequency}
\lim\limits_{n\to\infty}\frac1n\sum\limits_{i=1}^n\mathds{1}_{\mathcal S'}(X_i)
\end{equation}
almost surely exists and only depends on $\mathcal S'$. 

For $x\in\mathsf X$, the component frequency of $C_\mathcal S(x)$ is defined (as in \cite[Definition~8.3]{HaggstromJonasson}) by 
\[
\lim\limits_{n\to\infty}\frac1n\sum\limits_{i=1}^n\mathds{1}_{C_\mathcal S(x)}(X_i)
\]
if the limit almost surely exists and does not depend on the random walk. We claim that if $\mu_\mathsf X(\partial\mathcal S)=0$ almost surely (which is the case if $\mathcal S$ is the occupied or vacant set of a Boolean model), then every connected component in $\mathcal S$ has a well defined component frequency. Indeed, the proof is essentially the same as that of \cite[Theorem~8.4]{HaggstromJonasson}. Let $\mathcal Z$ be an independent Poisson point process on $\mathsf X\times\R_+$ with intensity $\mu_\mathsf X\otimes\mathrm{Leb}$. For a connected component $\mathcal C$ in $\mathcal S$, we denote by $\mathcal Z_\mathcal C$ the set of all $x\in\mathcal C$ such that $(x,u)\in\mathcal Z$ for some $u>0$. Note that the closures of $\mathcal Z_\mathcal C$ and $\mathcal C$ coincide. 

To each $\mathcal Z_\mathcal C$, we attach an independent mark $\zeta_\mathcal C'$ equal to $1$ with probabilty $\frac12$ or $0$ otherwise, and denote by $\mathcal S'$ the closure of all $\mathcal Z_\mathcal C$'s with marks equal to $1$. Then the limit in \eqref{eq:frequency} exists and only depends on $\mathcal S'$. Now, fix $x\in\mathsf X$ and consider the marks $\zeta_\mathcal C''$ with 
$\zeta_{C_\mathcal S(x)}'' = 1 - \zeta_{C_\mathcal S(x)}'$ and $\zeta_\mathcal C'' = \zeta_\mathcal C'$ if $\mathcal C\neq C_\mathcal S(x)$ and the corresponding subset $\mathcal S''$. Note that $\mathcal S'$ and $\mathcal S''$ have the same distribution, hence the limit \eqref{eq:frequency} exists also with $\mathcal S'$ replaced by $\mathcal S''$ and depends only on $\mathcal S''$. Thus, the limit
\[
\Big|\lim\limits_{n\to\infty}\frac1n\sum\limits_{i=1}^n\big(\mathds{1}_{\mathcal S'}(X_i)-\mathds{1}_{\mathcal S''}(X_i)\big)\Big|
=
\lim\limits_{n\to\infty}\frac1n\sum\limits_{i=1}^n\big|\mathds{1}_{\mathcal S'}(X_i)-\mathds{1}_{\mathcal S''}(X_i)\big|
=\lim\limits_{n\to\infty}\frac1n\sum\limits_{i=1}^n\mathds{1}_{C_\mathcal S(x)}(X_i)
\]
exists and is independent of the random walk, 
where the first equality holds, since almost surely either $\mathcal S'\subseteq \mathcal S''$ or $\mathcal S''\subseteq \mathcal S'$, and the second, since $\mu_\mathsf X(\partial C_\mathcal S(x))=0$ almost surely.

\smallskip

We proceed with the proof of part (a) and omit the proof of part (b), since it is essentially the same. 
By Theorem~\ref{thm:Indistinguishability}, all unbounded occupied components have the same component frequencies almost surely, and it equals to a constant $c\in[0,1]$ almost surely by the ergodicity assumption. If $c>0$, then a similar ``glueing argument'' as in the proof of part (a) of Lemma~\ref{l:number-of-infinite-clusters-occupied} (using the assumption $N_\mathcal O=\infty$) allows to conclude that with positive probability there exists an unbounded component with component frequency $\geq 2c\neq c$, which is impossible. Thus, $c=0$. Also, the component frequency of all bounded occupied components must be $0$, since otherwise a similar glueing argument would prove the existence of an unbounded component with positive frequency. All in all, the component frequency of $C_\mathcal O(0)$ is almost surely $0$. Thus, by the bounded convergence theorem, 
\[
0 = \mathsf E\Big[\lim\limits_{n\to\infty}\frac1n\sum\limits_{i=1}^n\mathds{1}_{C_\mathcal O(0)}(X_i)\Big]=\lim\limits_{n\to\infty}\frac1n\sum\limits_{i=1}^n\mathsf P\big[X_i\in C_\mathcal O(0)\big]
\geq \inf\limits_{x,x'\in\mathsf X}\tau_\mathcal O(x,x').
\]
The proof is completed. \qed

\section{Poisson-Boolean model on random subsets of $\mathsf X$}\label{sec:percolation}

Let $\mathcal S$ be a random closed or open subset of $\mathsf X$ and let $\eta$ be an independent Poisson point measure on $\mathsf X\times\R_+$ with intensity $\mu_\mathsf X\otimes\mathrm{Leb}$. If $\eta = \sum_{i\geq 1}\delta_{(x_i,u_i)}$, we consider the Poisson point measure $\eta_\lambda = \sum_{i\geq 1,\,u_i\leq\lambda}\delta_{x_i}$ on $\mathsf X$ with intensity $\lambda\mu_\mathsf X$ and denote by $\mathcal Z_\lambda$ its support. Note that $\mathcal Z_{\lambda_1}\subseteq\mathcal Z_{\lambda_2}$ for all $\lambda_1<\lambda_2$. 

Consider the random graph $G_{\mathcal S,\lambda}$ with vertex set $V_{\mathcal S,\lambda} = \mathcal Z_\lambda\cap\mathcal S$ and edge set $E_{\mathcal S,\lambda}$ consisting of all pairs of vertices $z,z'$ whose mutual distance within $\mathcal S$ is at most $2$. In this section, we are interested in the existence of infinite connected components in $G_{\mathcal S,\lambda}$. 

\begin{theorem}\label{thm:percolation}
Assume that the law of $\mathcal S$ is invariant under isometries of $\mathsf X$ and the expected number of connected components in $\mathcal S\cap\mathsf B(0,1)$ is finite. Let $r>0$. 
Let $\mathcal Y$ be the support of a simple point process on $\mathsf X$ with an isometry invariant law and independent from $\mathcal S$ and $\eta$.

Then almost surely, for every $r$-trifurcation $y\in\mathcal Y$ for $\mathcal S$, the restriction of $G_{\mathcal S,\lambda}$ to $C_\mathcal S(y)$ contains an unbounded connected component for all $\lambda$ large enough. 
\end{theorem}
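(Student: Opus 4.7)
My plan is to embed the forest $F_{\mathcal S}$ of $r$-trifurcations from Section~\ref{sec:forest} into the Boolean graph $G_{\mathcal S,\lambda}$ for large $\lambda$, and then conclude percolation on $C_{\mathcal S}(y_0)$ by a comparison with Bernoulli percolation on $F_{\mathcal S}$. Under the standing hypotheses $F_{\mathcal S}$ is almost surely a forest of infinite trees of minimum degree at least $3$ (Lemma~\ref{l:graph-forest}, Corollary~\ref{cor:graph-transience}); if $y_0\in\mathcal Y$ is an $r$-trifurcation, its component $T$ in $F_{\mathcal S}$ is contained in $C_{\mathcal S}(y_0)$, is infinite, and inherits this min-degree bound.

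The argument proceeds in three steps. First, using that $\mathsf B(y,1)\subset\mathcal S$ for every trifurcation $y$ (Definition~\ref{def:r-trifurcation}), I will show that for $\lambda$ large each vertex $y\in V_T$ hosts a ``hub'' cluster $K(y)\subset\mathsf B(y,1)$ of points of $\mathcal Z_\lambda$ of pairwise $\mathcal S$-distance at most $2$, present with probability tending to $1$. Second, for each forest edge $\{y,y'\}$ I fix a shortest path in $\mathcal S$ from $y$ to $y'$: it has finite length, and every one of its points admits a small neighbourhood in $\mathcal S$ of positive volume (in the occupied case because every point of $\mathcal O$ lies in a unit Boolean ball, in the vacant case because $\mathcal V$ is open), so by compactness the path is covered by finitely many balls of uniform positive $\mathcal S$-volume; for $\lambda$ large each such ball contains a point of $\mathcal Z_\lambda$, and consecutive ones are joined by an edge of $G_{\mathcal S,\lambda}$, hence $K(y)\leftrightarrow K(y')$ with probability going to $1$ in $\lambda$. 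Third, calling an edge of $T$ ``open'' if this Boolean chain exists, I will compare the open-edge process to supercritical Bernoulli bond percolation on $T$: as $T$ has min degree $\geq 3$ its critical parameter $p_c(T)<1$, and after noting that the open-edge events have bounded-range dependence (the Poisson points witnessing different edges are chosen from essentially disjoint spatial neighbourhoods), a Liggett--Schonmann--Stacey type stochastic domination reduces the problem to making the per-edge probability exceed $p_c(T)$.

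The main obstacle will be to force the per-edge connection probability to be uniformly close to $1$ along an infinite sub-forest. A priori $d_{\mathcal S}(y,y')$ along an edge of $F_{\mathcal S}$ can be unbounded, and the geodesic may traverse regions of $\mathcal S$ of arbitrarily small thickness, so the radii produced by the compactness argument of Step~2 need not be bounded away from $0$. To handle this I plan to pre-thin $T$ to an infinite sub-forest of ``good'' edges (bounded length and bounded-below tube thickness), using the mass-transport principle together with the assumed finite expectation of the number of components of $\mathcal S\cap\mathsf B(0,1)$ to show that the good vertices have enough good neighbours for an infinite sub-tree of min-degree $\geq 2$ to survive. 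Once this thinned tree is in place, the Bernoulli comparison yields an unbounded cluster in $C_{\mathcal S}(y_0)$ with positive probability at some $\lambda<\infty$; monotonicity in $\lambda$ combined with a Kolmogorov $0$--$1$ law then upgrades this to ``for all $\lambda$ large enough'' almost surely, as required. Theorem~\ref{thm:percolation-intro} will follow after applying Lemma~\ref{l:infinite-clusters-trifurcations-2} to guarantee a trifurcation in every unbounded component.
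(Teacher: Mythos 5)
Your overall strategy --- Bernoulli domination on the trifurcation forest --- is genuinely different from the paper's, and it has gaps that I do not see how to close along the lines you sketch. The most serious one is the pre-thinning step. First, it is not established that for some \emph{fixed} cutoff $M$ (on edge length and tube thickness) the ``good'' sub-forest inside $C_{\mathcal S}(y_0)$ is infinite at all: the $\mathcal S$-distance between consecutive trifurcations and the thickness of the connecting tubes could degenerate along every ray emanating from $y_0$, so the increasing union over $M$ being the whole tree does not produce a single $M$ that works. Second, even granting an infinite good sub-tree, ``min-degree $\geq 2$'' is not enough: a bi-infinite path has minimum degree $2$ and $p_c=1$, so the comparison with supercritical Bernoulli percolation collapses. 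You would need the thinned tree to retain branching (minimum degree $\geq 3$, or expected degree $>2$ plus an invariance argument) --- and proving \emph{that} by mass transport is essentially the whole content of the theorem. Two further problems: the Liggett--Schonmann--Stacey domination needs a uniformly bounded dependency degree, whereas the degrees of $F_{\mathcal S}$ are a.s.\ finite (Lemma~\ref{l:graph-degrees}) but not uniformly bounded; and your endgame delivers only ``with positive probability, at some finite $\lambda$, the cluster of $y_0$ percolates,'' while the theorem asserts ``almost surely, for \emph{every} trifurcation, for all large $\lambda$.'' No ergodicity is assumed, so there is no $0$--$1$ law to invoke, and even with ergodicity the quantifier over all trifurcations does not follow.

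The paper avoids every uniform estimate by a soft degree-counting contradiction in the spirit of Benjamini--Lyons--Peres--Schramm. Let $\mathcal Y_1$ be the set of trifurcations whose cluster never percolates for any $\lambda$, and for each $\lambda$ keep an edge $\{y,y'\}$ of $F_{\mathcal S}$ iff some $z,z'\in\mathcal Z_\lambda$ within $\mathcal S$-distance $1$ of $y$ resp.\ $y'$ lie in one component of $G_{\mathcal S,\lambda}$. Every $y\in\mathcal Y_1$ then sits in a \emph{finite} tree of this sub-forest for every $\lambda$, so transporting $\mathrm{deg}^\lambda(y)/|K^\lambda(y)|$ from $y$ to each vertex of its component shows the expected degree on $\mathcal Y_1\cap\mathsf B(0,1)$ is at most $2$ for all $\lambda$; but each fixed edge is eventually retained, so by monotone convergence this expectation tends to at least $3$. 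Hence $\mathcal Y_1=\emptyset$ a.s., which is exactly the ``for every trifurcation, for all large $\lambda$'' statement. If you want to salvage your write-up, I would replace Steps~3--5 by this argument; your Steps~1--2 (local hubs and chaining along a path in $\mathcal S$, using $\mathsf B(y,1)\subset\mathcal S$ and compactness for each fixed edge) are exactly what is needed to justify the one assertion the paper leaves implicit, namely $\lim_{\lambda\to\infty}\mathrm{deg}^\lambda_{\mathcal S}(y)\geq 3$.
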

Theorem~\ref{thm:percolation-intro} directly follows from Theorem~\ref{thm:percolation}, Lemma~\ref{l:infinite-clusters-trifurcations-2} and Theorem~\ref{thm:Indistinguishability}. Indeed, if $\mathcal S$ is the occupied set of an isometry invariant insertion-tolerant Boolean model or the vacant set of an isometry invariant deletion-tolerant Boolean model with almost surely infinitely many unbounded components, then by Lemma~\ref{l:infinite-clusters-trifurcations-2} and Theorem~\ref{thm:percolation}, for every unbounded component of $\mathcal S$, the restriction of $G_{\mathcal S,\lambda}$ to this component contains an infinite connected component for all large enough $\lambda$. 
By Theorem~\ref{thm:Indistinguishability} and Remark~\ref{rem:Indistinguishability-extension}, almost surely for each $\lambda$, either the restriction of $G_{\mathcal S,\lambda}$ to every unbounded component of $\mathcal S$ contains an infinite component or to none of them. Finally, the existence of (non-random) $\lambda_*$ follows from the ergodicity assumption.

\begin{proof}[Proof of Theorem~\ref{thm:percolation}]
The proof is inspired by \cite[Section~4]{BLPS-critical}. 
Denote by $\mathcal Y_1$ the set of all $r$-trifurcations $y\in\mathcal Y$, such that the restriction of $G_{\mathcal S,\lambda}$ to $C_\mathcal S(y)$ consists only of finite components for all $\lambda$, and assume that with positive probability $\mathcal Y_1\neq\emptyset$; in particular, with positive probability $\mathcal Y_1\cap\mathsf B(0,1)\neq\emptyset$. 

\smallskip

Recall the definition of random forest $F_\mathcal S$ from Section~\ref{sec:forest}. By Lemma~\ref{l:infinite-clusters-trifurcations-2}, on event $\mathcal Y_1\neq\emptyset$, $F_\mathcal S$ is non-empty and every vertex of $F_\mathcal S$ has degree at least $3$. For each $\lambda$, consider the random subforest $F_\mathcal S^\lambda$ of $F_\mathcal S$ obtained by retaining every edge $\{y,y'\}$ of $F_\mathcal S$ if there exist $z$ and $z'$ from a same connected component of $G_{\mathcal S,\lambda}$, such that $z$ is within distance $1$ from $y$ in $\mathcal S$ and $z'$ is within distance $1$ from $y'$ in $\mathcal S$. Note that every $y\in\mathcal Y_1$ is in a finite tree of $F_\mathcal S^\lambda$ for all $\lambda$. 

\smallskip

Denote by $\mathrm{deg}_\mathcal S^\lambda(y)$ the degree of $y$ in $F_\mathcal S^\lambda$ and by $K^\lambda_\mathcal S(y)$ the connected component of $y$ in $F_\mathcal S^\lambda$ and consider the random function $m^\lambda:\mathcal Y_1\times\mathcal Y_1\to\R_+$ defined by 
\[
m^\lambda(y,y') = \frac{\mathrm{deg}_\mathcal S^\lambda(y)}{|K^\lambda_\mathcal S(y)|} 
\]
if $y'\in K^\lambda_\mathcal S(y)$ and $m^\lambda(y,y')=0$ otherwise.
Note that for all $y\in\mathcal Y_1$, 
\[
\sum\limits_{y'\in \mathcal Y_1}m^\lambda(y,y')=\mathrm{deg}_\mathcal S^\lambda(y)
\]
and 
\[
\sum\limits_{y'\in \mathcal Y_1}m^\lambda(y',y)=\frac{1}{|K^\lambda_\mathcal S(y)|}\,\sum\limits_{y'\in K^\lambda_\mathcal S(y)}\mathrm{deg}_\mathcal S^\lambda(y')
= \frac{2(|K^\lambda_\mathcal S(y)|-1)}{|K^\lambda_\mathcal S(y)|}
\leq 2.
\]
By the mass-transport principle (Lemma~\ref{l:mtp}), 
\begin{eqnarray*}
\mathsf E\Big[\sum\limits_{y\in \mathcal Y_1\cap\mathsf B(0,1)}\mathrm{deg}_\mathcal S^\lambda(y)\Big]
&= 
&\mathsf E\Big[\sum\limits_{y\in \mathcal Y_1\cap\mathsf B(0,1)}\sum\limits_{y'\in \mathcal Y_1}m^\lambda(y,y')\Big]
=\mathsf E\Big[\sum\limits_{y\in \mathcal Y_1\cap\mathsf B(0,1)}\sum\limits_{y'\in \mathcal Y_1}m^\lambda(y',y)\Big]\\
&\leq &2\,\mathsf E\big[|\mathcal Y_1\cap\mathsf B(0,1)|\big].
\end{eqnarray*}
On the other hand, since $\lim_{\lambda\to\infty}\mathrm{deg}^\lambda_\mathcal S(y)\geq 3$, by the monotone convergence theorem, 
\[
\lim\limits_{\lambda\to\infty}\mathsf E\Big[\sum\limits_{y\in \mathcal Y_1\cap\mathsf B(0,1)}\mathrm{deg}_\mathcal S^\lambda(y)\Big]
\geq 3\,\mathsf E\big[|\mathcal Y_1\cap\mathsf B(0,1)|\big].
\]
This is a contradiction. Thus, $\mathcal Y_1=\emptyset$ almost surely and the proof is completed. 
\end{proof}

\begin{remark}
It can be shown that all infinite components of $G_{\mathcal S,\lambda}$ are transient almost surely (using similar ideas as in Section~\ref{sec:transience}). Then one could use a lazy nearest-neighbor random walk on $G_{\mathcal S,\lambda}$ instead of the random walk on the forest $F_\mathcal S$ for a proof of  Theorem~\ref{thm:Indistinguishability}. In this case, instead of the events $\mathcal G_n$ one could consider events that are more similar to the original events $\mathscr B_m$ in \cite{LS-Indistinguishability} (see just below (3.7) there). 
\end{remark}

\section{Transience of unbounded occupied components}\label{sec:transience}

Let $G=(V_G,E_G)$ be a random unoriented graph with vertices in $\mathsf X$, as defined in Section~\ref{sec:rw}.
In this section, we study transience of infinite connected components of $G$. 

\smallskip

For $x\in V_G$ and $r>0$, we denote by $C_G(x)$ the connected component (cluster) of $x$ in $G$ and by $C_G(x,r)$ the connected component of $x$ in the maximal subgraph of $G$ with the vertex set $V_G\cap \mathsf B(x,r)$. We say that $y\in V_G$ is $r$-\emph{trifurcation} if $C_G(y)\setminus C_G(y,r)$ contains at least $3$ infinite connected components. 

\begin{theorem}\label{thm:rw-transience}
Let $G$ be a random unoriented graph with an isometry invariant law. 
Almost surely, for every $r$-trifurcation $y\in V_G$, $C_G(y)$ is transient. 
\end{theorem}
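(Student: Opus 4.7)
The plan is to adapt to the random-graph setting the forest-of-trifurcations framework of Section~\ref{sec:forest}, and then transfer transience from the forest to $C_G(y)$ by a flow-lifting argument.

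Augment the probability space with iid $[0,1]$-uniform labels on $V_G$ and define a random forest $F_G$ on the set $V_G^{\mathrm{tri}}$ of $r$-trifurcations in $G$ in exact analogy with Section~\ref{sec:forest}: for every $y\in V_G^{\mathrm{tri}}$ and every infinite component $\mathcal C$ of $C_G(y)\setminus C_G(y,r)$, join $y$ to the $r$-trifurcation in $\mathcal C$ closest to $y$ in the graph distance of $G$, with ties broken by labels. A mass-transport argument parallel to Lemma~\ref{l:tree-of-trifurcations}, using local finiteness of $G$ in place of the finite expected number of components of $\mathcal S\cap\mathsf B(0,1)$, shows that almost surely every infinite branch of an $r$-trifurcation contains infinitely many further $r$-trifurcations. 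The proofs of Lemmas~\ref{l:graph-forest} and \ref{l:graph-degrees} then go through essentially verbatim and yield that $F_G$ is a locally finite forest whose nonempty trees are all infinite and have minimum degree at least $3$.

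Fix an $r$-trifurcation $y_0$ and, for every edge $e=\{y,y'\}$ of $F_G$, fix a geodesic path $\pi(e)$ in $G$ from $y$ to $y'$, with labels breaking ties. Given a unit flow $\theta$ from $y_0$ to infinity on the tree $T$ of $F_G$ containing $y_0$, lift it to a flow $\widetilde\theta$ on $C_G(y_0)$ by $\widetilde\theta(\varepsilon)=\sum_{e\in F_G,\,\varepsilon\in\pi(e)}\theta(e)$ for every edge $\varepsilon$ of $G$. Cauchy--Schwarz gives that the energy of $\widetilde\theta$ is bounded by $\sum_e\theta(e)^2\,\rho(e)$ with $\rho(e):=\sum_{\varepsilon\in\pi(e)}\#\{e'\in F_G:\varepsilon\in\pi(e')\}$ playing the role of a random resistance on the forest. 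A mass-transport argument gives a uniform bound on the expected multiplicity $\mathsf E\big[\#\{e'\in F_G:\varepsilon\in\pi(e')\}\big]$ and on $\mathsf E[|\pi(e)|]$; these, combined with the exponential growth of $F_G$ (minimum degree at least $3$), let us choose $\theta$ so that $\sum_e\theta(e)^2\,\rho(e)<\infty$. Since the existence of a finite-energy unit flow from $y_0$ to infinity in $C_G(y_0)$ characterizes transience of $C_G(y_0)$, the theorem follows.

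The main obstacle is controlling the energy of the lifted flow. Neither the overlap multiplicities nor the path lengths are a priori bounded, so a naive lift of a flow on the unweighted $T$ need not be summable. The remedy is to view the random quantities $\rho(e)$ as edge resistances on a weighted version of the forest and to combine the integrability bounds from mass-transport with the exponential growth of the minimum-degree-$3$ structure of $F_G$ to obtain a finite-energy unit flow on this weighted random network.
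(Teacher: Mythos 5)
Your overall strategy --- reduce to a forest built over the trifurcations and exhibit a finite-energy unit flow on it --- is the right one, but your specific route has a genuine gap exactly at the step you yourself flag as ``the main obstacle,'' and the paper's proof is designed to avoid that obstacle rather than overcome it. The paper does not use the abstract forest of Section~\ref{sec:forest} here. It takes the \emph{minimal spanning forest} $F_G$ of $G$ (i.i.d.\ labels on the \emph{edges} of $G$, delete from each cycle the edge of maximal label), which is a \emph{subgraph} of $G$; Lemma~\ref{l:tree-trifurcation} shows that $C_G(y)$ contains a tree of $F_G$ with a trifurcation, and Rayleigh monotonicity then transfers transience of that tree directly to $C_G(y)$. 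The explicit flow is constructed on (the backbone of) this tree, so no lifting along geodesics of $G$ is ever needed; the only unbounded quantity to control is the length of the degree-two chains between consecutive trifurcations, and this is handled by a mass transport in which the total mass received by a single vertex is bounded by $1$, so that only $\mathsf E\big[|V_G\cap\mathsf B(0,1)|\big]<\infty$ is used.

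In your version the forest edges are abstract pairs of trifurcations, and the lifted energy bound $\sum_e\theta(e)^2\rho(e)$ involves two random quantities --- the $G$-geodesic length $|\pi(e)|$ between adjacent trifurcations and the overlap multiplicity $N(\varepsilon)=\#\{e':\varepsilon\in\pi(e')\}$ --- for which you assert, but do not prove, mass-transport moment bounds. I do not see how to obtain them from the standing assumption \eqref{eq:graph-degree-condition}: to bound the expected path lengths by mass transport one would send a unit of mass from an endpoint of $e$ to each edge of $\pi(e)$, and the outgoing mass $\sum_{e\ni y}|\pi(e)|$ is the total graph distance from $y$ to its neighbouring trifurcations, which has no a priori finite expectation --- and the mass-transport principle requires finiteness of \emph{one} side before it yields anything. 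Worse, even granting $\mathsf E[N(\varepsilon)]<\infty$, running the paper's style of argument on $\sum_e\theta(e)^2\rho(e)$ (bound $\sum_{y_0}\theta_{y_0}(e)^2$ uniformly and transport the rest to the edges of $G$) produces a term of order $N(\varepsilon)^2$ received at $\varepsilon$, i.e.\ a second-moment condition on the overlaps that is nowhere available. There is also a smaller issue upstream: graph $r$-trifurcations carry no $2r$-separation (unlike Definition~\ref{def:r-trifurcation}), so the analogues of Lemmas~\ref{l:tree-of-trifurcations} and \ref{l:graph-degrees} need the truncation $|V_G\cap\mathsf B(y,2r)|<M$ of Lemma~\ref{l:graph-trifurcation}, not merely local finiteness of $G$. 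The clean fix is to replace your abstract forest by the minimal spanning forest, after which the lifting problem disappears entirely.
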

Theorem~\ref{thm:rw-transience-intro} easily follows from Theorem~\ref{thm:rw-transience}. Indeed, one shows similarly to the proof of Lemma~\ref{l:infinite-clusters-trifurcations-2} that under the assumptions of Theorem~\ref{thm:rw-transience-intro}, every infinite connected component $C$ of $G$ contains $r$-trifurcations for some $r=r(C)$. 

\smallskip

Before proving Theorem~\ref{thm:rw-transience}, we discuss some properties of trifurcations. 
\begin{lemma}\label{l:graph-trifurcation}
Let $G$ be a random unoriented graph with an isometry invariant law. Let $r>0$ and $M<\infty$.  Almost surely if $y$ is $r$-trifurcation with $|V_G\cap\mathsf B(y,2r)|<M$, then every infinite component of $C_G(y)\setminus C_G(y,r)$ contains infinitely many $r$-trifurcations $y'$ with $|V_G\cap\mathsf B(y',2r)|<M$.
\end{lemma}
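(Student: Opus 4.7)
The plan is to argue by contradiction via the mass-transport principle (Lemma~\ref{l:mtp}), mirroring the arguments in the proofs of Lemma~\ref{l:tree-of-trifurcations} and Lemma~\ref{l:graph-degrees} in the vertex-based setting. Call $y\in V_G$ \emph{good} if it is an $r$-trifurcation with $|V_G\cap\mathsf B(y,2r)|<M$; let $V_*$ denote the set of good trifurcations and
\[
V^M:=\{v\in V_G:|V_G\cap\mathsf B(v,2r)|<M\}\supseteq V_*.
\]
Supposing the lemma fails, the isometry invariance of the law of $G$ implies that with positive probability there exist $y^*\in V_*\cap\mathsf B(0,1)$ and an infinite component $\mathcal C^*$ of $C_G(y^*)\setminus C_G(y^*,r)$ containing only finitely many elements of $V_*$, say $y_1,\dots,y_K$.

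The key deterministic observation is that $V^M$ has uniformly bounded local density: if $v_1,v_2\in V^M$ lie within distance $2r$ of each other, then $\{v_1,v_2\}\subseteq V_G\cap\mathsf B(v_1,2r)$, which has fewer than $M$ elements. Covering any closed unit ball of $\mathsf X$ by a fixed finite collection of balls of radius $r$ therefore yields a constant $C_{r,\mathsf X}<\infty$ with $|V^M\cap\mathsf B(x,1)|\leq C_{r,\mathsf X}M$ for every $x\in\mathsf X$ almost surely. In particular $\mathbb E[|V^M\cap\mathsf B(0,1)|]<\infty$.

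For $v\in V^M$ and $k\in\mathbb N$, let $T(v,k)$ be the set of the $k$ elements of $V_*\cap C_G(v)$ that are graph-closest to $v$ in $G$ (or all of $V_*\cap C_G(v)$ if it has at most $k$ elements), with ties broken in an isometry-invariant fashion using i.i.d.\ uniform marks attached to the vertices of $V_G$. Set
\[
m_k(v,y)=\tfrac{1}{|T(v,k)|}\mathds{1}_{\{y\in T(v,k)\}},\qquad
\phi_k(A,B)=\mathbb E\Big[\sum_{v\in V^M\cap A}\sum_{y\in V_*\cap B}m_k(v,y)\Big]
\]
(with $0/0:=0$). Then $\phi_k$ is diagonally invariant, and since $\sum_y m_k(v,y)\leq 1$ for every $v$, the density estimate gives $\phi_k(\mathsf B(0,1),\mathsf X)\leq C_{r,\mathsf X}M<\infty$.

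To contradict Lemma~\ref{l:mtp} I would then exhibit $\phi_k(\mathsf X,\mathsf B(0,D))=\infty$ for suitable deterministic $k,D$. On the bad event, any sender $v\in V^M\cap\mathcal C^*$ whose graph distance to $y^*$ exceeds $\max_i d_G(v,y_i)$ must have $T(v,K+1)\subseteq\{y_1,\dots,y_K,y^*\}$, since every good trifurcation of $C_G(y^*)$ outside this finite set lies on the far side of $y^*$ and hence at graph distance at least $d_G(v,y^*)$ from $v$. Each such $v$ therefore routes mass $\geq 1/(K+1)$ to this $(K+1)$-element set, which sits inside $\mathsf B(y^*,\max_i d_\mathsf X(y^*,y_i))$. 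Conditioning on deterministic bounds $K\leq K_0$ and $\max_i d_\mathsf X(y^*,y_i)\leq D_0$ (by countable additivity) and choosing $k=K_0+1$, an infinite number of such senders forces $\phi_k(\mathsf X,\mathsf B(0,1+D_0))=\infty$, the required contradiction. The main technical hurdle I anticipate is verifying that $V^M\cap\mathcal C^*$ is in fact infinite on the bad event — $\mathcal C^*$ is infinite as a subset of $V_G$ but could a priori contain only finitely many low-density vertices; I plan to handle this by taking the monotone limit $M\to\infty$ in the definition of $V^M$, exploiting the local finiteness of $V_G$ that holds in the intended applications (e.g.\ Theorem~\ref{thm:rw-transience-intro}, where $V_G$ is the support of a point measure and condition \eqref{eq:graph-degree-condition} is in force), so that $V^M\cap\mathcal C^*$ eventually exhausts $\mathcal C^*$ realization-wise.
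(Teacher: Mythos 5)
Your overall architecture is the right one --- it is the mass-transport scheme of Lemma~\ref{l:tree-of-trifurcations}, with your deterministic density bound $|V^M\cap\mathsf B(x,1)|\leq C_{r,\mathsf X}M$ playing the role that the finite expected number of components plays there; that observation is correct and is exactly what the $M$-truncation in the statement is for. The gap is in the mass-reception step, and it comes from ranking receivers by the graph distance from $v$ to the \emph{points} $y'$ instead of to their \emph{local components} $C_G(y',r)$ (which is what Lemma~\ref{l:tree-of-trifurcations} does). For $v\in\mathcal C^*$ and a good trifurcation $u\notin\mathcal C^*$, the only thing separation of $\mathcal C^*$ gives you is $d_G(v,u)\geq d_G\big(v,C_G(y^*,r)\big)$, \emph{not} $d_G(v,u)\geq d_G(v,y^*)$: the geodesic from $v$ may enter the separator $C_G(y^*,r)$ at a vertex $w$ several graph-steps from $y^*$ and reach a good trifurcation in another branch from $w$ more cheaply than it reaches $y^*$. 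Such competitors $u$ only need to lie within graph distance $\mathrm{diam}_G(C_G(y^*,r))$ of the separator, and without any degree or moment hypothesis (none is assumed in the lemma) that set can contain infinitely many good trifurcations and be unbounded in $\mathsf X$. So $T(v,K+1)$ need not meet $\{y_1,\dots,y_K,y^*\}$ at all, your condition $d_G(v,y^*)>\max_i d_G(v,y_i)$ may moreover be satisfied by no sender (e.g.\ when $K\geq 1$ and the $y_i$ sit at the end of a long dangling branch of $\mathcal C^*$), and the case $K=0$ leaves you with no receiver in $\mathcal C^*$ to fall back on.

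The repair is to follow the template of Lemma~\ref{l:tree-of-trifurcations} more literally: let each sender $v$ distribute mass among the good trifurcations $y'$ whose local components $C_G(y',r)$ are among the $k$ nearest to $v$ in graph distance, and take the single receiver to be $y^*$. If $u\notin\mathcal C^*$ and $C_G(u,r)\cap\mathcal C^*=\emptyset$, then $d_G\big(v,C_G(u,r)\big)\geq d_G\big(v,C_G(y^*,r)\big)$ exactly; and if $C_G(u,r)$ does meet $\mathcal C^*$ while $u\notin\mathcal C^*$, connectedness of $C_G(u,r)$ forces it to meet the separator, hence $u\in V_G\cap\mathsf B(y^*,2r)$, of which there are fewer than $M$ by the goodness of $y^*$. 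Thus at most $K+M$ local components can be strictly closer to $v$ than $C_G(y^*,r)$, so $y^*$ receives mass at least $1/(K+M+1)$ from \emph{every} sender in $V^M\cap\mathcal C^*$, and $y^*\in\mathsf B(0,1)$, giving $\phi(\mathsf X,\mathsf B(0,1))=\infty$ directly. Your remaining worry --- that $V^{M'}\cap\mathcal C^*$ be infinite for some $M'$ --- is genuine and is not resolved by letting $M'\to\infty$ alone: even when $V_G$ is locally finite, an infinite $\mathcal C^*$ could a priori meet each fixed $V^{M'}$ in a finite set, so this point needs its own argument (or an explicit local-finiteness/density hypothesis) rather than the appeal to the intended applications.
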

\begin{proof}
The same argument as in the proof of Lemma~\ref{l:tree-of-trifurcations} applies. We omit the details. 
\end{proof}

\smallskip

Consider a random unoriented graph $G$ with independent $[0,1]$-uniform random labels $\{u_e\}_{e\in E_G}$ (see e.g.\ \cite[Section~5.2]{LastPenrose} for a construction of marked point processes). The minimal spanning forest $F_G$ of $G$ is a subgraph of $G$ obtained by 
deleting from every cycle $(x_{i_1},\dots, x_{i_k})$ of $G$ the edge with the maximal label. 
By construction, (a) every connected component of $F_G$ is a tree and (b) every tree of $F_G$ contained in an infinite connected component of $G$ is infinite. Moreover, if $G$ has an isometry invariant law then so does $F_G$.

\smallskip

A vertex $x$ of a tree $T$ is a trifurcation if there are at least $3$ edge disjoint infinite paths from $x$ in $T$. In general, a $r$-trifurcation of $G$ is not a trifurcation of $F_G$, but the following holds.
\begin{lemma}\label{l:tree-trifurcation}
Let $G$ be a random labeled unoriented graph with an isometry invariant law and $r>0$.  Almost surely, if $y$ is $r$-trifurcation for $G$ then $C_G(y)$ contains a tree of $F_G$ with at least one trifurcation. 
\end{lemma}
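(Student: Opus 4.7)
The plan proceeds in three steps, exploiting the cycle/cut characterization of the free minimal spanning forest $F_G$ together with the branching structure of an $r$-trifurcation. Let $y$ be an $r$-trifurcation of $G$ with three infinite connected components $A_1,A_2,A_3$ of $C_G(y)\setminus C_G(y,r)$. The isometry invariance together with the finite-moment assumption on $Z$ forces $\mathsf E\big[|V_G\cap\mathsf B(0,1)|\big]\leq Z<\infty$, so $V_G$ is almost surely locally finite and $G$ has almost surely finite vertex degrees. Consequently $C_G(y,r)$ is a finite subgraph and, for each $i$, the set $E_i$ of $G$-edges between $C_G(y,r)$ and $A_i$ is a finite non-empty cut separating $A_i$ from the rest of $G$; non-emptiness follows from the connectedness of $C_G(y)$ together with the observation that distinct $A_j$'s cannot share edges (else they would coincide). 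The cut rule for $F_G$---if the minimum-label edge of a finite cut were absent from $F_G$ it would be the maximum of some cycle, and the return path of that cycle would have to cross the cut via a strictly lighter edge, a contradiction---shows that the minimum-label edge $f_i\in E_i$ lies in $F_G$. Denote its endpoints by $\tilde y_i\in C_G(y,r)$ and $y_i\in A_i$.

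In the second step, I would exhibit a single tree $T$ of $F_G$ inside $C_G(y)$ that contains three disjoint infinite subtrees, one extending into each $A_i$. To build an infinite portion of $F_G$ inside $A_i$, I would iterate the cut rule against an exhaustion of $A_i$ by finite connected subsets containing $y_i$; each boundary is a finite cut yielding an edge of $F_G$, and these edges accumulate into an infinite piece of $F_G$ penetrating deep into $A_i$. To argue that the three resulting subtrees share a common tree $T$, I would aim to construct an $F_G$-path connecting $\tilde y_1,\tilde y_2,\tilde y_3$ by applying the cut rule to cuts inside or near $C_G(y,r)$ that isolate individual $\tilde y_i$ from the remaining two, producing further edges of $F_G$ that splice the three pieces together into a single tree $T\subset C_G(y)$.

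Finally, given three disjoint infinite subtrees of $T$ emanating from the finite set $\{\tilde y_1,\tilde y_2,\tilde y_3\}\subset C_G(y,r)$, the Steiner subtree of $T$ connecting three representative vertices, one from each infinite subtree, is finite and must contain some vertex of degree at least $3$ in $T$; removing this vertex from $T$ separates $T$ into at least three distinct infinite components, and this vertex is a trifurcation of the tree $T$ inside $C_G(y)$. The main obstacle is the middle step: the FMSF of an infinite graph may decompose a single $G$-component into several (possibly finite) trees, so exhibiting a single tree of $F_G$ that extends infinitely into all three $A_i$'s is delicate. Handling this will require combining the cut rule carefully with isometry invariance, and likely iterating Lemma~\ref{l:graph-trifurcation} inside each $A_i$ to rule out pathological configurations in which the three infinite arms end up scattered across different trees of $F_G$.
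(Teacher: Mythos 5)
Your first and third steps are sound: the cut criterion for the free minimal spanning forest does put the minimum-label edge $f_i$ of each cut $E_i$ into $F_G$, and a tree meeting three infinite arms separated by the finite set $V(C_G(y,r))$ does contain a trifurcation. But the middle step, which you yourself flag as the main obstacle, is a genuine gap and cannot be closed by the cut rule alone. The cut rule only certifies individual edges; it does not force the three edges $f_1,f_2,f_3$ (or the vertices $\tilde y_1,\tilde y_2,\tilde y_3$) into a common tree of $F_G$, nor does it force the tree containing $f_i$ to meet $A_i$ infinitely --- the minimum-label boundary edge of a finite connected piece of $A_i$ may lead back into $C_G(y,r)$ rather than deeper into $A_i$, and the tree through $f_i$ may escape to infinity through a different arm. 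Since the free minimal spanning forest of an infinite connected graph is in general disconnected, and since for suitable (deterministic) labelings the trees meeting $C_G(y,r)$ can each be one- or two-ended while the bulk of each arm is carried by separate trees, no purely deterministic argument of this kind can succeed: the conclusion of the lemma is an almost-sure statement over the labels, and the randomness of the labels must be used.

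This is exactly how the paper closes the gap, and the ingredient you are missing is short. Fix a finite spanning tree $T$ of $C_G(y,r)$ and condition on the event that $u_e<\tfrac12$ for every edge $e$ of $T$ and $u_e>\tfrac12$ for every edge of $G$ on the boundary of $T$; this has conditional probability at least $(\tfrac12)^{|V_G\cap\mathsf B(y,2r)|^2}$ given $G$. On this event no edge of $T$ can be the maximum of any cycle (any cycle leaving $T$ uses a boundary edge with larger label), so $T\subset F_G$ and all of $V(C_G(y,r))$ lies in a single tree $\tilde T$ of $F_G$. Only now does your exhaustion idea work: if $\tilde T\cap A_i$ were finite, the full edge boundary of $U=V(C_G(y,r))\cup(\tilde T\cap A_i)$ inside $A_i\cup V(C_G(y,r))$ would be a finite nonempty cut whose minimum-label edge lies in $F_G$ and joins $\tilde T$ to $A_i\setminus U$, a contradiction; hence $\tilde T$ meets each $A_i$ infinitely and contains a trifurcation. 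Since the good label event only has positive conditional probability, one finally invokes Lemma~\ref{l:graph-trifurcation}: the component contains infinitely many $r$-trifurcations $y'$ with $|V_G\cap\mathsf B(y',2r)|<M$, so (extracting a subfamily with disjoint neighborhoods and using independence of the labels) the good event occurs at some trifurcation of $C_G(y)$ almost surely, which is all the lemma claims. I would also note that your appeal to isometry invariance and to iterating Lemma~\ref{l:graph-trifurcation} \emph{inside each arm} does not substitute for this: the issue is not locating more trifurcations of $G$ but gluing trees of $F_G$, and only the label randomness does that.
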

\begin{proof}
The proof is essentially the same as the proof of \cite[Lemma~8.35]{LyonsPeresBook}. 
Consider a realization of labeled graph $G$ and let $y$ be a $r$-trifurcation of $G$. 
Let $T$ be an arbitrary (finite) spanning tree of $C_G(y,r)$. Note that if $u_e<\frac12$ for all edges $e$ of $T$ and $u_e>\frac12$ for all edges $e$ on the boundary of $T$ in $G$, which occurs with (conditional) probability 
$\geq \big(\frac12\big)^{|V_G\cap\mathsf B(y,2r)|^2}$, then 
there is a tree in the minimal spanning forest $F_G$ which contains $T$ and has infinite intersection with every infinite component of $C_G(y)\setminus C_G(y,r)$. Thus, at least one of the vertices of $T$ is a trifurcation of $F_G$. 
By Lemma~\ref{l:graph-trifurcation}, every connected component of $G$ with $r$-trifurcation contains infinitely many $r$-trifurcations $y'$ with $|V_G\cap\mathsf B(y',2r)|<M$ for some $M$, thus it contains a trifurcation of $F_G$ almost surely. 
\end{proof}

\begin{proof}[Proof of Theorem~\ref{thm:rw-transience}]
By the monotonicity of transience (see e.g.\ \cite[Section~2.4]{LyonsPeresBook}) and Lemma~\ref{l:tree-trifurcation}, it suffices to prove that every tree of $F_G$ which contains a trifurcation is transient. Denote the forest of these trees by $F_G'$.
We further prune the trees in $F_G'$ by cutting off their dangling ends. More precisely, we define \emph{backbone} $B(T)$ of tree $T$ as the subtree of $T$ induced by the vertices of $T$ from which there exist at least two edge disjoint infinite paths in $T$ and let $\widehat F_G = \{B(T):T\in F_G'\}$ be the forest consisting of backbones of the trees from $F_G'$. The result will follow, if we show that 
 \begin{equation}\label{eq:transience-occupied-forest}
\text{every tree in $\widehat F_G$ is transient almost surely.}
\end{equation}
It is possible to prove \eqref{eq:transience-occupied-forest} by adapting the ideas from \cite[Sections~8.3 and 8.6]{LyonsPeresBook}, but we propose here a more direct proof by constructing a flow of finite energy from a trifurcation on each of the trees in $\widehat F_G$.

\smallskip

We write $\widehat V_G$ for the vertex set of $\widehat F_G$ and $\widehat T_G$ for the set of trifurcations. For $x\in \widehat V_G$, we denote by $D_x$ the degree of $x$ in $\widehat F_G$. Note that $D_x\geq 3$ if $x\in\widehat T_G$ and $D_x=2$ otherwise.
For each realization of the forest $\widehat F_G$ and any $y\in\widehat T_G$, we define the function $\theta_y :\widehat V_G\to \mathbb R_+$ by $\theta_y(y) = 1$, $\theta_y(x)=0$ if $x$ is not in the same tree as $y$ and 
\[
\theta_y(x) = \frac{1}{D_y}\prod\limits_{z\in\pi_{yx}\setminus\{x,y\}}\frac{1}{D_z-1},
\]
if $x$ is in the tree of $y$ in $\widehat F_G$, where $\pi_{yx}$ is the unique path from $y$ to $x$ in $\widehat F_G$. Note that $\theta_y$ is a unit flow from $y$ (which splits uniformly at trifurcations) and its energy equals
\[
\mathcal E(\theta_y) = \sum\limits_{x\in\widehat V_G}\theta_y(x)^2.
\]
We claim that $\mathcal E(\theta_y)<\infty$ almost surely. We write $\mathcal E(\theta_y) = \mathcal E_1(\theta_y) + \mathcal E_2(\theta_y)$, where 
\[
\mathcal E_1(\theta_y) = \sum\limits_{x\in\widehat T_G} \theta_y(x)^2\quad\text{and}\quad
\mathcal E_2(\theta_y) = \sum\limits_{x\notin\widehat T_G} \theta_y(x)^2.
\]
For $x\in \widehat V_G$ and $n\geq 1$, let $T_n(x)$ be the set of all trifurcations $y'$ of $\widehat F_G$ such that the unique path in $\widehat F_G$ from $x$ to $y'$ contains exactly $n$ trifurcations (including $y'$, but not $x$). 
Since the forest $\widehat F_G$ has an isometry invariant law, by Lemma~\ref{l:graph-trifurcation} applied to $\widehat F_G$, every infinite path in every tree of $\widehat F_G$ contains infinitely many trifurcations almost surely. In particular, 
$|T_1(x)| = D_x$ and $|T_n(x)|\geq 2|T_{n-1}(x)|$ for $n\geq 2$. 

\smallskip

We first show that $\mathcal E_1(\theta_y)<\infty$ almost surely. We have 
\begin{eqnarray*}
\mathcal E_1(\theta_y) &= &1 + \sum\limits_{n=1}^\infty \sum\limits_{y'\in T_n(y)}\theta_y(y')^2\\
&= &1 + \sum\limits_{n=1}^\infty \sum\limits_{y_1\in T_1(y)}\sum\limits_{y_2\in T_1(y_1)\setminus\{y\}}\ldots \sum\limits_{y_n\in T_1(y_{n-1})\setminus\{y_{n-2}\}}
\Big(\frac{1}{D_y}\prod\limits_{i=1}^{n-1}\frac{1}{D_{y_i}-1}\Big)^2\\
&= &1 + \sum\limits_{n=1}^\infty \frac{1}{D_y^2}\sum\limits_{y_1\in T_1(y)}\frac{1}{(D_{y_1}-1)^2}\sum\limits_{y_2\in T_1(y_1)\setminus\{y\}}\ldots \frac{1}{(D_{y_{n-1}}-1)^2}
\sum\limits_{y_n\in T_1(y_{n-1})\setminus\{y_{n-2}\}}1\\
&\leq & 1 + \sum\limits_{n=1}^\infty\frac{1}{2^n}<\infty.
\end{eqnarray*}
It remains to show that $\mathcal E_2(\theta_y)<\infty$ almost surely. Consider the mass function 
\[
m(y,x) = \left\{\begin{array}{lll} \theta_y(x)^2 && y\in\widehat T_G, x\notin\widehat T_G\\ 0 && \text{else.}\end{array}\right.
\]
We claim that almost surely
\begin{equation}\label{eq:claim:energy}
\sum\limits_{y\in\widehat T_G} m(y,x)\leq 1\quad\text{for all } x\notin\widehat T_G
\end{equation}
(and $=0$ for $x\in\widehat T_G$). Assume \eqref{eq:claim:energy}. The function 
\[
\phi(A,B) = \mathsf E\big[\sum\limits_{y\in \widehat V_G\cap A}\sum\limits_{x\in\widehat V_G\cap B} m(y,x)\big] 
\]
is isometry invariant and $\phi(\mathsf X,\mathsf B(0,1))\leq \mathsf E\big[|V_G\cap\mathsf B(0,1)|\big]<\infty$ by \eqref{eq:claim:energy}. Thus, by the mass-transport principle (Lemma~\ref{l:mtp}) and the fact that $\sum\limits_x m(y,x) = \mathcal E_2(\theta_y)$, 
\[
\phi(\mathsf B(0,1),\mathsf X) = \mathsf E\big[\sum\limits_{y\in\widehat T_G\cap\mathsf B(0,1)}\mathcal E_2(\theta_y)\big]<\infty.
\]
In particular, $\mathcal E_2(\theta_y)<\infty$ for all $y\in\widehat T_G$ almost surely. 

\smallskip

It remains to prove \eqref{eq:claim:energy}. Let $x\notin \widehat T_G$. We have 
\begin{eqnarray*}
\sum\limits_y m(y,x) &= &\sum\limits_{y\in \widehat T_G} \theta_y(x)^2
= \sum\limits_{n=1}^\infty \sum\limits_{y\in T_n(x)}\theta_y(x)^2\\
&= &\sum\limits_{n=1}^\infty \sum\limits_{y_1\in T_1(x)}\sum\limits_{y_2\in T_1(y_1)\setminus T_1(x)}\sum\limits_{y_3\in T_1(y_2)\setminus \{y_1\}}\hspace{-5pt}\ldots\hspace{-5pt} \sum\limits_{y_n\in T_1(y_{n-1})\setminus\{y_{n-2}\}}
\hspace{-5pt}\Big(\frac{1}{D_{y_n}}\prod\limits_{i=1}^{n-1}\frac{1}{D_{y_i}-1}\Big)^2\\
&= &\sum\limits_{n=1}^\infty \sum\limits_{y_1\in T_1(x)}\frac{1}{(D_{y_1}-1)^2}\sum\limits_{y_2\in T_1(y_1)\setminus T_1(x)}\frac{1}{(D_{y_2}-1)^2}\,\,\ldots 
\sum\limits_{y_n\in T_1(y_{n-1})\setminus\{y_{n-2}\}}\frac{1}{D_{y_n}^2}\\
&\leq & |T_1(x)|\sum\limits_{n=1}^\infty\frac{1}{2^{n+1}}= 1.
\end{eqnarray*}
The proof is completed. 
\end{proof}
\begin{remark}
If the stronger assumption $\mathsf E[|V_G\cap\mathsf B(0,2)|^2]<\infty$ holds, then one can estimate the energy $\mathcal E(\theta_y)$ directly by considering the function $m'(y,x) = \theta_y(x)^2$ for all $x$. Indeed, $\mathcal E(\theta_y) = \sum_x m'(y,x)$, the last estimate in the proof of Theorem~\ref{thm:rw-transience} gives $\sum_y m'(y,x) \leq \frac12 D_x$, and  
$\sum_{x\in V_G\cap\mathsf B(0,1)}D_x\leq |V_G\cap\mathsf B(0,2)|^2$, so the finiteness of $\mathcal E(\theta_y)$ follows by an application of the mass-transport principle as in the estimation of $\mathcal E_2(\theta_y)$. 
\end{remark}

\section{Boolean models with random radii}\label{sec:random-radii}

As mentioned in the introduction, all the results of this paper extend to Boolean models with random i.i.d.\ radii driven by isometry invariant insertion- or deletion-tolerant processes, but some notation and proof steps get more involved. In this section, we discuss necessary modifications to definitions and proofs. 

\smallskip

We consider random marked point measures on $\mathsf X$ with i.i.d.\ positive marks (see e.g.\ 
\cite[Section~5.2]{LastPenrose} for a precise construction) and write $\widetilde \omega=\sum_{i\geq 1}\delta_{(x_i,r_i)}$ for their realizations. Each $\widetilde\omega$ induces the partition of $\mathsf X$ into the \emph{occupied set} $\mathcal O(\widetilde\omega) = \bigcup_{i\geq 1}\mathsf B(x_i,r_i)$ and the \emph{vacant set} $\mathcal V(\widetilde\omega) = \mathsf X\setminus\mathcal O(\widetilde\omega)$. 
The definitions of the occupied and vacant component properties extend naturally to the setting of random radii Boolean models.

We denote by $\mathsf Q$ the common law of the marks. 
We say that a random marked point measure $\widetilde \omega$ is \emph{insertion-tolerant}, if for every $B\in\mathscr B_0(\mathsf X)$, the law of $\widetilde\omega+\delta_{(X,\varrho)}$ is absolutely continuous with respect to the law of $\widetilde\omega$, where $(X,\varrho)$ is independent from $\widetilde\omega$, $X$ is uniformly distributed in $B$ and $\varrho$ has law $\mathsf Q$. 
We say that a random marked point measure $\widetilde \omega$ is \emph{deletion-tolerant}, if for every $B\in\mathscr B_0(\mathsf X)$, the law of $\widetilde\omega|_{B^c\times\R_+}$ is absolutely continuous with respect to the law of $\widetilde\omega$. 

\smallskip

While no additional assumptions are needed to extend our results about the occupied set to the setting of random radii, our methods allow to extend our results about the vacant set only under the additional assumption that 
\begin{equation}\label{eq:assumption-iid-vacant}
\text{the number of balls intersecting $\mathsf B(0,1)$ is finite almost surely,}
\end{equation}
which we need in order to be able to make local modifications in the vacant set.
For the Poisson-Boolean model with random radii, condition \eqref{eq:assumption-iid-vacant} holds iff $\mathsf E[\mu_\mathsf X(\mathsf B(0,\varrho))]<\infty$, where $\varrho$ is a generic random variable with law $\mathsf Q$ (see e.g.\ \cite[Section~3.1]{MeesterRoy}), which is also equivalent to the vacant set being non-empty. Thus, condition \eqref{eq:assumption-iid-vacant} is not restrictive for Poisson-Boolean models, but we do not know if for general isometry invariant deletion-tolerant Boolean models, non-emptiness of the vacant set implies \eqref{eq:assumption-iid-vacant} or not. 

\begin{theorem}\label{thm:main-iid}
Let $\widetilde\omega$ be a random point measure on $\mathsf X$ with i.i.d.\ positive marks and an isometry invariant law. 
\begin{itemize}
 \item[(a)]
The analogues of Theorems~\ref{thm:Indistinguishability}, \ref{thm:monotonicity-of-uniqueness}--\ref{thm:rw-transience-intro} hold for the occupied set $\mathcal O(\widetilde\omega)$. 
\item[(b)]
The analogues of Theorems~\ref{thm:Indistinguishability}, \ref{thm:monotonicity-of-uniqueness}--\ref{thm:percolation-intro} hold for the vacant set $\mathcal V(\widetilde\omega)$, if, in addition to the assumptions of those theorems, $\widetilde\omega$ satisfies \eqref{eq:assumption-iid-vacant}. 
\end{itemize}
\end{theorem}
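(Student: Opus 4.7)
The plan is to replay each argument in the paper with $\omega\in\mathsf M(\mathsf X)$ replaced by a marked point measure $\widetilde\omega=\sum_{i\geq 1}\delta_{(x_i,r_i)}\in\mathsf M(\mathsf X\times\R_+)$, where the marks $r_i$ are i.i.d.\ with common law $\mathsf Q$. Since the isometries act trivially on the radius coordinate, the diagonal invariance and mass-transport arguments (Lemma~\ref{l:mtp}) are unaffected. Likewise, the definition of $r$-trifurcation, the construction of the forest $F_\mathcal S$, the random walk $w$ and the stationarity result (Proposition~\ref{prop:rw-stationarity}) go through verbatim once $\mathcal O$ and $\mathcal V$ are read off $\widetilde\omega$. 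What requires genuine modification are only the local-modification lemmas of Sections~\ref{sec:ins-del-properties}, \ref{sec:first-properties} and \ref{sec:pivotal}. I describe them one by one.

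First I would extend Lemma~\ref{l:conditional-probability-positive} to the marked setting. For insertion-tolerance, pick any interval $I\subset\R_+$ with $\mathsf Q(I)>0$; if $S\subseteq B$ is $\sigma(\widetilde\omega|_{B^c\times\R_+})$-measurable with $\mu_\mathsf X(S)>0$ a.s., then the identity
\[
\widetilde{\mathbf P}^{B}\big[\widetilde\omega(S\times I)=\widetilde\omega(B\times\R_+)=1,\,\widetilde\omega|_{B^c\times\R_+}\in A\big]=\mathsf Q(I)\tfrac{\mu_\mathsf X(S)}{\mu_\mathsf X(B)}\,\widetilde{\mathbf P}\big[\widetilde\omega(B\times\R_+)=0,\,\widetilde\omega|_{B^c\times\R_+}\in A\big]
\]
combined with absolute continuity $\widetilde{\mathbf P}^B\ll\widetilde{\mathbf P}$ gives the marked analogue. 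Deletion-tolerance is unchanged. Lemma~\ref{l:insertion-tolerance-for-conditional} carries over with the same proof since one only needs the action of isometries to preserve the law and the tail structure.

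Next, for Lemma~\ref{l:pivotals}(a), pick a deterministic window $I\subset\R_+$ with $\mathsf Q(I)>0$ and a radius $\rho_0>0$ with $\rho_0\leq\inf I$; the glueing/pivotal argument then proceeds as before with unit balls replaced by balls of radius $\rho_0$ and with the pivotal modification consisting of inserting a point $(y,r)$ with $y\in\mathsf B(z,\rho_0/4)$ and $r\in I$. For the vacant part (b) we use the finite-intersection assumption \eqref{eq:assumption-iid-vacant}: for any $\varepsilon>0$ there exists $R_\varepsilon<\infty$ such that, with probability at least $1-\varepsilon$, every ball of the model intersecting $\mathsf B(0,1)$ has centre in $\mathsf B(0,R_\varepsilon)$. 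Hence, restricting $\widetilde\omega$ to $(\mathsf B(a,R_\varepsilon))^c\times\R_+$ yields a configuration in which $\mathsf B(a,\tfrac12)\subset\mathcal V$ with high probability, and the remainder of the argument — identifying the smaller of the two resulting pivotal events by isometry invariance and the definition of $s_0$ — is identical. The same modification repairs Lemmas~\ref{l:number-of-infinite-clusters-occupied} and \ref{l:infinite-clusters-trifurcations-2}: glueing is done by inserting balls with radii in $I$ for the occupied case, and by deleting all points in $\mathsf B(0,n+R_\varepsilon)\times\R_+$ for the vacant case (the deleted region is made large enough, using \eqref{eq:assumption-iid-vacant}, to guarantee that $\mathsf B(0,n+1)$ is vacant).

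With these local-modification tools in place, the proofs of Theorem~\ref{thm:Indistinguishability} (both (a) and (b)), Theorem~\ref{thm:monotonicity-of-uniqueness}, Theorem~\ref{thm:connectivity}, Theorem~\ref{thm:percolation-intro} and Theorem~\ref{thm:rw-transience-intro} transfer word for word: throughout one simply carries the radius as an extra coordinate, and every appeal to insertion- or deletion-tolerance is replaced by its marked counterpart above. The marked-process extension already recorded in Remark~\ref{rem:Indistinguishability-extension} accommodates the additional randomness of the marks without further work, so Theorems~\ref{thm:monotonicity-of-uniqueness} and \ref{thm:connectivity} are recovered exactly as in Sections~\ref{sec:uniqueness-monotonicity}--\ref{sec:connectivity}. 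I expect the main subtlety to be the pivotal lemma for the vacant set: without \eqref{eq:assumption-iid-vacant}, arbitrarily distant centres could still cover a neighbourhood of $a$, so no fixed ball $B$ around $a$ would suffice to realise the pivotal modification via a single deletion step. This is precisely why (b) is stated under the extra hypothesis; once \eqref{eq:assumption-iid-vacant} is assumed, the above choice of $R_\varepsilon$ circumvents the difficulty and no other part of the argument requires further change.
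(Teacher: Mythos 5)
Your overall strategy coincides with the paper's: isolate the local-modification steps (Lemmas~\ref{l:conditional-probability-positive}, \ref{l:number-of-infinite-clusters-occupied}, \ref{l:infinite-clusters-trifurcations-2}, \ref{l:pivotals} and the notion of pivotal set), redo only those using a bounded window of radii for insertions and the finiteness assumption \eqref{eq:assumption-iid-vacant} for deletions, and let everything else (mass transport, the forest of trifurcations, Proposition~\ref{prop:rw-stationarity}) pass through unchanged. There is, however, a genuine gap in the claim that the proof of Theorem~\ref{thm:Indistinguishability} then ``transfers word for word'', and it concerns the \emph{locality} of the pivotal modification. In the constant-radius proof, the events $\mathcal G_{n,z}'$ require that $\mathsf B(z,\tfrac98)$ (resp.\ $\mathsf B(z,3)$) meets at most one branch of $C_\mathcal S(w(n))\setminus C_\mathcal S(w(n),r)$ and that the past of the walk avoids that ball; the radii $\tfrac98=\tfrac18+1$ and $3=2+1$ are exactly ``radius of the modified region plus the radius of the balls being inserted or deleted'', and this is what guarantees that $\mathcal G_{n,z}''$ does not depend on the configuration inside the pivotal region and that all trifurcations visited by the walk, together with their neighbourhoods in $F_\mathcal S$, survive the modification. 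Your vacant-set modification deletes \emph{all} points of $\widetilde\omega$ in $\mathsf B(z,R_\varepsilon)$ with no upper bound on their marks: your event only forces balls hitting $\mathsf B(a,1)$ to be \emph{centred} in $\mathsf B(a,R_\varepsilon)$, not to have bounded radii, so a deleted ball of radius $10^6$ centred near $z$ changes $\mathcal V$ throughout $\mathsf B(z,10^6)$. The effect of your modification is therefore not confined to any deterministic ball, the analogues of the conditions on $\mathsf B(z,3)$ cannot be written down with a fixed radius, and the measurability/invariance claim for $\mathcal G_{n,z}''$ breaks. The paper avoids this by deleting only $\mathsf B(z,\Delta)\times[0,r_*]$ and building into the definition of the pivotal set the requirement $\widetilde\omega\big(\mathsf B(z,\Delta)\times(r_*,\infty)\big)=0$, so the modification is provably confined to $\mathsf B(z,\Delta+r_*)$; correspondingly, the deletion part of Lemma~\ref{l:conditional-probability-positive} must be restated so that its conclusion holds only on the event that no large-radius points sit in the deletion region --- your remark that ``deletion-tolerance is unchanged'' misses this.

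A second, smaller omission is in your occupied pivotal lemma. The conditional-resampling step needs the pivotal region to be empty of points of $\widetilde\omega$, so that $\widetilde\omega$ agrees with its restriction to the complement of that region. In the constant-radius case this follows because any point of $\omega$ in $\mathsf B(a,\tfrac12)$ would place $a$ inside its unit ball and contradict the minimality of $s_0$. With random radii a point of $\widetilde\omega$ in $\mathsf B(a,\tfrac12\rho_0)$ may carry a mark much smaller than $\rho_0$, its ball need not contain $a$, and the contradiction disappears. One must additionally work on the positive-probability event that all balls centred in a neighbourhood of $a$ have radius bounded below by some $r_1>0$, and choose $\delta$ and the insertion window relative to this $r_1$. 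Both gaps are repairable, and the repairs are exactly the content of the paper's treatment, but as written your argument does not close them.
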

The proof of Theorem~\ref{thm:main-iid} follows the same line as the proof of its special case, when all radii are equal to $1$, considered in details in the previous sections, but some notation and proof steps get more involved. In fact, we only have to suitably adjust the statements and/or the proofs of Lemmas~\ref{l:conditional-probability-positive}, \ref{l:number-of-infinite-clusters-occupied}, \ref{l:infinite-clusters-trifurcations-2} and \ref{l:pivotals} and Theorem~\ref{thm:Indistinguishability} and the definition of pivotal sets. In the rest of this section, we discuss the precise modifications that should be made there. 

\smallskip

A substitute for Lemma~\ref{l:conditional-probability-positive} is the following lemma. 
\begin{lemma}\label{l:conditional-probability-positive-iid}
Let $\widetilde B=B\times[0,r_*]$, for $B\in\mathscr B_0(\mathsf X)$ and $r_*>0$ such that $\mathsf P[\varrho\leq r_*]>0$. 
\begin{itemize}\itemsep4pt
\item[(a)]
If $\widetilde \omega$ is insertion-tolerant, then for every $\sigma(\widetilde\omega|_{\widetilde B^c})$-measurable set $\widetilde S\subseteq \widetilde B$ such that $(\mu_\mathsf X\otimes\mathsf Q)(\widetilde S)>0$ a.s.,
$\mathsf P[\widetilde\omega(\widetilde S)=\widetilde\omega(\widetilde B)=1\,|\,\widetilde\omega|_{\widetilde B^c}] >0$ a.s.\ on the event $\widetilde \omega(\widetilde B)=0$.
\item[(b)]
If $\widetilde \omega$ is deletion-tolerant, then 
$\mathsf P[\widetilde \omega(\widetilde B)=0\,|\,\widetilde\omega|_{\widetilde B^c}] >0$ a.s.\ 
on the event $\widetilde\omega(B\times(r_*,\infty))=0$. 
\end{itemize}
\end{lemma}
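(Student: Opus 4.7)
I plan to follow the same contradiction scheme as in Lemma~\ref{l:conditional-probability-positive}, with two adjustments to accommodate the product structure $\widetilde B = B\times[0,r_*]$: inserting a marked point in part (a) whose mark may or may not lie in $[0,r_*]$, and in part (b) exploiting the fact that deleting all points in $B\times\R_+$ coincides with deleting all points in $\widetilde B$ precisely on the event $\{\widetilde\omega(B\times(r_*,\infty))=0\}$. Throughout, I will use that $\widetilde B^c=(B^c\times\R_+)\cup(B\times(r_*,\infty))$, so information about "big" marks over $B$ is carried by $\widetilde\omega|_{\widetilde B^c}$.

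For part (a), suppose for contradiction that there exists a $\sigma(\widetilde\omega|_{\widetilde B^c})$-measurable event $A$ with $\mathsf P[A,\widetilde\omega(\widetilde B)=0]>0$ while $\mathsf P[A,\widetilde\omega(\widetilde S)=\widetilde\omega(\widetilde B)=1]=0$. Under the insertion law $\mathbf P^B$, the augmented measure $\widetilde\omega+\delta_{(X,\varrho)}$ has $(X,\varrho)$ independent of $\widetilde\omega$ with $X$ uniform on $B$ and $\varrho\sim\mathsf Q$. On the sub-event $\{(X,\varrho)\in\widetilde S\}\cap\{\widetilde\omega(\widetilde B)=0\}\cap A$, the augmented measure places exactly one point in $\widetilde S\subseteq\widetilde B$, namely the inserted one, and its restriction to $\widetilde B^c$ equals $\widetilde\omega|_{\widetilde B^c}$ and hence still lies in $A$. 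Since $\mathsf P[(X,\varrho)\in\widetilde S\,|\,\widetilde\omega]=(\mu_\mathsf X\otimes\mathsf Q)(\widetilde S)/\mu_\mathsf X(B)$ is a.s.\ positive by assumption on $\widetilde S$, the $\mathbf P^B$-probability of the event is bounded below by
\[
\mathbf E\Big[\,\mathds{1}_{\{\widetilde\omega(\widetilde B)=0\}\cap A}\,\tfrac{(\mu_\mathsf X\otimes\mathsf Q)(\widetilde S)}{\mu_\mathsf X(B)}\Big]>0,
\]
contradicting the absolute continuity of $\mathbf P^B$ with respect to $\mathsf P$.

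For part (b), I will argue similarly by contradiction. Since the event $\{\widetilde\omega(B\times(r_*,\infty))=0\}$ is $\sigma(\widetilde\omega|_{\widetilde B^c})$-measurable and on it $\widetilde\omega|_{B^c\times\R_+}=\widetilde\omega|_{\widetilde B^c}$, it suffices to rule out the existence of $A\in\sigma(\widetilde\omega|_{\widetilde B^c})$ with $A\subseteq\{\widetilde\omega(B\times(r_*,\infty))=0\}$, $\mathsf P[A]>0$, and $\mathsf P[A,\widetilde\omega(\widetilde B)=0]=0$. Under the deletion law $\mathbf P_{B^c\times\R_+}$ (which is absolutely continuous with respect to $\mathsf P$ by assumption), the measure $\widetilde\omega|_{B^c\times\R_+}$ automatically has no mass in $\widetilde B$ and coincides with its own $\widetilde B^c$-restriction, so
\[
\mathbf P_{B^c\times\R_+}[A,\widetilde\omega(\widetilde B)=0]=\mathsf P[\widetilde\omega|_{B^c\times\R_+}\in A]\geq\mathsf P[A]>0,
\]
yielding the desired contradiction.

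I anticipate the main obstacle to be bookkeeping rather than conceptual: one must track carefully which sub-$\sigma$-algebra each event is measurable with respect to, and, especially in (b), verify that restricting to the event $\{\widetilde\omega(B\times(r_*,\infty))=0\}$ genuinely identifies the restriction $\widetilde\omega|_{B^c\times\R_+}$ (which deletion-tolerance controls) with the $\widetilde B^c$-restriction on which we condition.
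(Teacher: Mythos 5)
Your argument is correct and essentially identical to the paper's own proof: part (a) lower-bounds the $\mathbf P^B$-probability of $\{\widetilde\omega(\widetilde S)=\widetilde\omega(\widetilde B)=1\}\cap A$ by $\mathbf E\big[\mathds{1}_{\{\widetilde\omega(\widetilde B)=0\}\cap A}\,(\mu_\mathsf X\otimes\mathsf Q)(\widetilde S)/\mu_\mathsf X(B)\big]>0$ via the inserted point landing in $\widetilde S$, and part (b) uses that on $\{\widetilde\omega(B\times(r_*,\infty))=0\}$ the restrictions $\widetilde\omega|_{B^c\times\R_+}$ and $\widetilde\omega|_{\widetilde B^c}$ coincide, both contradicting the respective absolute continuity. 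Folding the event $\{\widetilde\omega(B\times(r_*,\infty))=0\}$ into $A$ in part (b) is a harmless reformulation of the paper's bookkeeping.
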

\begin{proof}
The proof is essentially the same as the proof of Lemma~\ref{l:conditional-probability-positive}. Assume there exists $A$ such that 
$\mathsf P[\widetilde\omega|_{\widetilde B^c}\in A, \widetilde w(\widetilde B)=0]>0$ and $\mathsf P[\widetilde \omega(\widetilde S)=\widetilde w(\widetilde B)=1,\widetilde \omega|_{\widetilde B^c}\in A]=0$, where $\widetilde S$ is as in (a). By the insertion-tolerance of $\widetilde \omega$, if $(X,\varrho)$ is independent from $\widetilde\omega$, $X$ is uniformly distributed in $B$ and $\varrho$ has law $\mathsf Q$, then 
\begin{eqnarray*}
0 &= &\mathsf P\big[(\widetilde \omega+\delta_{(X,\varrho)})(\widetilde S)=(\widetilde \omega+\delta_{(X,\varrho)})(\widetilde B)=1,(\widetilde \omega+\delta_{(X,\varrho)})|_{\widetilde B^c}\in A\big]\\
&\geq &\mathsf P\big[(X,\varrho)\in\widetilde S, \widetilde w(\widetilde B)=0 ,\widetilde \omega|_{\widetilde B^c}\in A\big]
= \mathsf E\Big[\tfrac{(\mu_\mathsf X\otimes\mathsf Q)(\widetilde S)}{\mu_\mathsf X(B)}\,\mathds{1}_{\{\widetilde \omega(\widetilde B)=0,\,\widetilde\omega|_{\widetilde B^c}\in A\}}\Big] >0.
\end{eqnarray*}
This contradiction proves (a). 
Now let $B' = B\times(r_*,\infty)$ and assume there exists $A$ such that $\mathsf P[\widetilde\omega|_{\widetilde B^c}\in A, \widetilde\omega(B')=0]>0$ and $\mathsf P[\widetilde \omega(\widetilde B)=0,\widetilde\omega|_{\widetilde B^c}\in A]=0$. 
By the deletion-tolerance of $\widetilde\omega$, 
\begin{eqnarray*}
0 &= &\mathsf P\big[\widetilde\omega|_{B^c\times\R_+}(\widetilde B)=0,(\widetilde\omega|_{B^c\times\R_+})|_{\widetilde B^c}\in A\big]
= \mathsf P[\widetilde\omega|_{B^c\times\R_+}\in A]\\
&\geq &\mathsf P[\widetilde\omega|_{\widetilde B^c}\in A, \widetilde\omega(B')=0]>0.
\end{eqnarray*}
This contradiction proves (b).
\end{proof}

\smallskip

The method of ``glueing'' components together, used in the proofs of Lemmas~\ref{l:number-of-infinite-clusters-occupied} and \ref{l:infinite-clusters-trifurcations-2}, extends naturally to the setting of random radii. We only explain adjustments to the proof of Lemma~\ref{l:number-of-infinite-clusters-occupied}, but exactly the same modifications apply to the proof of Lemma~\ref{l:infinite-clusters-trifurcations-2}. First, assume that $\mathsf B(0,n)$ is intersected by $k$ unbounded occupied components with positive probability. We choose $r_1>0$ such that $\mathsf P[\varrho>r_1]>0$ and consider a finite covering of $\mathsf B(0,n)$ by balls $B_i$ of radius $\frac12r_1$. Then with positive probability, the enhanced occupied set $\mathcal O\cup\bigcup_i \mathsf B(X_i,\varrho_i)$ contains a unique unbounded component, where $(X_i,\varrho_i)$ are independent, $X_i$ is uniformly distributed in $B_i$ and $\varrho_i$ has law $\mathsf Q$, which leads to a contradiction as in the proof of Lemma~\ref{l:number-of-infinite-clusters-occupied}. 
Now, assume that with positive probability, $\mathsf B(0,n)$ is intersected by $k$ unbounded vacant components and all the balls intersecting $\mathsf B(0,n)$ have radius smaller than $r_2$ (such $r_2$ exists by assumption \eqref{eq:assumption-iid-vacant}). Then with positive probability, the enhanced vacant set $\mathcal V(\widetilde\omega|_{\mathsf B(0,n+r_2)^c\times\R_+})$ contains a unique unbounded component, which leads to a contradiction as in the proof of Lemma~\ref{l:number-of-infinite-clusters-occupied}. 

\smallskip

Next, we suitably generalize the definition of the pivotal set from Section~\ref{sec:pivotal}. Let $r_*>0$. Given an occupied component property $\mathcal A$, a set $B\in\mathscr B_0(\mathsf X)$ is called \emph{$r_*$-pivotal} for the occupied connected component of $x$ if $\widetilde \omega(B\times[0,r_*])=0$ and there exists a measurable set $\widetilde S=\widetilde S(\widetilde \omega)\subseteq B\times[0,r_*]$ with $(\mu_\mathsf X\otimes\mathsf Q)(\widetilde S)>0$ a.s.\ such that precisely one of $(x,\widetilde\omega)$ and $(x,\widetilde\omega+\delta_{(y,r)})$ is in $\mathcal A$, for every $(y,r)\in \widetilde S$.
Given a vacant component property $\mathcal A$, a set $B\in\mathscr B_0(\mathsf X)$ is called \emph{$r_*$-pivotal} for the vacant connected component of $x$ if 
$\widetilde\omega(B\times(r_*,\infty))=0$ and 
precisely one of $(x,\widetilde\omega)$ and $(x,\widetilde\omega|_{(B\times[0,r_*])^c})$ is in $\mathcal A$.
The following lemma is a suitable substitute for Lemma~\ref{l:pivotals}.

\begin{lemma}\label{l:pivotals:iid}
Let $\widetilde\omega$ be a marked point measure with an isometry invariant law and let $\mathcal Z$ be an independent Poisson point process on $\mathsf X$ with intensity $\mu_\mathsf X$. 
\begin{itemize}\itemsep4pt
\item[(a)]
If $\mathcal A$ is an occupied component property, $\widetilde\omega$ is insertion-tolerant, and with positive probability there exist unbounded occupied components of both types $\mathcal A$ and $\neg\mathcal A$, then there exist $\delta$ and $r_*$ such that with positive probability, $C_\mathcal O(0)$ is unbounded and $\mathsf B(z,\delta)$ is $r_*$-pivotal for $C_\mathcal O(0)$ for some $z\in\mathcal Z$. 
\item[(b)]
If $\mathcal A$ is a vacant component property, $\widetilde\omega$ is deletion-tolerant and satisfies \eqref{eq:assumption-iid-vacant}, and with positive probability there exist unbounded vacant components of both types $\mathcal A$ and $\neg\mathcal A$, then
there exist $\Delta$ and $r_*$ such that with positive probability, $C_\mathcal V(0)$ is unbounded and $\mathsf B(z,\Delta)$ is $r_*$-pivotal for $C_\mathcal V(0)$ for some $z\in\mathcal Z$. 
\end{itemize}
\end{lemma}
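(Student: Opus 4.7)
The plan is to extend the proof of Lemma~\ref{l:pivotals} to the random-radii setting, treating each part in turn. For part (a), we fix $r_* > 0$ and $c \in (0, r_*)$ with $\mathsf P[c \leq \varrho \leq r_*] > 0$, and fix $\varepsilon \in (0, c)$. Defining $\mathcal E_s$ and $s_0$ as in the proof of Lemma~\ref{l:pivotals}, we find $a, b \in \mathsf X$ with $d_\mathsf X(a, b) \leq s_0 + \varepsilon$ such that $E_0 = \{C_\mathcal O(a), C_\mathcal O(b) \text{ are unbounded of types } \mathcal A \text{ and } \neg\mathcal A\}$ has positive probability. Since $\omega$ is simple and isometry invariant, a short Fubini argument gives $\omega(\{x\}) = 0$ almost surely for each fixed $x$, so $\mathbf P[\omega(\mathsf B(a, 2\delta)) = 0] \to 1$ as $\delta \to 0$. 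We choose $\delta \in (0, c - \varepsilon)$ small enough that $\mathbf P[E_0 \cap \{\omega(\mathsf B(a, 2\delta)) = 0\}] > 0$; on this event, $\widetilde\omega(\mathsf B(a, 2\delta) \times [0, r_*]) = 0$ holds trivially. The pivotality argument then mimics Lemma~\ref{l:pivotals}(a): inserting $(X, R)$ with $X$ uniform on $\mathsf B(a, \delta)$ and $R \sim \mathsf Q$ conditioned on the positive-probability event $\{R \in [c, r_*]\}$, the inserted ball contains $a$ (since $R \geq c > \delta \geq d_\mathsf X(X, a)$) and lies within distance $\delta + s_0 + \varepsilon - c < s_0$ from $b$. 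Were the types of $C_\mathcal O(a), C_\mathcal O(b)$ preserved for a positive $(\mu_\mathsf X \otimes \mathsf Q)$-measure subset of such $(x, r)$, absolute continuity of $\mathbf P^{\mathsf B(a, \delta)}$ with respect to $\mathbf P$ would produce a positive-probability configuration in $\mathcal E_s$ for some $s < s_0$, contradicting the definition of $s_0$. Isometry invariance, together with the positive probability of $\mathcal Z \cap \mathsf B(x_*, \delta/2) \neq \emptyset$, then yield the claim with $\delta_* = 3\delta/2$.

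For part (b), the new difficulty is that, unlike in the unit-radius case, removing balls centered in a fixed neighborhood of $a$ need not create a vacancy of predictable size, because balls of arbitrarily large radius centered far from $a$ may cover points close to $a$. We circumvent this using \eqref{eq:assumption-iid-vacant}: the number of balls of $\widetilde\omega$ intersecting $\mathsf B(a, R+1)$ is almost surely finite, so their maximum radius $M_a$ is almost surely finite, and likewise for $M_b$. For $R$ sufficiently large, the event $E_1 = E_0 \cap \{M_a \leq R, M_b \leq R\}$ has positive probability. Fixing $\delta' > 0$ small and $r_* = \rho > R + \delta' + \varepsilon$, on $E_1$ any ball centered in $\mathsf B(a, \rho) \subseteq \mathsf B(a, R+1)$ has radius at most $R < r_*$, so $\widetilde\omega(\mathsf B(a, \rho) \times (r_*, \infty)) = 0$; and for any $x \in \mathsf B(a, \delta')$, upon removing all points of $\widetilde\omega$ centered in $\mathsf B(x, \rho)$, the ball $\mathsf B(x, \rho - R)$ is vacant, since every remaining ball either has radius at most $R$ and center farther than $\rho$ from $x$ (so it cannot reach $\mathsf B(x, \rho - R)$), or it does not intersect $\mathsf B(a, R+1) \supseteq \mathsf B(x, \rho - R)$ at all. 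This vacant region contains $a$ and lies within $\delta' + s_0 + \varepsilon - (\rho - R) < s_0$ from $b$. The deletion-tolerance contradiction then parallels part (a): if the types of $C_\mathcal V(a), C_\mathcal V(b)$ were preserved for a positive $\mu_\mathsf X$-measure set of $x \in \mathsf B(a, \delta')$, then absolute continuity of the law of $\widetilde\omega|_{\mathsf B(x, \rho)^c \times \R_+}$ with respect to $\mathbf P$ would produce a positive-probability configuration in $\mathcal E_s$ for some $s < s_0$, contradicting the minimality of $s_0$. Isometry invariance, together with the positive probability of $\mathcal Z \cap \mathsf B(x_*, \delta'/2) \neq \emptyset$, then yield the claim with $\Delta = \rho$.

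The hard part is part (b): producing a predictable vacancy after local deletion despite unbounded radii. The key is to condition on the positive-probability event on which local radii are bounded (via \eqref{eq:assumption-iid-vacant}) and then choose the deletion radius $\rho$ comfortably larger than this bound; once this conditioning is in place, the remainder of the argument closely follows the unit-radius proof of Lemma~\ref{l:pivotals} with only minor modifications.
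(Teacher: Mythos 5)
Your part (a) is correct and buys essentially the same conclusion as the paper by a slightly different route: you force $\widetilde\omega(\mathsf B(a,2\delta)\times\R_+)=0$ by shrinking $\delta$ (using that a fixed point is almost surely not an atom of a locally finite isometry invariant process) and you condition the inserted radius to lie in $[c,r_*]$, whereas the paper instead conditions on all balls centred near $a$ having radius at least some $r_1$ and then \emph{derives} the absence of centres in $\mathsf B(a,\tfrac12 r_1)$ from the minimality of $s_0$. Both versions work; note only that conditioning the inserted mark on the positive-probability event $\{\varrho\in[c,r_*]\}$ still yields a law absolutely continuous with respect to $\mathbf P$, which you implicitly use and should perhaps say.

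Part (b) has a genuine gap at the step where you claim that $E_1=E_0\cap\{M_a\le R,\,M_b\le R\}$ has positive probability for $R$ large. Your $M_a$ is the maximal radius of the balls meeting $\mathsf B(a,R+1)$, a region that itself grows with $R$, so the requirement $M_a\le R$ is self-referential: assumption \eqref{eq:assumption-iid-vacant} gives $M_a=M_a(R)<\infty$ almost surely for each fixed $R$, but it does \emph{not} give $\mathsf P[M_a(R)\le R]\to 1$, and this probability can in fact tend to $0$. For instance, for a Poisson--Boolean model on $\mathbb H^d$ with radius law $\mathsf Q(d\rho)\propto e^{-(d-1)\rho}\rho^{-2}\mathds{1}_{\{\rho\ge 1\}}\,d\rho$ one has $\mathsf E[\mu_\mathsf X(\mathsf B(0,\varrho))]<\infty$, so \eqref{eq:assumption-iid-vacant} holds, yet the expected number of balls of radius exceeding $R$ that meet $\mathsf B(a,R+1)$ is of order $e^{(d-1)R}/R\to\infty$, whence $\mathsf P[M_a(R)\le R]=\exp\bigl(-\Theta(e^{(d-1)R}/R)\bigr)\to 0$. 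Since your dichotomy (``every remaining ball either meets $\mathsf B(a,R+1)$ and then has radius at most $R$, or misses it'') rests entirely on this event, the construction of the guaranteed vacant region $\mathsf B(x,\rho-R)$ collapses. The repair is the one the paper uses: bound by some large $r_2$ the radii of the (almost surely finitely many, by \eqref{eq:assumption-iid-vacant}) balls meeting a \emph{fixed} bounded neighbourhood of $a$, say $\mathsf B(a,2)$ --- an event whose probability does tend to $1$ as $r_2\to\infty$ --- take the deletion radius $\Delta=r_2+1$, and take the guaranteed vacant region of \emph{fixed} size, e.g.\ $\mathsf B(x,1)\subseteq\mathsf B(a,2)$; then a remaining ball either meets $\mathsf B(a,2)$, hence has radius at most $r_2$ and is centred too far from $x$ to reach $\mathsf B(x,1)$, or misses $\mathsf B(a,2)\supseteq\mathsf B(x,1)$ altogether. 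With that change the rest of your argument for (b) goes through.
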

\begin{proof} 
The proof of Lemma~\ref{l:pivotals:iid} is similar to the proof of Lemma~\ref{l:pivotals} and we only explain necessary modifications. We begin with part (a). 
Let $s_0$ be as in the proof of Lemma~\ref{l:pivotals}(a). 
Then there exist $a,b\in\mathsf X$ and $r_1\in(0,1)$ such that with positive probability, $a$ and $b$ belong to unbounded occupied components of opposite types, $d_\mathsf X(a,b)\leq s_0+\frac14 r_1$, and all the balls centered in $\mathsf B(a,s_0+1)$ have radius at least $r_1$. 
Let $B=\mathsf B(a,\frac12 r_1)$. As in the proof of Lemma~\ref{l:pivotals}(a), we conclude that on the above event, there are no balls centered in $B$ and for $\mu_\mathsf X$-almost all $x\in B$, adding a ball centered at $x$ with radius bigger than $r_1$ to the occupied set would change the type of one of the components $C_\mathcal O(a)$ or $C_\mathcal O(b)$ to the opposite. The existence of a required $r_*$-pivotal ball $\mathsf B(z,\delta)$ for $C_\mathcal O(0)$  now follows just as in the proof of Lemma~\ref{l:pivotals}(a), where we take $\delta=\frac18r_1$ and arbitrary $r_*$ that satisfies $\mathsf P[r_1<\varrho<r_*]>0$. 

As for part (b), let $s_0$ be as in the proof of Lemma~\ref{l:pivotals}(b). 
Then there exist $a,b\in\mathsf X$ and $r_2$ such that with positive probability, $a$ and $b$ belong to unbounded vacant components of opposite types, $d_\mathsf X(a,b)\leq s_0+\frac14$, and all the balls intersecting $\mathsf B(a,\frac12)$ have radius at most $r_2$ (the existence of such $r_2$ follows from the assumption \eqref{eq:assumption-iid-vacant}). 
As in the proof of Lemma~\ref{l:pivotals}(b), we conclude that for $\mu_\mathsf X$-almost every $x\in \mathsf B(a,\frac12)$, deleting all the balls centered in $\mathsf B(x,\Delta)$, where $\Delta=r_2+1$, would change the type of one of the components $C_\mathcal V(a)$ or $C_\mathcal V(b)$ to the opposite. Finally, we choose $r_*$ such that, in addition to the above, with positive probability, all the balls centered in $\mathsf B(a,\frac12+\Delta)$ have radius at most $r_*$. 
The existence of a required $r_*$-pivotal ball $\mathsf B(z,\Delta)$ for $C_\mathcal V(0)$  now follows just as in the proof of Lemma~\ref{l:pivotals}(b).
\end{proof}

\begin{proof}[Proof of Theorem~\ref{thm:main-iid}]
The proofs of Theorems~\ref{thm:monotonicity-of-uniqueness}--\ref{thm:rw-transience-intro} work the same for Boolean models with i.i.d.\ random radii, so we only need to discuss necessary adjustments to the proof of Theorem~\ref{thm:Indistinguishability}. 
We begin with part (a) of Theorem~\ref{thm:Indistinguishability}. 
In the definition of the event $\mathcal E_0$, one should replace the pivotal ball $\mathsf B(z,\frac18)$ by a $r_*$-pivotal ball $\mathsf B(z,\delta)$, with $\delta$ and $r_*$ as in Lemma~\ref{l:pivotals:iid}(a). 
In the definition of the event $\mathcal E_1$, the ball $\mathsf B(z,\frac98)$ should be replaced by $\mathsf B(z,\delta+r_*)$. 
The set $\mathsf S(z;y)$ in the definition of the event $\mathcal U(z;y)$ should be replaced by the set of all $(x,r)\in\mathsf B(z,\delta)\times[0,r_*]$ such that the connected components of $y$ in $\mathcal O(\widetilde \omega|_{(\mathsf B(z,\delta)\times[0,r_*])^c})$ and $\mathcal O(\widetilde \omega|_{(\mathsf B(z,\delta)\times[0,r_*])^c}+\delta_{(x,r)})$ have different types. 
Note that the so defined set $\mathsf S(z;y)$ is $\sigma(\widetilde \omega|_{(\mathsf B(z,\delta)\times[0,r_*])^c})$-measurable and if $\mathsf B(z,\delta)$ is $r_*$-pivotal for $C_\mathcal O(y)$, then $(\mu_\mathsf X\otimes\mathsf Q)(\mathsf S(z;y))>0$ a.s.\ 
In particular, if $\mathcal U(z;y)$ is the event that $\widetilde\omega(\mathsf S(z;y)) = \widetilde\omega\big(\mathsf B(z,\delta)\times[0,r_*]\big) = 1$, then by Lemma~\ref{l:conditional-probability-positive-iid}(a), there exists $\sigma>0$ such that 
with positive probability, the event $\mathcal E_1$ occurs and the point $z$ in the definition of $\mathcal E_1$ satisfies additionally that 
$\mathsf P\big[\mathcal U(z;y)\,\big|\,\widetilde\omega|_{(\mathsf B(z,\delta)\times[0,r_*])^c}\big]\geq \sigma$.
Modulo these adjustments, the proof of Theorem~\ref{thm:Indistinguishability}(a) for general Boolean models is the same as its proof in the case of constant radii and we omit the details.  

\smallskip

We now discuss necessary adjustments to the proof of Theorem~\ref{thm:Indistinguishability}(b). 
In the definition of the event $\mathcal E_0$, one should replace the pivotal ball $\mathsf B(z,2)$ by a $r_*$-pivotal ball $\mathsf B(z,\Delta)$, with $\Delta$ and $r_*$ as in Lemma~\ref{l:pivotals:iid}(b).
In the definition of $\mathcal E_1$, the ball $\mathsf B(z,3)$ should be replaced by $\mathsf B(z,\Delta+r_*)$. 
By Lemma~\ref{l:conditional-probability-positive-iid}(b), there exists $\sigma>0$ such that 
with positive probability, 
$\mathcal E_1$ occurs and the point $z$ in the definition of $\mathcal E_1$ satisfies additionally that 
$\mathsf P\big[\widetilde\omega(\mathsf B(z,\Delta)\times[0,r_*])=0\,\big|\,\widetilde\omega|_{(\mathsf B(z,\Delta)\times[0,r_*])^c}\big]\geq \sigma$.
Modulo these adjustments, the proof of Theorem~\ref{thm:Indistinguishability}(b) for general Boolean models is the same as its proof in the case of constant radii and we omit the details.  
\end{proof}

\section*{Acknowledgements}
The research of both authors has been supported by the DFG Priority Program 2265 ``Random Geometric Systems'' (Project number 443849139).

\end{document}